\theoremstyle{definition}
\newtheorem{theo}{Theorem}[section]
\newtheorem{lem}[theo]{Lemma}
\newtheorem{cor}[theo]{Corollary}
\newtheorem{rem}[theo]{Remark}
\numberwithin{equation}{section}
\newcommand{\R}{\mathbb{R}}
\newcommand{\E}{\mathbb{E}}
\newcommand{\N}{\mathbb{N}}
\newcommand{\PR}{\mathbb{P}}
\begin{document}

{
  \title{ TWIN: Two window inspection for online\\ change point detection}
  \author{Patrick Bastian\hspace{.2cm} \\
    Department of Mathematics, Aarhus University\\
    and \\
    Tim Kutta  \thanks{Corresponding author, mail adress: tim.kutta@math.au.dk} \\
    Department of Mathematics, Aarhus University}
  \maketitle
}

\begin{abstract}
\noindent We propose a new class of sequential change point tests, both for changes in the mean parameter and in the overall distribution function. The methodology builds on a two-window inspection scheme (TWIN), which aggregates data into symmetric samples and applies strong weighting to enhance statistical performance. The detector yields logarithmic rather than polynomial detection delays, representing a substantial reduction compared to state-of-the-art alternatives. Delays remain short, even for late changes, where existing methods perform worst. Moreover, the new procedure also attains higher power than current methods across broad classes of local alternatives.
For mean changes, we further introduce a self-normalized version of the detector that automatically cancels out temporal dependence, eliminating the need to estimate nuisance parameters. The advantages of our approach are supported by asymptotic theory, simulations and an application to monitoring COVID19 data. Here, structural breaks associated with new virus variants are detected almost immediately by our new procedures. 
This indicates potential value for the real-time monitoring of future epidemics. 
 Mathematically, our approach is underpinned by new exponential moment bounds for the global modulus of continuity of the partial sum process, which may be of independent interest beyond change point testing.

\end{abstract}

\defaultbibliographystyle{apalike}
\defaultbibliography{tim}
\begin{bibunit}

\section{Introduction}
\textbf{Sequential change point detection} We consider the problem of sequentially detecting a change in the distribution of a real-valued time series \((X_n)_{n \in \mathbb{N}}\). For our exposition, we focus on a change in the mean parameter, even though we later develop a nonparametric method for general distributional changes.
The procedure begins with the analyst obtaining a \textit{training sample} \(\{X_1, \dots, X_N\}\) where the mean is unknown but constant at \(\mu^{(1)} := \mathbb{E}X_n\), \(1 \leq i \leq N\). This sample serves as a benchmark for subsequent change point detection. 
After training, the \textit{monitoring period} starts, during which data arrive sequentially, expanding the sample incrementally to \(\{X_1, \dots, X_{N+1}\}\), \(\{X_1, \dots, X_{N+2}\},\) and so on, theoretically indefinitely. 
The goal is to test sequentially at some (asymptotically) controlled nominal level $\alpha$ the hypotheses
\begin{align} \label{e:H0X}
&H_0: \mu^{(1)} := \mu_1 = \dots = \mu_N = \mu_{N+1} = \mu_{N+2} = \dots,\\ 
\label{e:H1X}
&H_1: \mu^{(1)} := \mu_1 = \dots = \mu_{N+k^\star} \neq \mu_{N+k^\star+1} 
= \mu_{N+k^\star+2} = \dots =: \mu^{(2)},
\end{align}
where under $H_1$ a change occurs at an unknown time \(N+k^\star\). Asymptotics are studied for \(N \to \infty\), and in our local alternatives we allow \(k^\star = k^\star_N\) and \(\mu^{(i)} = \mu^{(i)}_N\) to depend on \(N\), with 
\begin{align} \label{e:jumpsize}
\Delta_N := |\mu^{(1)} - \mu^{(2)}|
\end{align}
denoting the jump size. The first time in the monitoring period when a change point is declared  is denoted by $\hat k$, and under $H_1$ the "detection delay" signifies the difference
\[
 \tilde \tau:= \max(\hat k-k^\star,0).
\]

\textbf{Criteria for successful monitoring} 
A sequential change point test should ideally meet three criteria: 
First, it should asymptotically control the user–specified nominal level under $H_0$. 
Second, under $H_1$, it should be powerful even against local alternatives where the jump size $\Delta_N$ is small. 
Third, when larger changes occur, it should detect them with short delay. 
While the first requirement has rightly received considerable attention in the literature, comparatively less emphasis has been placed on systematically analyzing and optimizing the latter two \citep{aue:kirch:2024}. 
Much of the recent progress has instead focused on extending existing testing procedures to new data structures or problem settings (see also Appendix~\ref{sec:app:lit} for a short overview). 
These contributions have advanced the field in important ways. 
However, on the key performance criteria of power and detection delay, systematic progress over the last two decades has been relatively limited. 
Since these are arguably the aspects that matter most to practitioners, we believe they deserve renewed attention. 
For this reason, we here develop a range of new statistics that drastically cut delays and at the same time enhance power against local alternatives. These advances are proved mathematically and illustrated empirically in simulations and a data analysis. We now provide a brief overview of our main contributions.

\textbf{Main contributions.} The dominant approach to testing the hypotheses pair \eqref{e:H0X}--\eqref{e:H1X} is based on cumulative sum (CUSUM) statistics, as we review in Section~\ref{sec:lit}. In this work, we introduce a new variant, the \emph{two-window inspection} (TWIN) CUSUM, which compares two aggregated samples of approximately equal size: one formed from the earliest data and one from the most recent data. This design allows for much stronger weighting than in existing statistics and conveys a number of benefits.
\begin{enumerate}
    \item \textit{Short detection delays.} The TWIN CUSUM achieves delays that grow only logarithmically in $N+k^\star$, a substantial improvement over the polynomial delays of existing methods. This improvement applies to all changes, but is most pronounced for late changes ($k^\star \gg N$), where standard methods have linear delays $\asymp N+k^\star$, compared to TWIN's logarithmic growth. 
    \item \textit{Enhanced detection power.} Standard approaches require $\Delta_N N^{-1/2} \to \infty$ for consistency. In contrast, TWIN is consistent under the condition $\Delta_N (N+k^\star)^{-1/2} \to \infty$ (up to logarithmic factors), which is close to optimal. In particular for $k^\star \gg N$, even changes of size $\Delta_N \ll N^{-1/2}$ become detectable. 
    \item \textit{Self-normalized extensions.} The TWIN CUSUM can be applied even when model errors are temporally dependent. 
    In this case, to avoid the unstable estimation of a long-run variance, we also propose a self-normalized version of the test statistic that converges to a pivotal limit. 
     \item \textit{Nonparametric extensions.} While the TWIN-CUSUM is formulated for mean changes in the data and under the assumption of subgaussian noise, it can be extended to a fully nonparametric test for changes in the distribution function. This extension is also applicable to heavy tailed data. 
\end{enumerate}

Contributions 1) and 2) are formulated in the language of asymptotic statistics. Their strong impact in finite samples is visible across our below simulations and data example in Section \ref{sec:finite}. 
The theoretical analysis of the TWIN CUSUM is founded on new exponential moment bounds for the modulus of continuity of partial sum processes for subgaussian data (Lemma \ref{lem:finite:moc}) 
and for empirical distribution functions
(Lemma \ref{lem:finite:moc:empirical}).

\textbf{Notations} Results in this work are asymptotic as $N \to \infty$. For two sequences $(a_n)_n$ and $(b_n)_n$ of positive numbers we write $a_N \ll b_N$ if $\lim_{N \to \infty} a_N/b_N =0$. We write $a_N \lesssim b_N$ if $\limsup_{N} a_N/b_N$ is finite and $a_N \asymp b_N$ if both   $a_N \lesssim b_N$ and  $b_N \lesssim a_N$ hold.

\textbf{Structure of this paper} In  the remainder of this section, we provide a brief overview of some related literature. In Section \ref{sec:2} we present our new methodology and analyze its theoretical properties. Simulations and a data example are given in Section \ref{sec:finite}. Proofs and further details are given in the Appendix.

\subsection{Related literature} \label{sec:lit}

Sequential change point detection for the mean parameter of a time series is a classical statistical problem, often traced back to the work of \cite{page:1954} that developed CUSUM statistics for "continuous inspection schemes". Traditional methods were not formulated as statistical tests (controlling the type-I-error) but rather focused on ensuring long average runtimes in the absence of changes, with short expected delays in the presence of changes (see, e.g. \cite{lorden:1971,pollak:1985}). The formulation of \eqref{e:H0X}-\eqref{e:H1X}, using a sequential test description, paired with a training period, was introduced in the seminal work of \cite{chu:stinchcombe:white:1996}. This approach is flexible because it does not require the specification of in-control parameters and methods developed specifically for this setup will be the focus of this paper. Since, even with this restriction, the related literature is vast, the following overview is restricted to some close points of comparison. We provide a few more references on real-time change point detection in Section \ref{sec:app:lit} of the Appendix. 
For a more comprehensive, recent literature review, we refer the reader to \cite{aue:kirch:2024}. 

\textbf{Change point statistics} Sequential tests for the hypotheses pair \eqref{e:H0X}-\eqref{e:H1X} are most typically based on some weighted CUSUM or MOSUM (moving sum) statistic. Following \cite{aue:kirch:2024}, such statistics can be decomposed into a sum-part, denoted here by $\hat \gamma$ and a weight function, denoted by $w$. We briefly list some common choices. Let $S_j:= \sum_{i=1}^j X_i$, $j \in \{1,2,...,k\}$ and $b \in (0,1)$, then, we define 
\begin{align*}
    (1) \quad \textnormal{CUSUM} & \quad \hat \gamma^{C}(k):=\Big|\frac{k}{N}S_N-(S_{N+k}-S_{N})\Big|\\
   (2) \quad \textnormal{Page CUSUM} & \quad \hat \gamma^{PC}(\ell,k):=\Big|\frac{k-\ell}{N}S_N-(S_{N+k}-S_{N+\ell})\Big|\\
   (3) \quad  \textnormal{Full CUSUM} & \quad \hat \gamma^{FC}(\ell,k):=\Big|\frac{k-\ell}{N+\ell}S_{N+\ell}-(S_{N+k}-S_{N+\ell})\Big|\\
   (4) \quad  \textnormal{mMOSUM} & \quad \hat \gamma^{M}(k):=\Big|\frac{k-\lfloor kb \rfloor}{N}S_N-(S_{N+k}-S_{N+\lfloor kb \rfloor})\Big|.
\end{align*}
The above statistics can be calculated at time $k$ in the monitoring period, where the data $X_1,...,X_N,...,X_{N+k}$ are available. The statistic $\hat \gamma^{C}$ was introduced by \cite{horvath:kokoszka:huskova:steinebach:2003} and is geared towards the detection of early changes. The Page CUSUM and later the Full CUSUM were introduced by \cite{fremdt:2015} and \cite{dette:goesmann:2018} respectively, motivated by likelihood ratio considerations.  
The mMOSUM was introduced by \cite{chen:tian:2010} and a detailed analysis is given in \cite{kirch:weber:2018}. mMOSUM stands for modified MOSUM and for an analysis of the more classical MOSUM statistic that we do not discuss here further, we refer to \cite{aue:horvath:kuhn:steinebach:2012}. \\
To combine values of $\hat \gamma$ for different values $k=1,2,...$ during monitoring, one multiplies it with a weight $w$ and then maximizes over $k$ (and $\ell$). The choice of the weight function is essential, because it guarantees convergence of the test statistic under the null hypothesis, and critically determines the quality of change point detection under the alternative.
A popular choice in the literature, introduced by \cite{horvath:kokoszka:huskova:steinebach:2003}, is
\begin{align}\label{e:weight1}
 w^{(1), \eta}(k) & := N^{-1/2}
\Big(\frac{N+k}{N}\Big)^{-1}\Big(\frac{N+k}{k}\Big)^\eta, \quad 0 \leq \eta < 1/2. 
\end{align}
Choices of $\eta$ close to $1/2$ enhance fast detection of early changes, where  $k^\star \ll N$. $w^{(1), \eta}$ has been used in combination with all four above statistics.
A recent version that leads to shorter detection delays, even when $k^\star \asymp N$, has been introduced by \cite{kutta:doernemann:2025} for the Full CUSUM method and it is clear that it can also be combined with the Page CUSUM. It is the class of polynomial Hölder weights, defined as
\begin{align}\label{e:weight2}
 w^{(2), \eta}(\ell, k)  :=N^{1/2}(N+k)^{\eta-1}(k-\ell)^{-\eta} \log^{-1}(C_0+(N+k)/N), \quad 0 \leq \eta < 1/2,
\end{align}
where as before, $\eta$ closer to $1/2$ leads to shorter detection delays. $C_0>1$ is a constant that impacts finite sample performance and is discussed further in our experiments. \\
We provide a brief performance summary for the above statistics. Equipped with the weight function $w^{(1), \eta}$ statistics (1)-(4) deliver consistent tests, whenever $\Delta \gg N^{-1/2}$ (e.g., Theorem 2 in \cite{aue:kirch:2024}). (3)-(4) may be equipped with $w^{(2), \eta}$ and the consistency condition is the same if $k^\star \lesssim N$ and changes by logarithmic factors if $k^\star \gg N$ (Theorem 3.4 in \cite{kutta:doernemann:2025}). If the weight function $w^{(1), \eta}$ is used and $N \lesssim k^\star$, one can show that detection delays are of order $k^\star/(\Delta N^{1/2})$. If $k^\star$ is very small ($N^\zeta$ for some small enough $\zeta=\zeta(\eta)<1$) short delays of order $N^{\sqrt{\zeta}}/\Delta$ can be attained (Theorem 1.1, \cite{aue:horvath:2004}). If (3)-(4) with $w^{(2), \eta}$ are used, delays are of the shorter order $N^{\sqrt{\zeta}}/\Delta$ as long as $k^\star \lesssim N$ (Lemma 3.5, \cite{kutta:doernemann:2025}). Afterwards, neglecting logarithmic factors, delays are of order $(k^\star/N)(N^{\sqrt{\zeta}} /\Delta)$. In all  above combinations, delays are of polynomial order, later delays even scale linearly in $k^\star$ and 
consistency of the tests requires alternatives of size $\gg N^{-1/2}$.

\textbf{Theoretical benchmark} In order to determine whether the above performance  can be improved in terms of power and detection delay, the work of \cite{yu:padilla:wang:rinaldo:2023} is instructive. This work has a somewhat different focus from the rest, because of three reasons: First, it does not include a training period and so the statistical setup is not completely comparable. Second, the change point detector from said work is not practically feasible, because it requires the user to know the Orlicz norm of the subgaussian model errors. Third, performance guarantees in \cite{yu:padilla:wang:rinaldo:2023} are formulated as finite sample bounds, and the nominal level is  asymptotically not approximated. The last point leads to substantial power loss against small changes in finite samples. Nevertheless, supposing that somehow the Orlicz norm of the model errors were known, \cite{yu:padilla:wang:rinaldo:2023} becomes in principle applicable to the testing problem \ref{e:H0X}-\eqref{e:H1X}. In this case, its theoretical performance guarantees are much better than those for existing benchmarks. To be precise, its detection delays are only logarithmic in $N+k^\star$; i.e. delays are short and remain short even when $k^\star$ becomes large. Moreover, understood in this way, consistency requires only (up to logarithmic factors) $\Delta  \gg (N+k^\star)^{-1/2}$, which entails higher power against late changes. The statistic used by \cite{yu:padilla:wang:rinaldo:2023} is basically a repeated use of the traditional retrospective change point test, with level adjustments that are possible due to subgaussianity. In our work, we will strive for similar performance guarantees as those in \cite{yu:padilla:wang:rinaldo:2023}. Our methods are fully feasible, approximate a desired nominal level and have extensions for dependent data and for non-parametric change point tests (neither is covered by \cite{yu:padilla:wang:rinaldo:2023}). Mathematically, there also exist big differences: Our new method relies on state-of-the-art tools from Gaussian process theory for convergence, rather than finite sample bounds for subgaussian sums. Finally, there also exist differences in numerical performance. In our experiments (see Section \ref{sec:finite}), we see that our new feasible methods outperform the oracle procedure by \cite{yu:padilla:wang:rinaldo:2023} simultaneously in terms of detection delay and, by a very wide margin, in terms of power.

\section{Statistical Methodology} \label{sec:2}

In this section we present our new methodology and our main theoretical results. In Section \ref{sec:21} we introduce the TWIN detector for changes in the mean parameter. We formulate  performance guarantees under the assumption of i.i.d. subgaussian noise.  Section  \ref{sec:22} extends this method to a nonparametric test that detects changes in the entire distribution function. 
Finally, and on a more practical note, we derive extensions to dependent noise sequences in Section \ref{sec:23}. Here, we also present a self-normalized version of the TWIN detector, that cancels out the long-run variance.

\subsection{The TWIN CUSUM for mean changes}\label{sec:21}
Recall the change point model from the introduction, as well as the hypotheses pair \eqref{e:H0X}-\eqref{e:H1X}. In order to scan for changes in the mean parameter of $X_i$, we define the following CUSUM statistic, where as before $S_j= \sum_{i=1}^j X_i$:
\[
    \quad  \textnormal{TWIN CUSUM}  \quad \hat \gamma^{TC}(\ell,k)=\Big|\min(1, \ell/N) \cdot S_{\max(\ell,N)}-(S_{N+k}-S_{N+k-\ell})\Big|.
\]
As mentioned before, TWIN stands for two window inspection. The acronym refers to the (almost) symmetrical structure of the test statistic, where (for $\ell\ge N$) two sums of equal lengths are compared; one consisting of the first $\ell$ observations and one consisting of the last $\ell$ observations. On the first glance, the TWIN CUSUM looks similar to the Page CUSUM or the Full CUSUM, discussed in Section \ref{sec:lit}. However, it has an important advantage on a technical level: Since the two compared sums are of the same lengths, they allow for better scalings. Recall that, in contrast, for the Full CUSUM, a scaling has to control two sums of potentially very different lengths. We now introduce the TWIN weight function for $\beta \in (1/2,\infty)$ and $C_0>1$ as
\[
 w^{TC, \beta}(\ell, k)  :=\ell^{-1/2} \log^{-\beta}(C_0+N/\ell)\log^{-\beta}(C_0+(N+k)/N).
\]
The precise role of the two parameters $\beta, C_0$ is discussed in Remark \ref{rem:main}.
On a high level, the TWIN weight has two components: First, the factor $\ell^{-1/2} \log^{-\beta}(C_0+N/\ell)$, puts a lot of weight on sums where $\ell$ is small (larger sums are only preferred by a logarithmic factor). Such detectors, where small scales are weighted almost as strongly as large scales, are typical for multiscale statistics in retrospective change point detection, and we refer to \cite{koehne:mies:2025} for a recent reference. In sequential change point detection,  this scaling leads to very short detection delays. Second, the factor $\log^{-\beta}(C_0+(N+k)/N)$ expresses temporal discounting, which is needed for weak convergence under $H_0$. In contrast to most weight functions such as $w^{(1), \eta}, w^{(2), \eta}$ (see Section \ref{sec:lit}) this discounting is very weak, leading to higher sensitivity to later changes. Logarithmic discounting has been used in some classical works before, e.g. \cite{leisch:hornik:kuan:2000}, but only in conjunction with fixed window width (proportional to $N$), which does not convey the benefits of short delays, associated with multiscale approaches. 
We also mention that the TWIN weight function cannot be used in conjunction with the Page CUSUM or the Full CUSUM, where it does not lead to weak convergence under $H_0$. It requires the specifically tailored TWIN CUSUM for convergence. 
We now define the TWIN detector at time $k$ in the monitoring period as 
\begin{align} \label{e:TC}
\widehat{\Gamma}^{TC}(k):= \max_{1 \leq \ell \leq (N+k)/2, \,\ell \le k}
 \big[w^{TC, \beta}(\ell, k)\cdot \hat \gamma^{TC}(\ell,k)\big].
\end{align}
\textbf{Theoretical analysis} We begin our analysis of the TWIN detector, by imposing some model assumptions. We therefore define the model errors $\varepsilon_i:=X_i - \mathbb{E}X_i$
and we will below assume that the sequence of $(\varepsilon_i)_i$ is i.i.d. and subgaussian, in the sense that for some $C_\varepsilon>0$
\begin{align} \label{e:SG}
     \|\varepsilon_1\|_{\Psi_2}  \le C_\varepsilon, \quad \textnormal{where} \quad   \|X\|_{\Psi_2}:=\inf\{c : \E\exp(X^2/c^2)\leq 2\}.
\end{align}
Next, we define the partial sum process of the model errors as follows
\begin{align}
\label{eq:def:PN}
    P_N(t)=N^{-1/2}\bigg( \sum_{i=1}^{\lfloor Nt\rfloor}\varepsilon_i+(Nt-\lfloor Nt\rfloor)\varepsilon_{\lfloor Nt\rfloor+1}\bigg),
\quad t \ge 0.
\end{align}

We can now state our main technical lemma, which is a concentration result for the modulus of continuity of $P_N$.

\begin{lem}
\label{lem:finite:moc}
    Suppose that the model errors $(\varepsilon_i)_i$ are i.i.d. and subgaussian. Let $\delta>0$ and define the random variable
    \begin{equation*}
    M_{P_N} := \sup_{\substack{0< s<t<\infty, \\ h=|t-s|}} \frac{|P_N(t)-P_N(s)|}{\sqrt{h \left(1+ \log(t/h) + \delta |\log (t)| \right)}}.
    \end{equation*}
    Then, there exists a $C_P>0$, only depending on the constant $C_\varepsilon$ in \eqref{e:SG}, but independent of $N$, such that
    \[
    \mathbb{E}\exp(M_{P_N}^{2}) \le C_P.
    \]   
\end{lem}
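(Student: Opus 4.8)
The plan is to control the modulus of continuity of $P_N$ by a chaining argument, exploiting the subgaussianity of the increments together with the specific form of the normalization $\sqrt{h(1+\log(t/h)+\delta|\log t|)}$ in the denominator. First I would reduce the supremum over all $0<s<t<\infty$ to a more tractable problem. The key structural observation is that $P_N$ is piecewise linear with breakpoints at the grid $\{i/N\}$, so for increments shorter than $1/N$ the bound follows from a deterministic estimate on a single error $\varepsilon_{\lfloor Nt\rfloor+1}$ (whose subgaussian norm is bounded), and the real content is the behaviour of the partial sums $S_j$ on the grid. I would therefore write $t/h$ and the dyadic decomposition of the time axis into blocks $[2^j, 2^{j+1})$ for $j\in\mathbb Z$; on each block the factor $1+\delta|\log t|$ is essentially constant ($\asymp 1+|j|$), and within the block one performs a standard dyadic chaining in the increment size $h$. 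The weight $\sqrt{h(1+\log(t/h))}$ is exactly the Lévy-type modulus for Brownian motion, so the subgaussian increments $|S_b - S_a| = |\sum_{i=a+1}^b \varepsilon_i|$, which satisfy $\|S_b-S_a\|_{\Psi_2}\lesssim \sqrt{b-a}$, fit this scale.

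The core estimate is the following: for a fixed dyadic scale $h = 2^{-m}$ and a fixed block $[2^j,2^{j+1})$, the maximum of $|P_N(t)-P_N(s)|$ over the $O(2^{j+m})$ relevant pairs $(s,t)$ at distance $\asymp h$ is, by a union bound over subgaussian variables, of order $\sqrt{h}\cdot\sqrt{\log(\#\text{pairs})} \lesssim \sqrt{h(j+m)} = \sqrt{h}\sqrt{\log(t/h) + |\log t|}$, with exponentially small tails beyond that. Summing the chaining links across scales $m$ (a geometric-type series in $\sqrt{2^{-m}}$ converges) and then across blocks $j$ controls the contribution of each block; the term $\delta|\log t| \asymp \delta|j|$ in the denominator provides the summability in $j$ (the number of blocks grows, but each contributes a tail that decays like $\exp(-c\delta|j|\,\lambda^2)$ after normalization, so $\sum_j$ of these is finite and, crucially, the bound on $\mathbb E\exp(M_{P_N}^2)$ is uniform in $N$). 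The role of $\delta>0$ is precisely to make the sup over the unbounded time horizon a.s. finite and to give the exponential-moment bound; without it the supremum would diverge as $t\to\infty$.

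Concretely I would proceed in these steps: (i) discretize — bound $M_{P_N}$ up to a universal constant by its analogue $\tilde M_N$ over pairs $s,t$ on the grid $\frac1N\mathbb Z_{\ge 0}$, handling sub-grid increments separately via a crude bound that already has a finite exponential moment; (ii) for the grid problem, organize times into dyadic blocks $B_j$ and increment sizes into dyadic scales, and for each (block, scale) pair invoke the maximal inequality for subgaussian sums, $\mathbb P(\max |S_b-S_a| > \lambda\sqrt{(b-a)}) \le 2\exp(-c\lambda^2)$ together with a union bound; (iii) perform the chain-and-sum across scales within a block to get, for each $j$, a random variable $Z_j$ with $Z_j \le M_{P_N}$-contribution normalized so that $\mathbb P(Z_j > \lambda) \le C\exp(-c(1+|j|)\lambda^2)$; (iv) combine via $M_{P_N}^2 \lesssim \sup_j Z_j^2$ and $\mathbb E\exp(\sup_j Z_j^2) \le \sum_j \mathbb E\exp(Z_j^2) < \infty$ using the geometric decay in $j$, after first passing to a slightly smaller exponent to absorb constants — i.e. show $\mathbb E\exp(\kappa M_{P_N}^2)<\infty$ for a small $\kappa$ and then note the statement as written can be arranged by rescaling the denominator constant into $C_\varepsilon$, or simply track constants so that exponent $1$ works.

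The main obstacle I anticipate is bookkeeping the three simultaneous layers of summation — over increment scales, over dyadic time-blocks, and implicitly over $N$ — so that all the constants that appear are genuinely independent of $N$ and so that the weights $\log(t/h)$ and $\delta|\log t|$ are each used exactly where needed (the former to pay for the union bound over pairs at a given scale inside a block, the latter to pay for the union over blocks). A secondary technical point is the non-stationarity introduced by the linear interpolation and by the fact that for $t<1/N$ the process is a single scaled error rather than a sum; this boundary regime must be checked to contribute only a bounded exponential moment and not interfere with the chaining. Standard references for the subgaussian maximal inequality and the Dudley-type chaining (e.g. the partial-sum invariance-principle literature) would be cited to keep the argument concise.
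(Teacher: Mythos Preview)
Your approach via direct dyadic chaining is viable but genuinely different from the paper's. The paper does not discretize or chain explicitly; instead it invokes a Garsia--Rodemich--Rumsey type inequality (Lemma~1 of Chevallier, 2023) with $\Psi(x)=e^{x^2/2}-1$ and $\mu(x)=K\sqrt{cx}$. The only probabilistic input is the uniform subgaussian increment bound $\|P_N(t)-P_N(s)\|_{\Psi_2}\le K\sqrt{|t-s|}$ (the same fact that drives your chaining). The unbounded time horizon is handled by weighting the GRR double integral by a factor $f_\delta(T)\asymp T^{-2(1\pm\delta)}$ and showing that the resulting random variable $\xi=\sup_{T\in\mathbb N}\max(\xi_T,\xi_{1/T})$ has finite $p$-th moments; the inequality $M_{P_N}^2\lesssim \log(4\xi+1)+\text{const}$ then drops out of GRR in one line. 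This route sidesteps exactly the bookkeeping you flag as the main obstacle (three nested summations, tracking $N$-uniformity across blocks and scales). Your chaining is more hands-on and arguably more transparent about where each logarithmic factor is spent, but it is heavier to execute.

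There is one concrete slip in your step~(iv): the bound $\mathbb{E}\exp(\sup_j Z_j^2)\le \sum_j \mathbb{E}\exp(Z_j^2)$ is correct but vacuous, because each summand is at least~$1$ and $j$ ranges over~$\mathbb{Z}$, so the right-hand side is $+\infty$ no matter how sharp the tail decay of the individual $Z_j$. The ``geometric decay in $j$'' you invoke holds for tail probabilities, not for the exponential moments themselves. The fix is to stay at the tail level: from $\mathbb{P}(Z_j>\lambda)\le C\exp(-c(1+\delta|j|)\lambda^2)$ one gets, for $\lambda$ bounded away from zero, $\mathbb{P}(\sup_j Z_j>\lambda)\le C\exp(-c\lambda^2)\sum_{j\in\mathbb Z}\exp(-c\delta|j|\lambda^2)\lesssim \exp(-c'\lambda^2)$, and this subgaussian tail for $\sup_j Z_j$ is what yields $\mathbb{E}\exp(\kappa M_{P_N}^2)<\infty$ for small $\kappa$. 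This is a routine repair, but as written the step fails.
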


Lemma \ref{lem:finite:moc} is an important extension of existing continuity and concentration results for partial sum processes. In contrast to most previous results, it demonstrates square exponential moments for the global modulus of continuity on a non-compact index set. The result has the same form as the finite (Lévy) modulus of continuity, except for a small discounting term $\delta |\log t|$, and related results have only recently become available for the Brownian motion \citep{chevallier2023}.
Lemma \ref{lem:finite:moc} is the central building block to show convergence of the TWIN detector $\sup_{k \ge 1}\widehat{\Gamma}^{TC}(k)$ under the null hypothesis. A discussion of our novel proof strategy can be found in Remark \ref{rem:main} below. 
\begin{theo}\label{thm:main:1}
    Suppose that the model errors $(\varepsilon_i)_i$ are i.i.d. and subgaussian in the sense of \eqref{e:SG}. Moreover, denote by $\{B(x):x\ge 0\}$ a centered Brownian motion with variance parameter $\sigma^2:=\mathbb{E}\varepsilon_1^2$.  Then, under the null hypothesis \eqref{e:H0X}, it holds that
    \begin{align*}
       \sup_{k \ge 1}\widehat{\Gamma}^{TC}(k) \overset{d}{\to} \sup_{t>1}\sup_{\substack{0 \leq s \leq t/2\\ t-s \geq 1 }}\frac{|\min(1,s)B(\max(1,s))-(B(t)-B(t-s))|}{\sqrt{s}\log^{\beta}(C_0+1/s)\log^{\beta}(C_0+t)}=:L(\sigma^2).
    \end{align*}
\end{theo}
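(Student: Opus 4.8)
The plan is to rewrite the detector, under $H_0$, as a continuous functional of the partial sum process $P_N$ evaluated on a dense lattice, then to localize to a compact index region using Lemma~\ref{lem:finite:moc}, and finally to pass to the limit via an invariance principle and the continuous mapping theorem. \textbf{Step 1 (reduction to $P_N$).} Under \eqref{e:H0X} we have $S_j = j\mu^{(1)} + N^{1/2}P_N(j/N)$, and since $\min(1,\ell/N)\max(\ell,N) = \ell$ for every integer $\ell$, the common mean cancels in $\hat\gamma^{TC}(\ell,k)$. Writing $s=\ell/N$ and $t=(N+k)/N$, a direct computation gives $w^{TC,\beta}(\ell,k)\,\hat\gamma^{TC}(\ell,k) = G(s,t;P_N)$, where for a function $f$ with $f(0)=0$
\[
G(s,t;f) := \frac{|\min(1,s)\,f(\max(1,s)) - (f(t)-f(t-s))|}{\sqrt{s}\,\log^\beta(C_0+1/s)\,\log^\beta(C_0+t)}.
\]
Hence $\sup_{k\ge1}\widehat\Gamma^{TC}(k) = \sup_{(s,t)\in D_N}G(s,t;P_N)$, where $D_N$ is the lattice $\{(\ell/N,(N+k)/N): \ell,k\ge1,\ \ell\le(N+k)/2,\ \ell\le k\}$, which fills out the region $D := \{(s,t): t>1,\ 0<s\le t/2,\ t-s\ge1\}$; the target is $\sup_{(s,t)\in D}G(s,t;B)=L(\sigma^2)$.

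\textbf{Step 2 (localization).} The index set $D$ is non-compact both as $s\downarrow0$ and as $t\to\infty$, and the TWIN weight only discounts logarithmically, so tail control is delicate — this is the heart of the proof. Bounding $|f(t)-f(t-s)|$ and $|\min(1,s)f(\max(1,s))|$ by $M_{P_N}$ times the square roots appearing in Lemma~\ref{lem:finite:moc}, together with $|P_N(1)|=O_{\PR}(1)$, one checks that $\sqrt{\log(1/s)}\,\log^{-\beta}(C_0+1/s)$ stays bounded as $s\downarrow0$ and that $\sqrt{\log t}\,\log^{-\beta}(C_0+t)\to0$ as $t\to\infty$, both because $\beta>1/2$. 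This yields: for every $\epsilon>0$ there is a compact $K_\epsilon\subset D$ with $s$ bounded away from $0$ on $K_\epsilon$ such that $\PR\big(\sup_{(s,t)\in D\setminus K_\epsilon}G(s,t;P_N)>\epsilon\big)<\epsilon$ uniformly in $N$. By the Skorokhod representation and Fatou's lemma, the uniform bound $\E\exp(M_{P_N}^2)\le C_P$ transfers to the Brownian limit, so the analogous tail estimate holds for $G(\cdot,\cdot;B)$; in particular $L(\sigma^2)<\infty$ a.s. and is well defined.

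\textbf{Step 3 (limit on compacta and conclusion).} On $K_\epsilon$ the map $f\mapsto\sup_{K_\epsilon}G(\cdot,\cdot;f)$ is continuous for uniform convergence on compacts, since the only singularity of $G$, at $s=0$, has been removed. Donsker's invariance principle gives $P_N\cond B$ in $C[0,\infty)$, so the continuous mapping theorem yields $\sup_{K_\epsilon}G(\cdot,\cdot;P_N)\cond\sup_{K_\epsilon}G(\cdot,\cdot;B)$. Separately, the equicontinuity of $(P_N)$ supplied by Lemma~\ref{lem:finite:moc} and the uniform continuity of $w^{TC,\beta}$ on $K_\epsilon$ show that the lattice supremum over $D_N\cap K_\epsilon$ and the continuous supremum over $K_\epsilon$ differ by $o_{\PR}(1)$. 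Combining Steps~2 and~3 by a standard three-$\epsilon$ argument, and letting $K_\epsilon\uparrow D$, gives $\sup_{k\ge1}\widehat\Gamma^{TC}(k)\cond\sup_{(s,t)\in D}G(s,t;B)=L(\sigma^2)$.

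\textbf{Main obstacle.} The crux is Step~2: making the weak, logarithmic discounting of the TWIN weight suffice to tame the doubly non-compact index set uniformly in $N$. This is exactly what Lemma~\ref{lem:finite:moc} delivers, since its global modulus bound is of Lévy form $\sqrt{h(1+\log(t/h))}$ up to the harmless factor $\delta|\log t|$ — precisely the scale against which the weight $\sqrt{s}\,\log^\beta(C_0+1/s)\,\log^\beta(C_0+t)$ must, and does, win when $\beta>1/2$. The remaining ingredients — the mean cancellation, the lattice approximation, and the continuous mapping step — are routine given this input.
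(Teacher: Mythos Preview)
Your argument is correct and follows a genuinely different route from the paper's. Both proofs depend critically on Lemma~\ref{lem:finite:moc}, but they deploy it differently. You localize in both tails---showing via the modulus bound that $\sup_{s<\delta}G$ and $\sup_{t>T}G$ are uniformly small in $N$---and then apply Donsker's weak invariance principle plus the continuous mapping theorem on the compact remainder $K_\epsilon$. The paper instead couples $S$ to a Brownian motion $W$ via KMT strong approximation (a.s.\ error $\lesssim\log j$) and shows directly that the detector built from $S$ and the one built from $W$ differ by $o_{\PR}(1)$; this forces a case split on $\ell$, and for $\ell\le N^{1/2}$ (where $\log k$ would swamp $\sqrt\ell$) the modulus lemma is invoked for $P_N$ and $W$ separately to bound the increment \emph{difference}, rather than to argue negligibility of the region as you do. The paper's large-$t$ cutoff (Lemma~\ref{lem:cutoff}) and its lattice-to-continuous transition (Lemma~\ref{lem:cont:discrete}) correspond to pieces of your Steps~2 and~3. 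Your approach is more elementary---only weak convergence is needed, no a.s.\ rates---and packages the proof in a cleaner localize-then-CMT scheme. The paper's coupling, on the other hand, pays off downstream: the same $W$ is reused to obtain joint convergence with the self-normalizer $V_N$ in Theorem~\ref{thm:main:3}, and the whole argument ports to dependent data (Theorem~\ref{thm:main:1:dep}) by substituting a polynomial-rate strong approximation. One minor remark: your Skorokhod--Fatou transfer of the modulus bound to $B$ is unnecessarily indirect; the proof of Lemma~\ref{lem:finite:moc} (via Lemma~\ref{lem:finite:moc:proofs}) applies verbatim to any process with $\sqrt{|t-s|}$-subgaussian increments, hence to $B$ directly.
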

Based on Theorem \ref{thm:main:1}, we now propose the following detection scheme: If $q_{1-\alpha}$ is the $(1-\alpha)$-quantile of the limiting distribution $L=L(\sigma^2)$, a change point is declared at time $ k$ if 
\begin{align} \label{e:decision}
\widehat{\Gamma}^{TC}(k)>q_{1-\alpha}.
\end{align}
The corresponding time of first detection is then denoted by $\hat k$. We can now study the power and delay of the corresponding procedure. 
\begin{cor}
\label{cor:power:delay}
    Suppose that the model errors $(\varepsilon_i)_i$ are i.i.d. and subgaussian in the sense of \eqref{e:SG}. Moreover suppose that the alternative \eqref{e:H1X} holds, with a change occurring at time $k_N^\star$, and change magnitude  of size $\Delta_N$ (see \eqref{e:jumpsize}). Then, if 
    \begin{align} \label{e:conscond}
    \Delta_N \gg \frac{\log^\beta(C_0+N/k_N^\star)\log^\beta(C_0+k_N^\star/N)}{\sqrt{N+k^\star_N}} 
    \end{align}
    it follows that
    \[
    \mathbb{P}\big(\widehat{\Gamma}^{TC}(k)>q_{1-\alpha}\big)\to 1.
    \]
    If condition \eqref{e:conscond} holds the delay time satisfies
    \[
    \max(0, \hat k-k_N^\star)= O_P \bigg( \frac{\log^{2\beta}(N)\log^{2\beta}(C_0+k^\star_N/N)}{\Delta_N^2}\bigg).
    \]
\end{cor}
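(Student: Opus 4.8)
The plan is to split the TWIN CUSUM at each admissible pair $(\ell,k)$ (that is, $1\le\ell\le(N+k)/2$, $\ell\le k$) into a deterministic signal carrying the mean change plus a mean-zero stochastic term that is unchanged by the presence of a break, make the signal large through a tailored choice of the two windows, and control the stochastic term uniformly via the null analysis. Writing $\varepsilon_i:=X_i-\E X_i$, $a:=N+k^\star_N$, letting $R(\ell,k)$ denote the argument of the modulus in $\hat\gamma^{TC}(\ell,k)$, and putting $W(\ell,k):=\min(1,\ell/N)\sum_{i\le\max(\ell,N)}\varepsilon_i-\sum_{i=N+k-\ell+1}^{N+k}\varepsilon_i$, one has $\hat\gamma^{TC}(\ell,k)=|\E R(\ell,k)+W(\ell,k)|$. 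Under $H_0$ the drift cancels, $\E R\equiv 0$, so $\sup_{k\ge1}\widehat\Gamma^{TC}(k)$ there equals $\mathcal M_N:=\sup_{(\ell,k)\,\mathrm{adm.}}w^{TC,\beta}(\ell,k)|W(\ell,k)|$; since $(\varepsilon_i)_i$ has the same law under $H_0$ and $H_1$, Theorem~\ref{thm:main:1} gives $\mathcal M_N\cond L(\sigma^2)$, hence $\mathcal M_N=O_P(1)$ under $H_1$.

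The signal is computed next. With $k=k^\star_N+d$, $d\ge0$, a short case analysis on the position of $\ell$ relative to $a$ and $d$ — in which admissibility $\ell\le(N+k)/2$ rules out the only awkward constellation $\ell>a>d$ — yields $|\E R(\ell,k^\star_N+d)|=\min(\ell,d,a)\,\Delta_N$ for every admissible pair. With the reverse triangle inequality and maximality over $\ell$ this gives, for any admissible $\ell$,
\[
\widehat\Gamma^{TC}(k^\star_N+d)\ \ge\ w^{TC,\beta}(\ell,k^\star_N+d)\cdot\min(\ell,d,a)\,\Delta_N\ -\ \mathcal M_N .
\]
For consistency I would take $d=a$, $\ell=a$ (admissible, since then $(N+k)/2=a$): the weighted signal is $a^{1/2}\Delta_N\log^{-\beta}(C_0+N/a)\log^{-\beta}(C_0+2a/N)$, and since $N/a\le1$ the first log is of constant order while $\log(C_0+2a/N)\asymp\log(C_0+k^\star_N/N)$, so it is $\gtrsim(N+k^\star_N)^{1/2}\Delta_N\log^{-\beta}(C_0+k^\star_N/N)$. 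Dividing \eqref{e:conscond} by $\log^{\beta}(C_0+k^\star_N/N)/\sqrt{N+k^\star_N}$ shows this bound is $\gg\log^{\beta}(C_0+N/k^\star_N)\ge(\log C_0)^{\beta}>0$, hence $\to\infty$; combined with the display, $\widehat\Gamma^{TC}(k^\star_N+a)\to\infty$ in probability, proving the first assertion.

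For the delay, fix $\epsilon>0$, set $d^\star:=C\log^{2\beta}(N)\log^{2\beta}(C_0+k^\star_N/N)/\Delta_N^2$, and pick $T$ with $\mathbb P(L(\sigma^2)>T)<\epsilon$. Inspect the statistic at $k=k^\star_N+d$ with $d:=\min(\lceil d^\star\rceil,a)$ and $\ell=d$. If $\lceil d^\star\rceil\le a$, then $\min(\ell,d,a)=\lceil d^\star\rceil$, $(N+k)/N\le 2a/N$ and $N/\lceil d^\star\rceil\le N$, so using $\lceil d^\star\rceil^{1/2}\Delta_N\ge(d^\star)^{1/2}\Delta_N=\sqrt C\log^{\beta}(N)\log^{\beta}(C_0+k^\star_N/N)$ the weighted signal is $\ge c\sqrt C$ for a constant $c>0$ and all large $N$ (using $(\log N/\log(C_0+N))^{\beta}\to1$); if $\lceil d^\star\rceil>a$, then $d=a$ and the weighted signal tends to infinity exactly as in the consistency step. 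Either way, enlarging $C$ makes the weighted signal exceed $q_{1-\alpha}+T$ for all large $N$, whence $\mathbb P(\widehat\Gamma^{TC}(k^\star_N+d)>q_{1-\alpha})\ge\mathbb P(\mathcal M_N\le T)\to\mathbb P(L(\sigma^2)\le T)>1-\epsilon$. Since $\hat k$ is the first rejection time and $d\le\lceil d^\star\rceil$, the event $\{\hat k-k^\star_N>\lceil d^\star\rceil\}$ lies in $\{\widehat\Gamma^{TC}(k^\star_N+d)\le q_{1-\alpha}\}$, and $\lceil d^\star\rceil\lesssim C\log^{2\beta}(N)\log^{2\beta}(C_0+k^\star_N/N)/\Delta_N^2$; this gives the stated $O_P$ bound.

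The step I expect to be most delicate is the logarithmic bookkeeping in the regime where the nominal delay $d^\star$ exceeds the current sample size $a=N+k^\star_N$: there the crude estimate $\log(C_0+N/\ell)\lesssim\log N$ is too lossy, and one must use instead that the chosen window $\ell=a$ has $N/a\le1$, so the multiscale log-factor is of constant order and the weighted signal can still be pushed to infinity via \eqref{e:conscond}. A secondary, more routine point requiring care is checking admissibility of all inspected pairs and that the back window $[N+k-\ell+1,N+k]$ lies entirely after the change, which is what reduces the signal to the clean value $\min(\ell,d,a)\,\Delta_N$.
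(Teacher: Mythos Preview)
Your proof is correct and follows essentially the same route as the paper: split the detector at a chosen pair $(\ell,k)$ into a deterministic signal and a centered stochastic term, control the latter uniformly by the $H_0$ analysis (the paper cites the arguments of Lemma~\ref{lem:cutoff} for $O_P(1)$; you equivalently invoke Theorem~\ref{thm:main:1}), and then select windows so that the weighted signal dominates. The paper packages the signal analysis in a single lemma stating that rejection at time $k^\star+\ell$ occurs with probability tending to one whenever $\sqrt{\ell}\,\Delta_N\gg\log^\beta(C_0+N/\ell)\log^\beta(C_0+k/N)$, and leaves the extraction of the specific consistency and delay rates of Corollary~\ref{cor:power:delay} to the reader; your write-up is more explicit, computing the full signal $\min(\ell,d,a)\Delta_N$, specifying the choices $\ell=d=a$ (consistency) and $\ell=d=\min(\lceil d^\star\rceil,a)$ (delay), and handling the boundary case $d^\star>a$ by falling back on the consistency argument. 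One cosmetic slip: the ``awkward constellation'' ruled out by $\ell\le(N+k)/2$ is $\ell>\max(a,d)$, not specifically $\ell>a>d$; your conclusion is unaffected.
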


To understand the implications of the above corollary, it is helpful to consider some exemplary cases. First, consider the case where the change is very small and $\Delta_N \downarrow 0$. If $k_N^\star \lesssim N$, a change is detected consistently, if $\Delta_N\gg 1/\sqrt{N}$ (the standard condition in the related literature). If $k_N^\star\gg N$, the change can (neglecting logarithmic terms) be even smaller of size $\Delta_N\gg 1/\sqrt{N+k^\star}$ for consistent estimation, which improves upon existing sequential change point tests with a training sample (see Section \ref{sec:lit}). On the other hand, if changes are larger, short delays become important. For example, if $\Delta_N=\Delta$ is non-decaying, our approach produces delays of size $\sim \log^{2\beta}(N) \log^{2\beta}(C_0+k^\star_N/N)$. Both, the dependency on $N$ and $k_N^\star$ are drastic improvements to the related literature. There, delays are typically growing at some polynomial order in $N$ ($N^\zeta$ for some $\zeta>0$) and at a linear order in $ k^\star$, leading to very long delays when $k_N^\star \gg N$. In contrast our dependence on $k_N^\star$ is only logarithmic, yielding short delays, even for very late changes. 
\begin{rem} $ $\\[-5ex]\label{rem:main}
    \begin{itemize}
        \item[1)] \textit{On the proof technique} There exist broadly two standard strategies to validate sequential monitoring procedures: The first is to use strong approximations to replace the entire partial sum process of model errors by a Brownian motion. Such results are typically available from KMT-type approximations (see \cite{kmt:1976}). A different strategy is to carve up the detector into arguments $k \le TN$ for some finite $T$ and $k>NT$. To handle values of $k \le TN$ a weak invariance principle is sufficient and for $k>TN$ a Hàjek-Rényi inequality is used to show that the noise contribution is (almost) negligible (see, \cite{aue:kirch:2024}). We fuse both approaches in a new way. A KMT-type approximation is used to approximate $\hat \gamma^{TC}(\ell,k)$ by a Gaussian for values of $k$ that are not too large and values of $\ell$ that are not too small. The remaining scales are challenging to handle, but can be uniformly controlled, using our modulus of continuity result from Lemma \ref{lem:finite:moc}. This can be seen as an expansion of the Hàjek-Rényi strategy on a variety of scales, where the Gaussian approximation does not hold. 
        \item[2)] \textit{Comparison to theoretical benchmark} We have already in our literature review mentioned the work by \cite{yu:padilla:wang:rinaldo:2023}, which can theoretically be used for sequential change point testing - if the Orlicz norm of the model errors were known. While this is rarely the case in practice, the detection rates in \cite{yu:padilla:wang:rinaldo:2023} can serve as an "oracle benchmark".
        We find that, both in terms of power and detection delays,
        the performance guarantees of our (practically feasible) method in Corollary \eqref{cor:power:delay} equal those produced by \cite{yu:padilla:wang:rinaldo:2023}, except for logarithmic factors.  Moreover, in our subsequent simulation study, we observe that our feasible method actually outperforms \cite{yu:padilla:wang:rinaldo:2023}, because that procedure is asymptotically conservative.
       
        \item[3)] \textit{Parameter choices} The TWIN weight function involves two parameters, $\beta>1/2$ and $C_0>1$. These parameters are similar to those in the weight functions  $w^{(i), \eta}$ in Section \ref{sec:lit}. We have set in our simulation study $\beta=0.6$ and $C_0=20$, and we recommend these choices as default values. However, non-reported simulations suggest that the performance does not differ substantially when setting $\beta$ to any value in $(0.5,1]$ and $C_0$ to any value in $[5,30]$. Small values of $C_0$ (such as $1.5$ or $2$) lead to slightly worse outcomes for later changes in finite samples.         
        \end{itemize}
\end{rem}

\subsection{Nonparametric extension of TWIN}\label{sec:22}
The TWIN CUSUM, as presented in the previous section, focuses on the detection of mean changes in a time series. However, it can be of interest to study breaks in other distributional features of the data. Recently, monitoring for changes in the entire distribution function has been studied by \cite{kojadinovic:verdier:2020} and (in a somewhat different setting) \cite{horvath:kokoszka:wang:2021}. An advantage of this approach is not only its broader scope, but also its robustness to heavy tails of the data (covering data that are not subgaussian). In this spirit, we develop here a nonparametric version of TWIN, called NP-TWIN. A mathematical challenge is the extension of Lemma \ref{lem:finite:moc} to a partial sum process, indexed in distribution functions. 
To formulate our approach, we define for a bounded function $f:\mathbb{R}\to \mathbb{R}$ the supremum norm
\[
\|f\|_\infty:=\sup_{x \in \mathbb{R}}|f(x)|.
\]
Next, we define the (unscaled) empirical distribution function of the data $X_1,....X_j$ as
\[
    \hat G_j(x):=\sum_{i=1}^j1\{X_i\leq x\}.
\]
Then, in analogy to $\widehat \Gamma^{TC}$ we define
\[
    \widehat \Gamma^{TC}_F:=\max_{\substack{1\leq \ell \leq (N+k)/2,\, \ell\le k}}\omega^{TC,\beta}(\ell,k)\Big\|\min(1,\ell/N)\hat G_{\max(\ell,N)}(\cdot)-(\hat G_{N+k}(\cdot)-\hat G_{N+k-\ell}(\cdot))\Big\|_\infty.
\]
 The problem is now (generalizing \eqref{e:H0X} vs. \eqref{e:H1X}) to test the pair of hypotheses
\begin{align*}
    H_0^F:F^{(1)}&:=F_1=...=F_N=F_{N+1}=F_{N+2}=..., \\
    H_1^F:F^{(1)}&:=F_1=...=F_{k^\star}\neq F_{k^\star+1}=F_{k^\star+2}=...=:F^{(2)}, 
\end{align*}
where $F_i$ is the cumulative distribution function of $X_i$. 

\textbf{Theoretical analysis} Similarly as in the previous section, our approach relies on a lemma describing the modulus of continuity of a centered partial sum process. This one is indexed in two components: time ($t\ge 0$) and distributional location ($x \in \mathbb{R}$)
\begin{align*}
    U_N(t,x):= & N^{-1/2}\bigg(\sum_{i=1}^{\lfloor Nt\rfloor}(1\{X_i\leq x\}-F_i(x))\\ 
    &+(Nt-\lfloor Nt\rfloor)(1\{X_{\lfloor Nt\rfloor+1}\leq x\}-F_{\lfloor Nt\rfloor+1}(x))\bigg).  
\end{align*}
The  limit of this object is centered, Gaussian and well-known as the \textit{Kiefer-Müller process} on $[0,\infty)\times [0,1]$, which is characterized by the covariance structure 
\[
        \text{Cov}(K(t,x),K(s,y))=t \land s (x\land y-xy)~.
\]
For some background on the Kiefer Müller process, we refer to Section 2.12.1 in \cite{vaart:wellner:1996}.
In the following we will assume that the data $X_i$ have a continuous distribution. Then, by the probability integral transformation, the test statistic is distribution free under  $H_0^F$. The following lemma is formulated for i.i.d. data (that is, in the absence of a change).

\begin{lem}
\label{lem:finite:moc:empirical}      
    Let $\delta>0$ and $K$ be a Kiefer-Müller process. Suppose that $(X_i)_i$ are i.i.d. with continuous distribution function. Define the random variables
    \begin{align*}
    M_{U_N} := \sup_{\substack{0< s<t<\infty, \\ h=|t-s|}} \frac{\|U_N(t,\cdot)-U_N(s,\cdot)\|_\infty}{\sqrt{h \left(1+ \log\frac{t}{h} + \delta |\log t| \right)}},\\
    M_{K} := \sup_{\substack{0< s<t<\infty, \\ h=|t-s|}} \frac{\|K(t,\cdot)-K(s,\cdot)\|_\infty}{\sqrt{h \left(1+ \log\frac{t}{h} + \delta |\log t| \right)}}.
    \end{align*}
    Then, for some universal constants $C_X$  and $C_K$ (not depending on $N$) it holds that
    \[
    \mathbb{E}\exp\big( M_{U_N}^{2}\big)<C_X, \quad \mathbb{E}\exp\big( M_{K}^{2}\big)<C_K.
    \]
\end{lem}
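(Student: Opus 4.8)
The plan is to reduce Lemma~\ref{lem:finite:moc:empirical} to a version of Lemma~\ref{lem:finite:moc} applied uniformly over a suitable class of indicator functions, and then to separately control the Gaussian limit $K$ by a chaining argument. For the Kiefer--M\"uller bound on $M_K$, note that for fixed $x$ the process $t \mapsto K(t,x)$ is a Brownian motion scaled by $x(1-x)\le 1/4$, so the recently available result for the global modulus of continuity of Brownian motion (\cite{chevallier2023}), as used in Lemma~\ref{lem:finite:moc}, already delivers square-exponential moments for the modulus in the time variable. The extra work is to take the supremum over $x\in[0,1]$ inside the increments; here I would use that $K$ is a continuous Gaussian field whose canonical metric satisfies $\mathbb{E}\big(K(t,x)-K(s,y)\big)^2 \lesssim |t-s| + |x-y|$ on bounded time sets, and apply a Dudley-type entropy bound (for dyadic blocks of time as in the Brownian case, combined with the $\|\cdot\|_\infty$ over $x$) to obtain $\mathbb{E}\exp(M_K^2)<C_K$ after a standard truncation of large $t$ handled by the $\delta|\log t|$ discounting.

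For $M_{U_N}$, the first step is the observation that, under $H_0^F$ and by the probability integral transformation, the $X_i$ may be taken to be i.i.d.\ uniform on $[0,1]$, so $U_N$ becomes the (interpolated) uniform empirical process indexed by $[0,\infty)\times[0,1]$. I would then discretize the location variable: partition $[0,1]$ into $m\asymp N$ (or a larger polynomial) equally spaced points $x_1,\dots,x_m$, and on each fixed $x_j$ the increment $U_N(t,x_j)-U_N(s,x_j)$ is a partial-sum process of the i.i.d.\ bounded (hence subgaussian) variables $\mathbbm{1}\{X_i\le x_j\}-x_j$, so Lemma~\ref{lem:finite:moc} applies directly and gives $\mathbb{E}\exp(M_{P_N^{(j)}}^2)\le C_P$ with $C_P$ uniform in $j$ because the subgaussian norm is bounded by an absolute constant. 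A union bound over the $m$ points costs only a $\log m \asymp \log N$ factor, which is absorbed by rescaling the target random variable by a constant (one can state the lemma with a slightly larger absolute constant, or equivalently replace $M_{U_N}$ by $M_{U_N}/c$ for a universal $c$; more carefully, apply the bound to $M_{U_N}^2/2$ and use $\mathbb{E}\exp(M_{P_N^{(j)}}^2/2)$ together with $\sum_j (\mathbb{E}\exp)^{1/2}$, which only grows like $m$). The residual term from moving between a grid point and a nearby $x$ is controlled by monotonicity of $\hat G_j$ in $x$: between consecutive grid points the empirical process fluctuates by at most the number of sample points falling in an interval of length $1/m$, and a Bernstein/subgaussian bound on such counts shows this oscillation is $O_P(\sqrt{(\log N)/N})$ uniformly, which is negligible against the $\sqrt{h(1+\log(t/h))}$ normalization for the range of $h$ that matters (small $h$ contributions to the sup are bounded because the process increments there vanish at scale $1/N$).

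Finally, I would assemble the pieces: split $M_{U_N}$ into the contribution from $h \ge 1/N$ (or $h$ bounded below by the grid scale), handled by the grid argument plus oscillation control above, and the contribution from very small $h$, where the piecewise-linear interpolation makes $U_N(t,\cdot)-U_N(s,\cdot)$ linear in $h$ with slope bounded by $2N^{-1/2}$ in sup norm, so that term is deterministically $O(\sqrt{h}\,N^{1/2})=O(1)$ for $h\le 1/N$ and contributes a bounded amount. The discounting term $\delta|\log t|$ is used exactly as in Lemma~\ref{lem:finite:moc} to sum the tail over dyadic time blocks $t\in[2^k,2^{k+1}]$; since the per-block bound is uniform in $N$, the geometric series converges and yields a finite constant $C_X$ independent of $N$. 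The main obstacle I anticipate is the uniform-in-$x$ control: naively one loses a factor $\sqrt{\log N}$ from the union bound, and the delicate point is to show this is harmless --- either by proving the per-grid-point exponential bound is strong enough that $m\asymp\mathrm{poly}(N)$ grid points cost only a constant after renormalization, or by replacing the crude union bound with a genuine chaining/bracketing argument over the VC class $\{\mathbbm{1}\{\cdot\le x\}\}$ whose bracketing entropy is $\log(1/\varepsilon)$, so that the entropy integral is finite and no $\log N$ appears at all. I would pursue the bracketing route, since it mirrors the chaining already implicit in the Brownian modulus result and makes the constants genuinely universal.
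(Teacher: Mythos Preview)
Your approach is quite different from the paper's, and the difference is instructive. You propose to discretize in the location variable $x$, apply Lemma~\ref{lem:finite:moc} pointwise, and then patch together the grid points, acknowledging that the naive union bound loses a $\log N$ factor and suggesting a bracketing or chaining argument over the VC class of indicators as a remedy. The paper takes a much more direct route: it first proves (Lemma~\ref{lem:empirical:sub-Gaussian}) that the \emph{real-valued} process $t\mapsto \|U_N(t,\cdot)\|_\infty$ already has sub-Gaussian time increments,
\[
\big\|\,\|U_N(t,\cdot)-U_N(s,\cdot)\|_\infty\,\big\|_{\Psi_2}\le C\sqrt{|t-s|},
\]
with $C$ independent of $N$, and likewise for $K$. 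This is obtained via a Talagrand-type concentration inequality (Proposition~A.1.6 in \cite{vaart:wellner:1996}) together with a maximal inequality bounding $\mathbb{E}\|U_N(t,\cdot)-U_N(s,\cdot)\|_\infty\lesssim\sqrt{|t-s|}$. Once this increment bound is in hand, the entire machinery of Lemma~\ref{lem:finite:moc:proofs} (the Garsia--Rodemich--Rumsey argument, with the Banach-valued version from \cite{Friz:Victoir:2010} in place of Lemma~1 of \cite{chevallier2023}) applies verbatim to the scalar process $G(t)=U_N(t,\cdot)$ viewed as taking values in $(C[0,1],\|\cdot\|_\infty)$, and yields the exponential moment bound with a constant depending only on $C$.

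The gap in your sketch is exactly the one you flag: the union bound over $m\asymp N$ grid points genuinely inflates the constant by a factor polynomial in $N$, and your proposed fix by chaining in $x$ is left vague. If carried out, that chaining would in effect have to establish the same inequality $\big\|\,\|U_N(t,\cdot)-U_N(s,\cdot)\|_\infty\,\big\|_{\Psi_2}\lesssim\sqrt{|t-s|}$, at which point the discretization becomes redundant. So your route is not wrong in spirit, but it is circuitous: the clean step is to prove the sub-Gaussian increment bound for the sup-norm directly and then invoke a Banach-valued GRR lemma once, rather than working pointwise in $x$ and repairing the union-bound loss afterwards. (A minor slip: the interpolation slope of $U_N$ in $t$ is of order $N^{1/2}$, not $N^{-1/2}$; your conclusion for the small-$h$ regime is nonetheless correct.)
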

The lemma implies an analogue to Theorem \ref{thm:main:1}. Notice that the limit is pivotal, as the probability integral transform reduces the problem to the uniform case. 
\begin{theo}\label{thm:main:2}
    Suppose that the data $(X_i)_i$ are i.i.d. with continuous distribution function. Then, it follows that
    \begin{align*}
       \sup_{k \ge 1}\widehat{\Gamma}^{TC}_F(k) \overset{d}{\to} \sup_{t>1}\sup_{\substack{0 \leq s \leq t/2\\ t-s \geq 1 }}\frac{\Big\|\min(1,s)K(\max(1,s),\cdot)-(K(t,\cdot)-K(t-s,\cdot))\Big\|_\infty}{\sqrt{s}\log^{\beta}(C_0+1/s)\log^\beta(C_0+t)}=:L_F.
    \end{align*}
    
\end{theo}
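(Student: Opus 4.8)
The plan is to run the argument of Theorem~\ref{thm:main:1} in a function-valued setting, with the sequential empirical process $U_N$ in place of the partial sum process $P_N$, the Kiefer-Müller process $K$ in place of the Brownian motion $B$, and Lemma~\ref{lem:finite:moc:empirical} in place of Lemma~\ref{lem:finite:moc}. First I would reduce to uniform data: since the $X_i$ are i.i.d.\ with continuous $F$, the variables $F(X_i)$ are i.i.d.\ uniform on $[0,1]$ and $1\{X_i\le x\}=1\{F(X_i)\le F(x)\}$; as $x$ ranges over $\mathbb{R}$, $F(x)$ sweeps a dense subset of $[0,1]$ and the two endpoints contribute nothing to the supremum. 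Hence under $H_0^F$ one may assume the data are uniform on $[0,1]$, $U_N$ is the standard sequential uniform empirical process, the norm $\|\cdot\|_\infty$ is a supremum over $[0,1]$, and, with $K(t,\cdot)$ extended by $0$ off $(0,1)$, the claimed limit is pivotal.

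Next, under $H_0^F$ the deterministic drift cancels in the TWIN difference. Writing $\hat G_j(u)=ju+\sqrt N\,U_N(j/N,u)$, setting $V_N(s,t,u):=\min(1,s)U_N(\max(1,s),u)-\big(U_N(t,u)-U_N(t-s,u)\big)$, and substituting $s=\ell/N$, $t=(N+k)/N$, one checks that the quantity inside the norm defining $\widehat\Gamma^{TC}_F(k)$ equals $\sqrt N\,V_N(s,t,u)$, and the factor $\sqrt N$ cancels against $\omega^{TC,\beta}(\ell,k)$ because $\ell^{-1/2}=N^{-1/2}s^{-1/2}$. Thus $\sup_{k\ge1}\widehat\Gamma^{TC}_F(k)$ is a supremum of $\|V_N(s,t,\cdot)\|_\infty/\big(s^{1/2}\log^\beta(C_0+1/s)\log^\beta(C_0+t)\big)$ over a grid in the admissible region, whose closure is exactly $\{t>1,\ 0<s\le t/2,\ t-s\ge1\}$; as in the proof of Theorem~\ref{thm:main:1}, the grid supremum and the continuum supremum differ by a negligible amount by continuity of $U_N(\cdot,u)$ together with the estimate of the next step.

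For fixed $0<a<1<T<\infty$ I would split the region into the bulk $R^{\mathrm{bulk}}_{a,T}:=\{a\le s\le t/2,\ 1<t\le T,\ t-s\ge1\}$ and the remainder $\{s<a\}\cup\{t>T\}$. On the bulk I would invoke a KMT-type strong approximation of the sequential uniform empirical process by a Kiefer-Müller process (mirroring the Brownian approximation of $\hat\gamma^{TC}$ in the proof of Theorem~\ref{thm:main:1}); since on $R^{\mathrm{bulk}}_{a,T}$ all the time arguments stay in $[0,T]$ and $s^{1/2}$, $\log^\beta(C_0+1/s)$, $\log^\beta(C_0+t)$ are bounded away from $0$ and $\infty$, the weighted statistic built from $U_N$ equals, up to $o_P(1)$, the same functional of $K$ (alternatively one may use weak convergence of $U_N$ to $K$ in $\ell^\infty([0,T]\times[0,1])$ and the continuous mapping theorem). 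On the remainder I would use Lemma~\ref{lem:finite:moc:empirical}: bounding $\|V_N(s,t,\cdot)\|_\infty$ by $\|\min(1,s)U_N(\max(1,s),\cdot)\|_\infty+\|U_N(t,\cdot)-U_N(t-s,\cdot)\|_\infty$, estimating each summand by $M_{U_N}$ times the square-root factor from the lemma with the appropriate arguments, dividing by the weight, and using $\log(t/s)\le\log t+\log_+(1/s)$, one arrives at a bound of the form $M_{U_N}\,\rho(a,T)$ with $\rho(a,T)\to0$ as $a\downarrow0$, $T\uparrow\infty$ --- here $\beta>1/2$ is used crucially --- and $M_{U_N}=O_P(1)$ uniformly in $N$ by Lemma~\ref{lem:finite:moc:empirical}. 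The identical computation with $M_K$ in place of $M_{U_N}$ shows that $\sup_{R^{\mathrm{bulk}}_{a,T}}$ of the corresponding $K$-functional converges to $L_F$ as $a\downarrow0$, $T\uparrow\infty$. Combining the bulk convergence for fixed $a,T$ with the two uniform tail bounds and a standard convergence-together argument yields $\sup_{k\ge1}\widehat\Gamma^{TC}_F(k)\overset{d}{\to}L_F$.

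I expect the main obstacle to be the bulk step in the function-valued setting: one needs the strong approximation (or weak invariance principle) of the two-parameter sequential empirical process by the Kiefer-Müller process to be strong enough to control the supremum over $R^{\mathrm{bulk}}_{a,T}$ \emph{in the supremum norm over $[0,1]$} and uniformly in the time indices, and one must verify that $f\mapsto\sup_{R^{\mathrm{bulk}}_{a,T}}\|\min(1,s)f(\max(1,s),\cdot)-(f(t,\cdot)-f(t-s,\cdot))\|_\infty/\big(s^{1/2}\log^\beta(C_0+1/s)\log^\beta(C_0+t)\big)$ is continuous on the relevant function space, with $K$ concentrated on its separable subspace of continuous paths. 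The remaining pieces --- drift cancellation, the grid-to-continuum reduction, the tail estimate, and the assembly --- transcribe the proof of Theorem~\ref{thm:main:1} almost verbatim, since Lemma~\ref{lem:finite:moc:empirical} is already phrased with the norm $\|\cdot\|_\infty$.
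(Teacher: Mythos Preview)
Your proposal is correct and follows the same overall strategy as the paper: reduce to uniform data via the probability integral transform, observe that the deterministic drift cancels in the TWIN difference, approximate by the Kiefer--M\"uller process on a bulk region, and control the complement with Lemma~\ref{lem:finite:moc:empirical}. The paper's own proof is essentially a one-line reduction to the proof of Theorem~\ref{thm:main:1} after invoking the KMT strong approximation $\|\hat G_k(\cdot)-K(k,\cdot)\|_\infty\lesssim\log^2(k)$ of the sequential empirical process by a Kiefer--M\"uller process (built from independent Brownian bridges via a construction lemma); everything else is carried over verbatim.

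There is one minor but genuine difference in the decomposition. The paper takes the bulk to be the \emph{growing} window $1\le t\le N^\eta$ and exploits the polylogarithmic KMT rate to push the strong approximation through uniformly on that window, further splitting $\ell\le N^{1/2}$ (handled by the modulus of continuity) from $\ell>N^{1/2}$ (handled by the raw KMT error). You instead keep $a,T$ fixed and assemble via a convergence-together argument; on a fixed compact bulk, weak convergence of $U_N$ to $K$ in $\ell^\infty([0,T]\times[0,1])$ plus continuous mapping already suffices, so your route asks less of the approximation step and is slightly more modular. The paper's route avoids the final $a\downarrow0$, $T\uparrow\infty$ limit and couples to a single limiting process almost surely. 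Both are valid, and the obstacle you flag --- the two-parameter strong approximation in sup-norm --- is precisely what the paper resolves by citing Theorem~4 of Koml\'os--Major--Tusn\'ady (1975).
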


Just as in the previous section we may now define a detection scheme by letting $q_{1-\alpha}^F$ be the $(1-\alpha)$-quantile of $L_F$ and declaring at time $k$ the existence of a change point whenever
\[
    \widehat \Gamma^{TC}_F(k)>q_{1-\alpha}^F~.
\]
Note that this also defines a level $\alpha$ test even if the data is not continuous; breaking ties randomly only increases the value of KS type statistics like $\hat \Gamma^{TC}_F$  (see page 40 in \cite{Lehmann:2006}) and yields the same asymptotics as in the continuous case.\\ 
Denoting the time of first detection by $\hat k$ we obtain the analogue of Corollary \ref{cor:power:delay}.
\begin{cor}
\label{cor:power:delay:np}
    Suppose that the vectors $(X)_{1 \le i \le N+k_N^\star}$ and $(X)_{i > N+k_N^\star}$, each consist of i.i.d. observations, with continuous distribution functions $F^{(1)}, F^{(2)}$ respectively, and that the vectors are independent of each other. We suppose that the alternative $H_1^F$ holds, with a distributional change occurring at time $N+k_N^\star$ which is of size 
    \[
    \Delta_N^F:=\|F^{(1)}-F^{(2)}\|_\infty.
    \]
    Then, if 
    \begin{align} \label{e:conscond2}
    \Delta_N \gg \frac{\log^\beta(C_0+N/k_N^\star)\log^\beta(C_0+k_N^\star/N)}{\sqrt{N+k^\star_N}} 
    \end{align}
    it follows that
    \[
    \mathbb{P}\big(\widehat{\Gamma}^{TC}_F(k)>q_{1-\alpha}\big)\to 1.
    \]
    If condition \eqref{e:conscond2} holds, then the delay time satisfies
    \[
    \max(0, \hat k-k_N^\star)= O_P \bigg( \frac{\log^{2\beta}(N)\log^{2\beta}(C_0+k^\star_N/N)}{\Delta_N^2}\bigg).
    \]
\end{cor}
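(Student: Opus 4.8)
The plan is to run the proof of Corollary~\ref{cor:power:delay} essentially verbatim, with Lemma~\ref{lem:finite:moc:empirical} in place of Lemma~\ref{lem:finite:moc}, Theorem~\ref{thm:main:2} in place of Theorem~\ref{thm:main:1}, and the uniform norm $\|\cdot\|_\infty$ in place of $|\cdot|$ throughout. Writing $U_j(x):=\hat G_j(x)-\E\hat G_j(x)=\sum_{i=1}^j\big(1\{X_i\le x\}-F_i(x)\big)$ for the centered empirical process, the reverse triangle inequality gives, at any admissible scale $\ell$,
\[
\widehat\Gamma^{TC}_F(k)\ \ge\ \omega^{TC,\beta}(\ell,k)\Big(\|D_{\ell,k}\|_\infty-\|R_{\ell,k}\|_\infty\Big),
\]
where $D_{\ell,k}:=\min(1,\tfrac{\ell}{N})\sum_{i\le\max(\ell,N)}F_i-\sum_{N+k-\ell<i\le N+k}F_i$ is the deterministic signal and $R_{\ell,k}:=\min(1,\tfrac{\ell}{N})U_{\max(\ell,N)}-\big(U_{N+k}-U_{N+k-\ell}\big)$ the noise. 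The whole argument then reduces to a uniform bound on the weighted noise and a lower bound on the weighted signal at a well-chosen pair $(\ell,k)$.

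\emph{Uniform noise bound.} This is the one structural difference from the mean-change case: subtracting $\E X_i$ there leaves i.i.d.\ errors, whereas here the summands $1\{X_i\le x\}-F_i(x)$ are only independent under $H_1^F$. Using the assumed two-block structure, split $U_j=U_j^{(1)}+U_j^{(2)}$ into the centered empirical partial-sum processes of the i.i.d.-$F^{(1)}$ block $X_1,\dots,X_{N+k_N^\star}$ and of the i.i.d.-$F^{(2)}$ block $X_{N+k_N^\star+1},X_{N+k_N^\star+2},\dots$; embedding the finite first block into an auxiliary infinite i.i.d.-$F^{(1)}$ sequence, Lemma~\ref{lem:finite:moc:empirical} applies to each block and produces moduli $M^{(1)},M^{(2)}$ with finite square-exponential moments. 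Since $\omega^{TC,\beta}$ is designed so that $\omega^{TC,\beta}(\ell,k)\sqrt{(\ell/N)\big(1+\log((N+k)/\ell)+\delta\log((N+k)/N)\big)}$ is uniformly bounded — this is where $\beta>1/2$ and $C_0>1$ enter — the same computation as in the proof of Corollary~\ref{cor:power:delay} yields $\sup_{k\ge1}\max_{1\le\ell\le(N+k)/2,\,\ell\le k}\omega^{TC,\beta}(\ell,k)\,\|R_{\ell,k}\|_\infty\lesssim M^{(1)}+M^{(2)}=O_P(1)$. Together with Theorem~\ref{thm:main:2}, which shows that $q_{1-\alpha}^F$ is a fixed finite number, it remains only to make the weighted signal eventually exceed $q_{1-\alpha}^F$ plus this (tight) $O_P(1)$ bound.

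\emph{Signal, consistency and delay.} If $(\ell,k)$ is chosen so that $X_1,\dots,X_{\max(\ell,N)}$ are all pre-change ($\ell\le N+k_N^\star$) and $X_{N+k-\ell+1},\dots,X_{N+k}$ are all post-change ($k-\ell\ge k_N^\star$), a one-line computation gives $D_{\ell,k}=\ell\,(F^{(1)}-F^{(2)})$, hence $\|D_{\ell,k}\|_\infty=\ell\,\Delta_N^F$ and the weighted signal equals $\sqrt{\ell}\,\Delta_N^F\,\log^{-\beta}(C_0+N/\ell)\log^{-\beta}(C_0+(N+k)/N)$. For consistency, take $k$ and $\ell$ both of order $N+k_N^\star$ with $\ell\le\min(k-k_N^\star,(N+k)/2,N+k_N^\star)$: the scale discount is then $\asymp1$ and the weighted signal is $\asymp\sqrt{N+k_N^\star}\,\Delta_N^F\,\log^{-\beta}(C_0+k_N^\star/N)$, which diverges exactly under \eqref{e:conscond2} (for $k_N^\star\lesssim N$ the extra factor $\log^\beta(C_0+N/k_N^\star)$ in \eqref{e:conscond2} only strengthens the hypothesis). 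For the delay, evaluate at $k=k_N^\star+m$ with scale $\ell=\min(m,N+k_N^\star)$, which is admissible; when $m\le N+k_N^\star$ the weighted signal is $\gtrsim\sqrt{m}\,\Delta_N^F\,\log^{-\beta}(N)\log^{-\beta}(C_0+k_N^\star/N)$, using $N/\ell\le N$ and $(N+k)/N\lesssim(N+k_N^\star)/N$. This exceeds $q_{1-\alpha}^F$ plus the noise bound once $m$ is a sufficiently large multiple of $\log^{2\beta}(N)\log^{2\beta}(C_0+k_N^\star/N)/(\Delta_N^F)^2$, which is the asserted delay; a short case distinction disposes of the remaining regime, in which that quantity exceeds $N+k_N^\star$ — there one detects within $O(N+k_N^\star)$ steps by the consistency argument, and $N+k_N^\star$ is then itself of the asserted order.

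\emph{Main obstacle.} Granting Lemma~\ref{lem:finite:moc:empirical} and the already established Corollary~\ref{cor:power:delay}, the only genuinely new point is the noise bound: the increments $U_{N+k}-U_{N+k-\ell}$ have index windows that may \emph{straddle} the (moving) change point $N+k_N^\star$, so each must be split into its pre- and post-change parts before the modulus estimate of Lemma~\ref{lem:finite:moc:empirical} is applied to the two blocks separately; the scale-selection bookkeeping in the last paragraph is identical to the mean-change case.
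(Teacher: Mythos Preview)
Your proposal is correct and follows the same route the paper takes for the mean-change corollary: signal--noise decomposition, uniform $O_P(1)$ control of the weighted noise via the modulus-of-continuity lemma, and then choosing $(\ell,k)$ to make the weighted signal $\sqrt{\ell}\,\Delta_N^F/\big(\log^\beta(C_0+N/\ell)\log^\beta(C_0+(N+k)/N)\big)$ diverge. The paper does not write out a separate proof for Corollary~\ref{cor:power:delay:np}; its argument for Corollary~\ref{cor:power:delay} is the unnamed lemma at the end of the ``Proofs for TC'' section, which gives exactly the lower bound $\Gamma(k^\star+\ell)\ge \sqrt{\ell}\Delta/\big(\log^\beta(C_0+N/\ell)\log^\beta(C_0+k/N)\big)+O_P(1)$ and reads off consistency and delay from the divergence condition. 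Your write-up is in fact more detailed than the paper's on the scale selection.

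The one point you add beyond a verbatim transcription --- splitting the centered empirical noise into the two i.i.d.\ blocks so that Lemma~\ref{lem:finite:moc:empirical} can be invoked on each --- is the right fix and is left implicit in the paper. An equivalent (slightly slicker) route is to note that the proof of Lemma~\ref{lem:empirical:sub-Gaussian} in the appendix uses only independence and uniform boundedness of the centered indicators, not identical distribution, so the subgaussian increment condition~\eqref{eq:subg:condition} and hence Lemma~\ref{lem:finite:moc:proofs} apply directly to the full noise process under $H_1^F$ without any splitting; but your block argument is equally valid and requires no re-inspection of proofs.
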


\subsection{Extensions for dependent data} \label{sec:23}

In the previous sections, we have focused on the case where data are i.i.d. in the absence of a change. We now extend our method to detecting changes in the mean parameter of a dependent time series. \\
\textbf{Dependence} To quantify dependence, we use the notion of \textit{physical dependence} and we refer to \cite{koehne:mies:2025} for the specific definition used here.  For a sequence of centered, subgaussian noise variables $(\varepsilon_i)_i$, we suppose a Bernoulli shift model where $\varepsilon_i=G_i(\boldsymbol{\nu}_i)$ for $\boldsymbol{\nu_i}=(\nu_i,\nu_{i-1},...)$ with $\nu_i \sim U(0,1)$ i.i.d. random seeds. Notice that this construction entails that the time series $(\varepsilon_i)_i$ is strictly stationary.
To quantify the temporal dependence we introduce $\boldsymbol{\nu}_{i,h}=(\nu_i,...,\tilde \nu_{i-h},...)$ for independent copies $\tilde \nu_i$ of $\nu_i$. The physical dependence measure is then defined by
        \[
            \theta_{\Psi_2}(h)=\sup_{i \in \N}\|G_i(\boldsymbol{\nu}_i)-G_i(\boldsymbol{\nu}_{i,h})\|_{\Psi_2}~.
        \]
Dependence is measured by the decay rate of $\theta_{\Psi_2}(h)$ and
throughout this section, we will assume for some $\rho>0$ and some constant $C_\Psi>0$ that 
        \begin{align}
            \label{e:dep:decay}
             \theta_{\Psi_2}(h)\le C_\Psi h^{-2-\rho}.
        \end{align} 
        
\textbf{Monitoring for dependent data} Supposing subgaussian model errors that satisfy \eqref{e:dep:decay}, it is possible to derive an exact analogue of Theorem \ref{thm:main:1}.
\begin{theo}\label{thm:main:1:dep}
    Suppose that the model errors $(\varepsilon_i)_i$ are subgaussian and satisfy the physical dependence property \eqref{e:dep:decay}. Moreover, denote by $\{B(x):x\ge 0\}$ a centered Brownian motion with variance parameter $\sigma^2_{LR}:= \sum_{n \in \mathbb{Z}} \mathbb{E}[\varepsilon_0 \varepsilon_n]$.  Then, under the null hypothesis \eqref{e:H0X}, it holds that
    \begin{align*}
       \sup_{k \ge 1}\widehat{\Gamma}^{TC}(k) \overset{d}{\to} \sup_{t>1}\sup_{\substack{0 \leq s \leq t/2\\ t-s \geq 1 }}\frac{|\min(1,s)B(\max(1,s))-(B(t)-B(t-s))|}{\sqrt{s}\log^{\beta}(C_0+1/s)\log^{\beta}(C_0+t)}=L(\sigma_{LR}^2).
    \end{align*}
\end{theo}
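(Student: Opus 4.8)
The plan is to prove Theorem~\ref{thm:main:1:dep} by re-running the proof of Theorem~\ref{thm:main:1} almost verbatim, replacing its two i.i.d.-specific ingredients --- the KMT coupling of the partial sums with a Brownian motion, and the modulus-of-continuity bound of Lemma~\ref{lem:finite:moc} --- by versions that remain valid under physical dependence. Both replacements are routed through the classical martingale--coboundary decomposition. With $\mathcal{F}_i:=\sigma(\nu_j:j\le i)$, set $\theta_i:=\sum_{j\ge 1}\mathbb{E}[\varepsilon_{i+j}\mid\mathcal{F}_i]$ and $D_i:=\sum_{j\ge 0}\big(\mathbb{E}[\varepsilon_{i+j}\mid\mathcal{F}_i]-\mathbb{E}[\varepsilon_{i+j}\mid\mathcal{F}_{i-1}]\big)$, so that $\varepsilon_i=D_i+\theta_{i-1}-\theta_i$, that $(D_i)_i$ is a strictly stationary martingale difference sequence relative to $(\mathcal{F}_i)_i$, and that $S_n=M_n+\theta_0-\theta_n$ with $M_n:=\sum_{i=1}^n D_i$. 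Since $\sum_{h\ge 1}h\,\theta_{\Psi_2}(h)<\infty$ by \eqref{e:dep:decay}, a standard coupling estimate shows that the two defining series converge absolutely in the $\Psi_2$-norm; in particular $\|D_1\|_{\Psi_2}<\infty$, $\|\theta_0\|_{\Psi_2}<\infty$ and $\mathbb{E}[D_1^2]=\sigma_{LR}^2$.

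The first step is to show that replacing every occurrence of $S_\cdot$ by $M_\cdot$ in the detector $\widehat{\Gamma}^{TC}$ changes $\sup_{k\ge1}\widehat{\Gamma}^{TC}(k)$ by $o_P(1)$. Each partial sum entering $\hat\gamma^{TC}(\ell,k)$ acquires a coboundary error of the form $\theta_0-\theta_m$: the ``reference'' part (index $\max(\ell,N)$) appears multiplied by $\min(1,\ell/N)$ and is therefore damped by a factor $\lesssim N^{-1/2}$ relative to the weight $w^{TC,\beta}(\ell,k)$, while the ``recent'' parts (indices of order $N+k$) are damped by the temporal discounting $\log^{-\beta}(C_0+(N+k)/N)$. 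A dyadic maximal inequality for the stationary subgaussian sequence $(\theta_n)_n$ gives $\sup_{m\ge 1}\log^{-\beta}(C_0+m/N)\,|\theta_m|=O_P(\sqrt{\log N})$, and a short computation shows $\sup_{\ell}w^{TC,\beta}(\ell,k)\le C(\log N)^{-\beta}$ uniformly in $k$. Combining the two, the total coboundary contribution is $O_P\big((\log N)^{1/2-\beta}\big)+O_P\big(N^{-1/2}\sqrt{\log N}\big)=o_P(1)$; the first term is where the hypothesis $\beta>1/2$ enters, exactly as in the i.i.d.\ analysis. From here on only the rescaled martingale process $\widetilde P_N(t):=N^{-1/2}\big(M_{\lfloor Nt\rfloor}+(Nt-\lfloor Nt\rfloor)D_{\lfloor Nt\rfloor+1}\big)$ is relevant, and under \eqref{e:H0X} the constant mean cancels in $\hat\gamma^{TC}$ just as before.

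The second step is to establish the exact analogue of Lemma~\ref{lem:finite:moc} for $\widetilde P_N$ together with a strong approximation for $M_n$. The only probabilistic input to Lemma~\ref{lem:finite:moc} is the Hoeffding-type tail $\mathbb{P}(|S_b-S_a|>x)\le 2\exp(-c\,x^2/(b-a))$ of the increments; for a martingale difference sequence with $\|D_i\|_{\Psi_2}\le c'$ the Azuma--Hoeffding inequality supplies the very same tail, so the dyadic chaining behind Lemma~\ref{lem:finite:moc} carries over unchanged and yields $\mathbb{E}\exp(M_{\widetilde P_N}^2)\le C$ uniformly in $N$, in particular $M_{\widetilde P_N}=O_P(1)$ uniformly in $N$. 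For the coupling I would invoke a strong invariance principle for stationary martingale sums --- equivalently, for physically dependent subgaussian sequences under \eqref{e:dep:decay}; see \cite{koehne:mies:2025} for related tools --- producing a Brownian motion $B$ with variance parameter $\sigma_{LR}^2$ such that $\max_{n\le m}|M_n-B(n)|=o_P(\sqrt m)$; writing $B_N(t):=N^{-1/2}B(Nt)$, this gives $\sup_{t\le T}|\widetilde P_N(t)-B_N(t)|=o_P(1)$ for every fixed $T$.

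With these ingredients the remaining argument is identical to the proof of Theorem~\ref{thm:main:1}. Writing $s=\ell/N$, $t=(N+k)/N$, one has, uniformly over the admissible index set, $w^{TC,\beta}(\ell,k)\,\hat\gamma^{TC}(\ell,k)=\big|\min(1,s)\widetilde P_N(\max(1,s))-\widetilde P_N(t)+\widetilde P_N(t-s)\big| / \big(\sqrt s\,\log^{\beta}(C_0+1/s)\,\log^\beta(C_0+t)\big)+o_P(1)$. I would split the index set into the bulk $\{k\le TN,\ \ell\ge\varepsilon N\}$ and its complement. On the bulk the denominator is bounded below by a positive constant, so the weighted statistic is a continuous functional of $\widetilde P_N$ restricted to $[0,T+1]$; replacing $\widetilde P_N$ by $B_N$ via $\sup_{t\le T+1}|\widetilde P_N(t)-B_N(t)|=o_P(1)$ and applying the continuous mapping theorem, the bulk supremum converges to the supremum of the target functional over $\{T\ge t>1,\ \varepsilon\le s\le t/2,\ t-s\ge 1\}$ with $B$ in place of $\widetilde P_N$; letting $N\to\infty$, then $\varepsilon\downarrow 0$ and $T\uparrow\infty$, recovers $L(\sigma_{LR}^2)$. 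On the complement --- small scales $s<\varepsilon$ or late times $t>T$ --- the uniform bound $M_{\widetilde P_N}=O_P(1)$ gives, exactly as for $P_N$ in Theorem~\ref{thm:main:1}, $w^{TC,\beta}(\ell,k)\,\hat\gamma^{TC}(\ell,k)\lesssim M_{\widetilde P_N}\big(\log\max(1/s,t)\big)^{1/2-\beta}$, which is uniformly small and vanishes as $\varepsilon\downarrow0$, $T\uparrow\infty$; the same estimate controls the corresponding part of $L(\sigma_{LR}^2)$. Combining bulk and complement yields the asserted convergence. The delicate part is the pair of dependence-specific inputs above: controlling the coboundary and the martingale modulus of continuity \emph{uniformly over the non-compact time axis}, where $|\theta_n|$ and $|M_b-M_a|$ must be weighed against the weak discounting factor $\log^{-\beta}(C_0+(N+k)/N)$ (this is precisely what forces $\beta>1/2$), and securing the strong approximation for $M_n$ at the $o_P(\sqrt m)$ rate under only the polynomial decay \eqref{e:dep:decay}.
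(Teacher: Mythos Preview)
Your route is genuinely different from the paper's. The paper does \emph{not} pass through a martingale--coboundary decomposition at all. It observes that Lemma~\ref{lem:finite:moc} (in the generality of Lemma~A.1) only needs the unconditional increment bound $\|P_N(t)-P_N(s)\|_{\Psi_2}\le K\sqrt{|t-s|}$, and that this holds for the \emph{original} dependent partial sums $P_N$ directly by Theorem~2.8 of K\"ohne--Mies; likewise it invokes Wu's strong approximation $|S_j-W(j)|\lesssim j^{1/3}$ almost surely for the original $S_j$. With these two black-box replacements the i.i.d.\ proof of Theorem~\ref{thm:main:1} is rerun verbatim, only with the small-scale cutoff $\ell\le N^{1/2}$ replaced by $\ell\le N^{4/5}$ and the time horizon restricted to $k\le N^{1+\eta}$ with $\eta<1/5$, so that the coarser $j^{1/3}$ rate is still beaten by $\sqrt{\ell}\ge N^{2/5}$.

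Compared to this, your martingale decomposition buys a weaker requirement on the coupling (your fixed-$T$ bulk argument only needs a functional $o_P(\sqrt m)$ invariance principle, not an almost-sure polynomial rate), at the cost of an additional coboundary analysis. Your coboundary bound is essentially correct, though the combination ``$\sup_\ell w\le C(\log N)^{-\beta}$'' with the dyadic bound on $\log^{-\beta}(C_0+m/N)|\theta_m|$ uses the two logarithmic factors of $w^{TC,\beta}$ for two different purposes, which you should make explicit. The one real gap is your appeal to Azuma--Hoeffding: a bound $\|D_i\|_{\Psi_2}\le c'$ on the \emph{unconditional} Orlicz norm of a martingale difference does \emph{not} by itself yield $\|M_b-M_a\|_{\Psi_2}\le C\sqrt{b-a}$; the usual subgaussian Azuma inequality requires the conditional bound $\mathbb{E}[e^{\lambda D_i}\mid\mathcal{F}_{i-1}]\le e^{c\lambda^2}$ a.s., which is not automatic. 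You can repair this either by checking conditional subgaussianity in the Bernoulli-shift model, or more simply by noting that K\"ohne--Mies already gives $\|S_b-S_a\|_{\Psi_2}\lesssim\sqrt{b-a}$ for the dependent sums themselves, whence $\|M_b-M_a\|_{\Psi_2}\le\|S_b-S_a\|_{\Psi_2}+2\|\theta_0\|_{\Psi_2}\lesssim\sqrt{b-a}$ for $b-a\ge 1$. Once you accept this, the detour through $M_n$ is unnecessary: the same bound applies to $P_N$ directly and the paper's shorter argument becomes available.
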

To obtain a feasible test decision we thus only need to estimate the long-run-variance $\sigma^2_{LR}$ of the noise. Estimators of the long-run variance will typically be based on the initial sample, and a consistent, fully non-parametric version is discussed, e.g., in  \cite{Wu2007}. \\

\textbf{A self-normalized monitoring scheme} Estimation of the long-run variance is  practically often challenging, especially for small to moderate sample sizes. A key difficulty is the necessity to choose a bandwidth parameter, typically a number of autocovariances, that is related to the dependence of the underlying noise. This object is qualitatively different from the other parameters used in our method so far, where we could give generally applicable recommendations. 
The challenges of long-run variance estimation have been discussed extensively in the literature and we refer to  \cite{Mueller:2007} and \cite{shao:2015} for an overview. 

An elegant way to circumvent the process of long-run-variance estimation altogether is using a self-normalized statistic. An introduction to self-normalization for retrospective testing  is given by \cite{shao:2015} and extensions to sequential testing were first discussed by  \cite{chan:ng:yau:2021}.
To perform self-normalization, we define the normalizer
\[
V_N := \frac{1}{N^{3/2}} \sum_{i=1}^N \big| S_i-(i/N)S_N\big|
\]
which is based on the training data and therewith the self-normalized version of the TWIN detector as
\[
\widehat{\Gamma}^{SNTC}(k):=\frac{\widehat{\Gamma}^{TC}(k)}{V_N}, \qquad k \ge 1.
\]
Here $\widehat{\Gamma}^{TC}$ is the TWIN CUSUM detector defined in eq. \eqref{e:TC}. As we will show in the next theorem, under the hypothesis of no change, the detector $\sup_{k \ge 1} \widehat{\Gamma}^{SNTC}(k)$ converges to a completely pivotal limit. Importantly, this is not because $V_N$ is a consistent estimator of $\sigma_{LR}$. Indeed, $V_N$ is not consistent for any parameter as it converges to a non-degenerate limit. However, $V_N$ is asymptotically proportional to $\sigma_{LR}$, such that the factor $\sigma_{LR}$ in the TWIN CUSUM (numerator) and the normalizer (denominator) cancel each other out.

\begin{theo}\label{thm:main:3}
    Suppose that the model errors $(\varepsilon_i)_i$ are subgaussian and satisfy the physical dependence property \eqref{e:dep:decay}. 
     Moreover, denote by $\{B_0(x):x\ge 0\}$ a standard Brownian motion.  Then, under the null hypothesis \eqref{e:H0X}, it holds that
    \begin{align}
    \label{eq:limit:sn}
       \sup_{k \ge 1}\widehat{\Gamma}^{SNTC}(k) \overset{d}{\to} \sup_{t>1}\sup_{\substack{0 \leq s \leq t/2\\ t-s \geq 1 }}\frac{|\min(1,s)B_0(\max(1,s))-(B_0(t)-B_0(t-s))|}{\sqrt{s}\log^{\beta}(C_0+1/s)\log^{\beta}(C_0+t) \,V }=:L_{SN},
    \end{align}
    where 
    \[
    V := \int_0^1 |B_0(x)-xB_0(1)|dx.
    \]
\end{theo}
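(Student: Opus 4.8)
The plan is to deduce the statement from Theorem~\ref{thm:main:1:dep} by showing that, under $H_0$, the numerator $\sup_{k\ge1}\widehat\Gamma^{TC}(k)$ and the normalizer $V_N$ converge \emph{jointly} to functionals of one and the same limiting Brownian motion, and then to apply the continuous mapping theorem to their ratio. The first observation is that under $H_0$ the normalizer is automatically detrended: writing $X_i=\mu^{(1)}+\varepsilon_i$ gives $S_i-(i/N)S_N=\sum_{j\le i}\varepsilon_j-(i/N)\sum_{j\le N}\varepsilon_j$, and since $P_N(i/N)=N^{-1/2}\sum_{j\le i}\varepsilon_j$ by \eqref{eq:def:PN}, one obtains the exact identity $V_N=\tfrac1N\sum_{i=1}^N\big|P_N(i/N)-(i/N)P_N(1)\big|$, which is a Riemann sum for $\Phi\big(P_N|_{[0,1]}\big)$ with $\Phi(f):=\int_0^1|f(x)-xf(1)|\,dx$. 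Since $\Phi$ is Lipschitz on $C[0,1]$ ($|\Phi(f)-\Phi(g)|\le 2\|f-g\|_\infty$) and the invariance principle for physically dependent subgaussian sequences under \eqref{e:dep:decay} — the same one underlying the proof of Theorem~\ref{thm:main:1:dep}; here $\sigma^2_{LR}<\infty$ by summability of $\theta_{\Psi_2}$ and $\sigma^2_{LR}>0$ by assumption — gives $P_N\Rightarrow\sigma_{LR}B_0$ in $C[0,1]$, tightness of $P_N$ makes the Riemann-sum error $o_P(1)$, and hence $V_N\cond\Phi(\sigma_{LR}B_0)=\sigma_{LR}V$, with $V$ as in the theorem.

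The heart of the proof is the \emph{joint} convergence of numerator and denominator to functionals of the \emph{same} $B_0$. For fixed $T>1$, let $\Psi_T$ be the truncated detector defined like $\widehat\Gamma^{TC}$ in \eqref{e:TC} but with the additional restriction $k\le TN$. Under $H_0$ every increment entering $\hat\gamma^{TC}$ reduces to a centered partial sum, so $\Psi_T$ is a continuous functional of $P_N$ restricted to the compact interval $[0,T+1]$; therefore the pair $\big(\Psi_T,\Phi(P_N|_{[0,1]})\big)$ is a continuous functional of $P_N|_{[0,T+1]}$, and the invariance principle together with the continuous mapping theorem yields $(\Psi_T,V_N)\cond(\Psi_T^\infty,\sigma_{LR}V)$, where $\Psi_T^\infty$ is the corresponding functional of $\sigma_{LR}B_0$ — driven by the same $B_0$ as $V$. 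Letting $T\to\infty$, one has $\Psi_T^\infty\uparrow L(\sigma^2_{LR})$ almost surely, while $\big|\sup_{k\ge1}\widehat\Gamma^{TC}(k)-\Psi_T\big|\to 0$ in probability uniformly in $N$ as $T\to\infty$; this last bound on the contribution of large $k$ and small $\ell$ is exactly what the dependent version of the modulus-of-continuity estimate of Lemma~\ref{lem:finite:moc} delivers inside the proof of Theorem~\ref{thm:main:1:dep}. Since neither $V_N$ nor its limit depends on $T$, a standard converging-together argument upgrades this to $\big(\sup_{k\ge1}\widehat\Gamma^{TC}(k),V_N\big)\cond\big(L(\sigma^2_{LR}),\sigma_{LR}V\big)$ with both coordinates functionals of the same $B_0$.

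To conclude, $V_N>0$ does not depend on $k$, so $\sup_{k\ge1}\widehat\Gamma^{SNTC}(k)=\big(\sup_{k\ge1}\widehat\Gamma^{TC}(k)\big)/V_N$; the map $(a,b)\mapsto a/b$ is continuous wherever $b>0$ and $\sigma_{LR}V>0$ almost surely (as $\sigma_{LR}>0$ and the Brownian bridge is a.s.\ not identically zero), so the continuous mapping theorem gives $\sup_{k\ge1}\widehat\Gamma^{SNTC}(k)\cond L(\sigma^2_{LR})/(\sigma_{LR}V)$. Brownian scaling $B=\sigma_{LR}B_0$ in the definition of $L(\sigma^2_{LR})$ then gives $L(\sigma^2_{LR})=\sigma_{LR}\widetilde L$, where $\widetilde L$ is the same supremum functional with $B_0$ in place of $B$; because the very same $B_0$ drives $V$ (by the previous paragraph), the factor $\sigma_{LR}$ cancels and $L(\sigma^2_{LR})/(\sigma_{LR}V)=\widetilde L/V=L_{SN}$, the limit in \eqref{eq:limit:sn}.

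The step I expect to be the main obstacle is the joint convergence, i.e.\ verifying that the coupling (invariance principle / KMT-type approximation) used to prove Theorem~\ref{thm:main:1:dep} can be taken \emph{simultaneously} for $\sup_{k\ge1}\widehat\Gamma^{TC}(k)$ and for $V_N$, so that the limiting Brownian motion is genuinely shared; without a shared $B_0$, the cancellation of $\sigma_{LR}$ in the final step would fail. As $V_N$ depends only on $P_N|_{[0,1]}$, which lies comfortably inside the regime where that approximation is valid, this should amount to carrying one additional coordinate through the existing argument, after which everything else is routine.
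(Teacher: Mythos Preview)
Your overall route—establish joint convergence of $\sup_{k\ge 1}\widehat\Gamma^{TC}(k)$ and $V_N$ to functionals of the \emph{same} Brownian motion, then divide via the continuous mapping theorem and scale out $\sigma_{LR}$—is the paper's route as well, and the final paragraph of your proposal in fact anticipates precisely how the paper executes it. The paper does not use the functional invariance principle on $C[0,T+1]$; instead it re-uses the strong (almost sure) approximation from the proof of Theorem~\ref{thm:main:1:dep}: there is a single Brownian motion $W$ with $|S_j-W(j)|\lesssim j^{1/3}$, so both $\sup_k\widehat\Gamma^{TC}(k)$ and $V_N$ lie within $o_P(1)$ of the corresponding functionals of $\tilde W(t)=N^{-1/2}W(Nt)$. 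That is exactly the ``carry one additional coordinate through the existing argument'' step you flagged.

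Your alternative via the continuous mapping theorem on $C[0,T+1]$ contains a genuine gap. The truncated detector $\Psi_T$ restricts $k\le TN$ but still lets $\ell$ range down to $1$, and this functional is \emph{not} continuous on $C[0,T+1]$ in the uniform norm: the weight $w^{TC,\beta}(\ell,k)$ carries the factor $\ell^{-1/2}$, so in rescaled variables the supremand is $\asymp |P_N(t)-P_N(t-s)|/(\sqrt{s}\,\log^\beta(C_0+1/s))$ for small $s$, which probes the modulus of continuity at vanishing scales. Uniform convergence of paths does not control this quantity (a path can be perturbed by something small in sup norm but wildly oscillating at scale $s\downarrow 0$, blowing up the supremum). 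To repair the argument you would also have to truncate $\ell\ge\epsilon N$, get continuity of the doubly truncated functional, and then use the modulus-of-continuity lemma to make the small-$\ell$ piece uniformly small in both $P_N$ and the Brownian limit as $\epsilon\downarrow 0$. At that point you are essentially reproducing the strong-approximation analysis anyway.

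One further point of comparison: the paper adds an ingredient absent from your proposal. It observes that the limiting numerator is a functional of $(B_0(t))_{t\ge 1}$, while $V=\int_0^1|B_0(x)-xB_0(1)|\,dx$ depends only on the Brownian bridge on $[0,1]$; a covariance calculation shows these are independent. The paper invokes this independence when passing to the limit.
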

As the theorem shows, the limiting distribution $L_{SN}$ is pivotal and can readily be simulated. We include table of the largest ten percentiles for the recommended parameter choices $C_0=20, \beta=0.6$ in Table \ref{tab:quantiles}.
\begin{table}[H]
\centering
\resizebox{0.9\textwidth}{!}{%
\begin{tabular}{ccccccccccc}
\toprule
\textbf{Percentile} & 90\% & 91\% & 92\% & 93\% & 94\% & 95\% & 96\% & 97\% & 98\% & 99\% \\
\midrule
\textbf{Value} & 6.460 & 6.612 & 6.674 & 6.920 & 7.093 & 7.292 & 7.603 & 7.964 & 8.424 & 9.186 \\
\bottomrule
\end{tabular}%
}
\caption{\label{tab:quantiles} 90\% through 99\% percentiles of the distribution $L_{SN}$. }
\end{table}
Denoting the upper $\alpha$ quantile of $L_{SN}$ by $q_{1-\alpha}^{SN}$, we propose the test decision to reject $H_0$ if
\begin{align} \label{e:decision3}
\widehat{\Gamma}^{SNTC}(k)>q_{1-\alpha}^{SN}.
\end{align}
We conclude with an adaption of Corollary 
\ref{cor:power:delay} to dependent data and the self-normalized detector.
\begin{cor}
    Suppose that the model errors $(\varepsilon_i)_i$ are subgaussian and satisfy the physical dependence property \eqref{e:dep:decay}. Moreover suppose that the alternative \eqref{e:H1X} holds. Then, the results of Corollary \ref{cor:power:delay} remain valid, if $\widehat{\Gamma}^{TC}$ is replaced by $\widehat{\Gamma}^{SNTC}$ and $q_{1-\alpha}$ by $q_{1-\alpha}^{SN}$.
\end{cor}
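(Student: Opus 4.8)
The starting point is that $\widehat{\Gamma}^{SNTC}(k)=\widehat{\Gamma}^{TC}(k)/V_N$, so the numerator is literally the TWIN detector whose behaviour under the alternative is already controlled by the analysis behind Corollary~\ref{cor:power:delay}; only the normalizer $V_N$ is new. I would argue in three steps. \emph{Step 1 (control of $V_N$ under $H_1$).} Under \eqref{e:H1X} the change occurs at time $N+k_N^\star$ with $k_N^\star\ge 1$, so the training observations $X_1,\dots,X_N$ all have mean $\mu^{(1)}$; writing $X_j=\mu^{(1)}+\varepsilon_j$ one checks that the centering in $S_i-(i/N)S_N$ annihilates the constant $\mu^{(1)}$, hence $V_N$ is a measurable functional of $\varepsilon_1,\dots,\varepsilon_N$ only and has exactly the law it has under $H_0$. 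By the functional central limit theorem for physically dependent subgaussian sequences used in the proof of Theorem~\ref{thm:main:3}, $N^{-1/2}\sum_{j\le \lfloor N\cdot\rfloor}\varepsilon_j \Rightarrow \sigma_{LR}B_0(\cdot)$, so $V_N \overset{d}{\to} \sigma_{LR}V$ with $V=\int_0^1|B_0(x)-xB_0(1)|\,dx$. Since $V>0$ almost surely, this yields $V_N=O_P(1)$ and, for every $\epsilon>0$, a constant $C_V<\infty$ with $\mathbb{P}(V_N\le C_V)\ge 1-\epsilon$ for all large $N$.

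\emph{Step 2 (power and delay of $\widehat{\Gamma}^{TC}$ for dependent data against an arbitrary fixed threshold).} Inspecting the proof of Corollary~\ref{cor:power:delay}, its conclusions rest on two ingredients: a deterministic lower bound for the weighted signal contribution to $\widehat{\Gamma}^{TC}(k)$ at a window length $\ell$ aligned with the change — which is unaffected by temporal dependence — and an upper bound showing that the stochastic (noise) part is uniformly negligible at the relevant scales, for which the i.i.d. modulus-of-continuity input (Lemma~\ref{lem:finite:moc}) is replaced by its physically dependent counterpart established on the way to Theorem~\ref{thm:main:1:dep}. Carrying this through gives: for every fixed $\tau>0$ there is a constant $c_\tau<\infty$ so that at $k_N:=k_N^\star+\big\lceil c_\tau\log^{2\beta}(N)\log^{2\beta}(C_0+k_N^\star/N)/\Delta_N^2\big\rceil$ one has $\mathbb{P}\big(\widehat{\Gamma}^{TC}(k_N)>\tau\big)\to 1$; and, under the strict consistency condition \eqref{e:conscond}, already $\widehat{\Gamma}^{TC}(k_N)\to\infty$ in probability.

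\emph{Step 3 (transfer to the self-normalized detector).} On the event $\{V_N\le C_V\}$ from Step~1, the inequality $\widehat{\Gamma}^{TC}(k)>q_{1-\alpha}^{SN}C_V$ forces $\widehat{\Gamma}^{SNTC}(k)=\widehat{\Gamma}^{TC}(k)/V_N>q_{1-\alpha}^{SN}$. Applying Step~2 with the fixed threshold $\tau=q_{1-\alpha}^{SN}C_V$ produces a constant $c'$ such that at $k_N=k_N^\star+\big\lceil c'\log^{2\beta}(N)\log^{2\beta}(C_0+k_N^\star/N)/\Delta_N^2\big\rceil$ we have $\widehat{\Gamma}^{SNTC}(k_N)>q_{1-\alpha}^{SN}$ with probability tending to one; since replacing $c_\tau$ by $c'$ only alters the multiplicative constant, this simultaneously gives consistency and the claimed $O_P$ delay bound, so that the conclusions of Corollary~\ref{cor:power:delay} hold verbatim with $\widehat{\Gamma}^{TC}$ and $q_{1-\alpha}$ replaced by $\widehat{\Gamma}^{SNTC}$ and $q_{1-\alpha}^{SN}$. (For the consistency statement alone one may equivalently note that $\widehat{\Gamma}^{TC}(k_N)\to\infty$ together with $V_N=O_P(1)$ forces $\widehat{\Gamma}^{SNTC}(k_N)\to\infty$, hence it eventually exceeds the fixed quantile $q_{1-\alpha}^{SN}$.)

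\emph{Main obstacle.} The genuinely new work sits in Step~2: one must verify that the dependent-data noise estimates underpinning Theorem~\ref{thm:main:1:dep} are strong enough, uniformly over the small window lengths $\ell$, to keep the stochastic part of $\widehat{\Gamma}^{TC}(k)$ dominated by the signal at precisely the scale where the delay is attained — in effect re-running the proof of Corollary~\ref{cor:power:delay} with the physical-dependence modulus-of-continuity bound in place of Lemma~\ref{lem:finite:moc}. Steps~1 and~3 are then routine: Step~1 only re-uses the invariance principle already invoked for Theorem~\ref{thm:main:3}, and Step~3 is a purely deterministic sandwich argument on a high-probability event.
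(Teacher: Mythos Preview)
Your proposal is correct and matches the paper's (implicit) approach: the paper does not spell out a separate proof for this corollary, but the intended argument is exactly to re-run the power/delay lemma under dependence—using the physical-dependence subgaussian increment bound that the paper establishes (via Theorem~2.8 of K\"ohne--Mies) to make Lemma~\ref{lem:finite:moc:proofs} and hence Lemma~\ref{lem:cutoff} applicable—and then to divide by the normalizer $V_N$, which depends only on the stable training sample and converges in distribution to $\sigma_{LR}V>0$ by the same invariance principle used for Theorem~\ref{thm:main:3}. Your Step~3 sandwich argument is the clean way to turn the $O_P(1)$ control of $V_N$ into the $O_P$ delay bound for $\widehat{\Gamma}^{SNTC}$, and the ``main obstacle'' you flag in Step~2 is already handled in the paper's dependent-data section.
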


\section{Finite Sample Performance} \label{sec:finite}

We evaluate the performance of our new methods using simulations and a data example. We compare them with a range of existing sequential change point tests, as well as with the infeasible benchmark of \cite{yu:padilla:wang:rinaldo:2023}.

\subsection{Simulation study}

In our simulations, data are generated according to the following scheme:
\begin{align}
    X_i = \varepsilon_i + \Delta 1\{i \geq N + k^\star\}, \quad  i = 1, \ldots, N + T.
\end{align}
Here, $\Delta$ denotes the size of the change, $N$ the length of the training period, and $T$ the length of the monitoring period. In our theory, $T = \infty$, but for simulations we have to choose a large but finite value. The location of the change within the monitoring period is denoted by $k^\star$.  
The centered noise sequence $(\varepsilon_i)_i$ is i.i.d. and follows one of the following distributions:
\begin{enumerate}
    \item[(1)] Standard normal distribution.
    \item[(2)] Uniform distribution on $[-\sqrt{3}, \sqrt{3}]$.
    \item[(3)] Truncated centered exponential: let $E_i$ follow an exponential distribution and define $E_i'$ as $E_i$ conditioned on $|E_i| \leq 2.513$. Then set $\varepsilon_i = E_i' - \mathbb{E}[E_i']$.
    \item[(4)] Standard Cauchy distribution.
\end{enumerate}
Truncation in (3) ensures that the noise is subgaussian (rather than merely exponential).  
Distributions (1)–(3) have mean $0$ and variance $1$. For the Cauchy distribution (4), neither mean nor variance exists, but the median is $0$.  \\
We compare our new methods to a range of benchmarks. For brevity, we use the abbreviations introduced in Table \ref{tab:methods}. Our new methods are denoted by TC (for mean changes) and NPTC (nonparametric test for general distributional changes). The remaining methods are state-of-the-art approaches discussed in Section \ref{sec:lit}: C (CUSUM), PC (Page CUSUM), FC (Full CUSUM), and MM (modified MOSUM), each equipped with the weight function $w^{(1), \eta}$ from \eqref{e:weight1}. The WC (weighted CUSUM) method combines the Full CUSUM aggregator with the weight function \eqref{e:weight2}. The method RC is the (practically infeasible) approach of \cite{yu:padilla:wang:rinaldo:2023}, which serves as an important theoretical benchmark.  
The weight functions require parameter choices. We fix $\eta = 0.4$ in $w^{(1), \eta}$ and $w^{(2), \eta}$, and $\beta = 0.6$ in $w^{TC, \beta}$. Moreover, in $w^{(2), \eta}$ and $w^{TC, \beta}$ we set $C_0 = 20$. For MM we let $b=0.4$ as in \cite{kirch:weber:2018}.  

\begin{table}[h]
\centering
\begin{tabular}{lll}
\toprule
\textbf{Abbreviation} & \textbf{Method} & \textbf{Reference} \\
\midrule
TC   & TWIN CUSUM           & This paper (test for mean change)\\
NPTC & NP-TWIN CUSUM        & This paper (nonparametric test)\\
C    & CUSUM                & \cite{horvath:kokoszka:huskova:steinebach:2003} \\
PC   & Page CUSUM           & \cite{fremdt:2015} \\
FC   & Full CUSUM           & \cite{dette:goesmann:2018} \\
WC   & Weighted CUSUM       & \cite{kutta:doernemann:2025} \\
MM   & mMOSUM               & \cite{chen:tian:2010,kirch:weber:2018} \\
RC   & Retrospective CUSUM   & \cite{yu:padilla:wang:rinaldo:2023} \\
\bottomrule
\end{tabular}
\caption{Summary of change point detection methods, their abbreviations, and references.}
\label{tab:methods}
\end{table}

The theoretical method of \cite{yu:padilla:wang:rinaldo:2023} requires the Orlicz norm of the noise as input. To make the comparison fairer, we assume that for TC, C, PC, FC, WC, and MM the variance is known to satisfy $\mathbb{V}ar(\varepsilon_i) = 1$ (in practice, the variance would be estimated from the training sample). Our method NPTC does not require the variance as input, as it is  distribution-free.  \\
Throughout, the nominal level is set to $\alpha = 0.05$ for all methods. Quantiles of the asymptotic null distributions are approximated for all methods using $1000$ draws from the limiting distribution, except for RC, where no limiting quantiles are needed. In the tables below, rejection rates are based on $1000$ simulation runs, while boxplots are based on $200$ runs.

\subsubsection{Level approximation}

We first investigate how well the methods approximate the nominal level for different lengths $N$ of the training period. The results are displayed in Table \ref{tab:level}. For normal and uniform data, all feasible methods approximate the nominal level well. For exponential data, all methods except NPTC are undersized. The infeasible method RC is undersized across all subgaussian settings. This is not surprising, as the proof of RC in \cite{yu:padilla:wang:rinaldo:2023} relies on non-sharp concentration results for sub-Gaussian data to control the nominal level in finite samples.  
For heavy-tailed data, all methods reject the null hypothesis in $100\%$ of cases, except for our nonparametric method NPTC, which approximates the nominal level accurately. These findings are consistent with our theoretical results and with known false alarm rates in the literature.  

\renewcommand{\arraystretch}{1}
\begin{table}[h]
\centering
\small
\setlength{\tabcolsep}{8pt} 
\begin{tabular}{lccccccccccc}
\toprule
&   & NPTC & TC &  C & PC & FC & WC & MM & RC \\
\midrule
\multirow{4}{*}{\textbf{N=50}} 
&Normal & 5& 4&  4& 3& 5& 3 &6 & 0 &\\
&Uniform &5& 4& 5& 3& 4& 3& 4    &0 & \\
&Exponential & 5 & 0& 0 & 0 & 0& 0 & 0 & 0 & \\
&Cauchy & 5& 100& 100 &100 &100 &100 &100  &100 & \\
\midrule
\multirow{4}{*}{\textbf{N=100}} 
&Normal &  5& 5 &5& 5 & 5 & 3 & 6  & 0 & \\
&Uniform &  5& 4&6& 4 & 5 &4 &  5& 0 & \\
&Exponential & 5& 0  & 0  & 0 & 0 & 0 & 0  & 0 & \\
&Cauchy & 5& 100& 100 & 100 & 100 & 100 & 100  & 100 & \\
\midrule
\multirow{4}{*}{\textbf{N=200}} 
&Normal & 6 & 4 &  6& 4&5 &4 & 5 & 0& \\
&Uniform & 6& 5&  5& 4& 4 &5 & 4 & 0& \\
&Exponential & 6& 0& 0 &0 &0 & 0 & 0  & 0 & \\
&Cauchy & 6 & 100 &100 & 100& 100& 100& 100 & 100& \\
\bottomrule
\end{tabular}
\caption{\it Empirical rejection rates (in per cent) under the null hypothesis of no change for the different monitoring procedures. 
}
\label{tab:level}
\end{table}

\subsubsection{Delay times for large changes}

We next study the delay times of the different procedures in the presence of a change ($H_1$). Throughout, we set $N=100$, $T=2000$, and consider change point locations $k^\star \in \{1,4N, 10N, 16N\}$. The size of the change is fixed at $\Delta=2$, which is relatively large. There are two reasons for focusing on a large change: first, short detection delays are only possible when the change is sufficiently large; second, and more importantly, meaningful comparisons of delay times require that the methods being compared have similar detection power. It is not meaningful to compare the delay of a powerful method A with that of a powerless method B. Since the methods under consideration differ substantially in power (with our new methods typically the most powerful), we restrict attention to settings where all methods have empirical power close to one. Smaller changes are investigated separately in the next section.  
Delay times are displayed as box plots in Figure \ref{fig:N=100,k=16N}, with median delay times in parentheses. For brevity, we present results only for normally distributed errors; findings for uniform and exponential errors are similar and therefore omitted. Cases where methods erroneously detect a change before it occurs are discarded.  For our simulations we have specifically searched for cases where existing methods are competitive, but the only one we were able to find is $k^\star=1$, i.e. if a change occurs exactly when the monitoring period begins. We have included the results in the upper left corner of Figure \ref{fig:N=100,k=16N}. Here TC is outperformed by existing methods by a very thin margin. Of course a change occurring right when monitoring begins is a very lucky coincidence, and the fact that monitoring procedures are developed for infinite time horizons in theory (see \eqref{e:H0X}-\eqref{e:H1X}) implies that more general change point locations are of main interest. 

\begin{figure}[H] 
  \centering
  \includegraphics[width=0.48\textwidth]{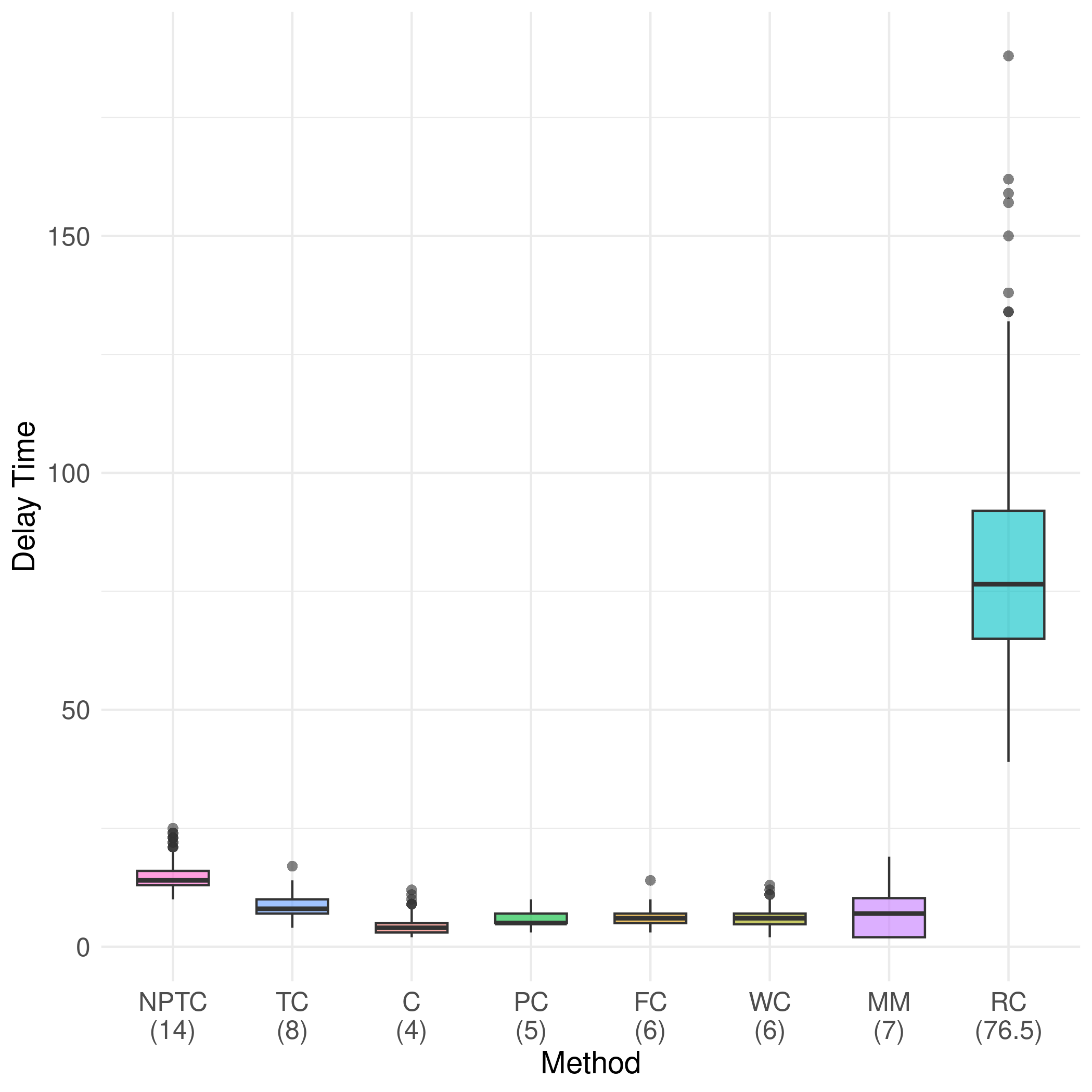}
  \includegraphics[ width=0.48\textwidth]{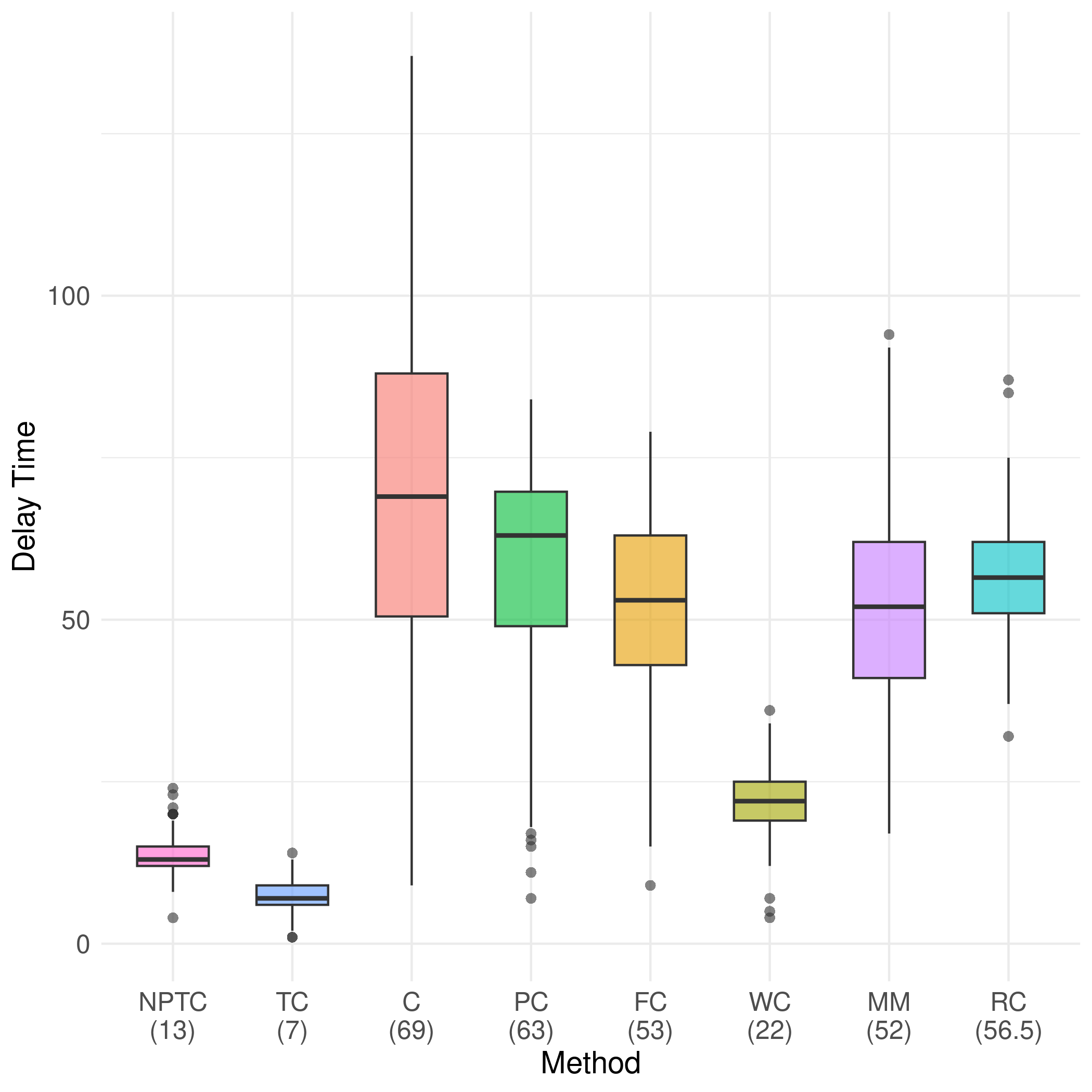}
  \includegraphics[width=0.48\textwidth]{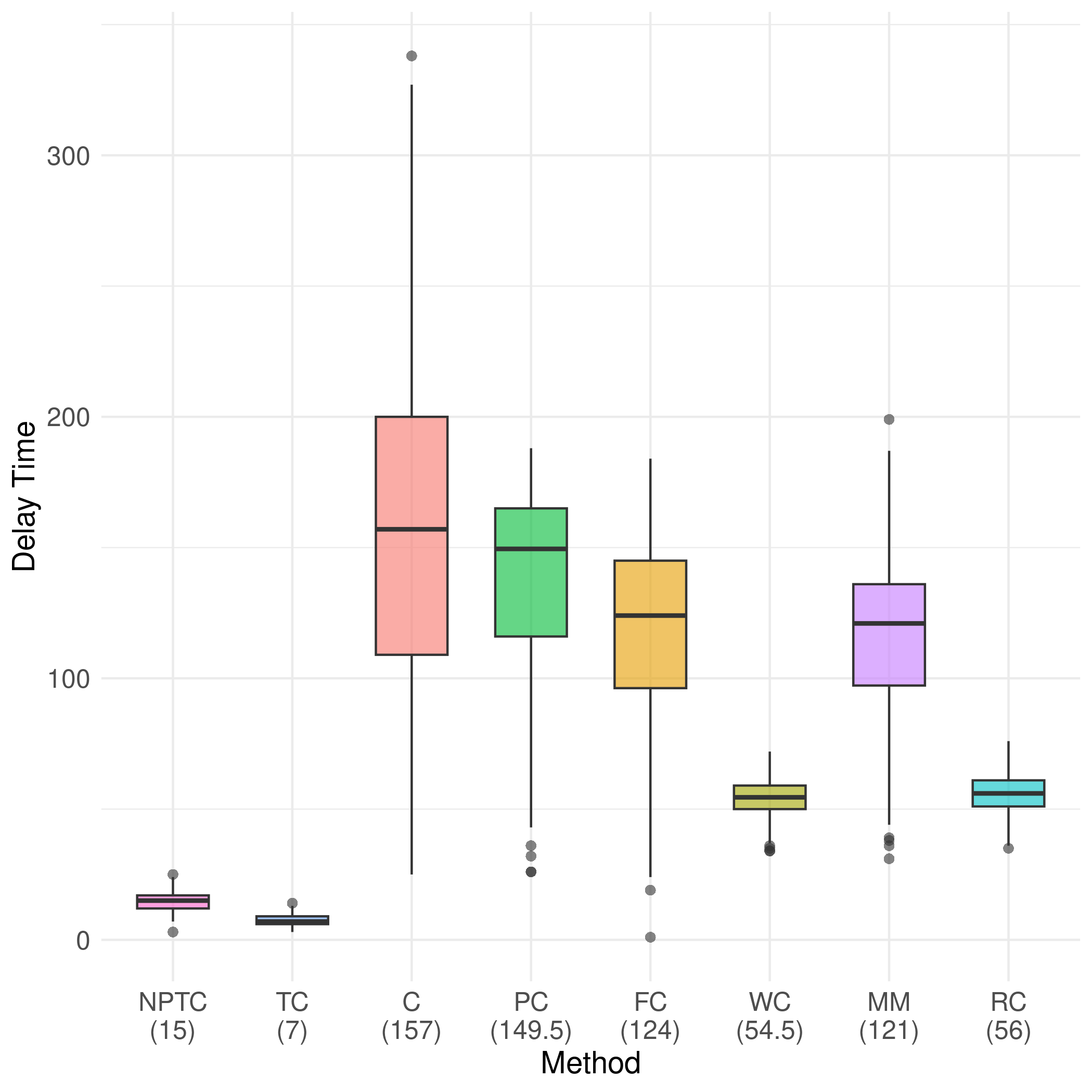}
  \includegraphics[width=0.48\textwidth]{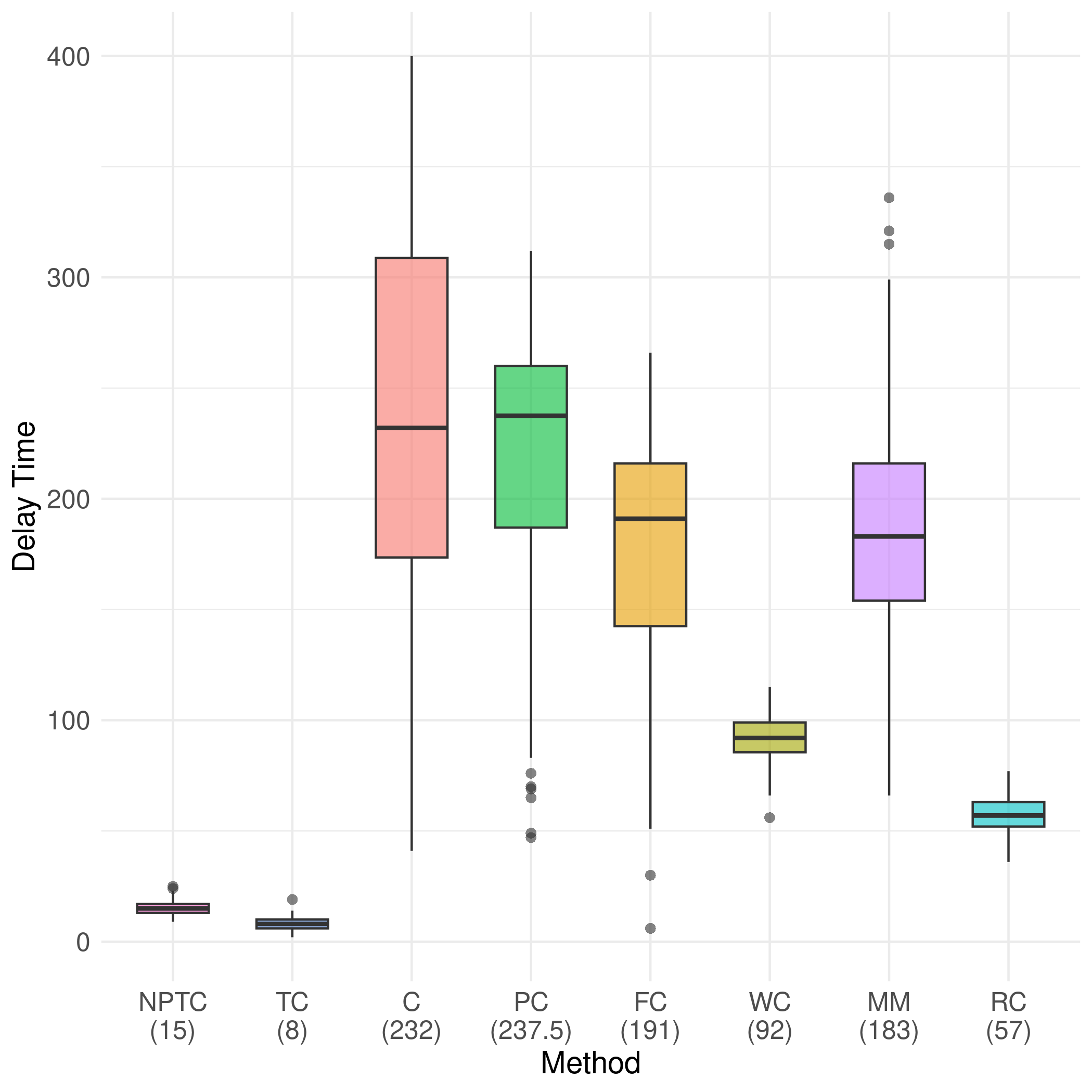}
  \caption{Detection delays displayed as box plots for normally distributed model errors and training period $N=100$. The change magnitude is $\Delta=2$ and a range of change point location $k^\star$ are considered. To be precise we have  $k^\star=1$ (upper left), $k^\star=400$ (upper right), $k^\star=1000$ (lower left), $k^\star=1600$  (lower right).
  Numbers in brackets correspond to the median delay.  
  }
  \label{fig:N=100,k=16N}
\end{figure}

Yet, in all other situations we have investigated, TC and NPTC outperform all benchmark methods by a very wide margin.
WC and the infeasible method RC come closest but are still outperformed by factors of two to three. For late changes ($k^\star = 16N$), the new procedures outperform all feasible state-of-the-art methods by a factor of six to thirty. The reason is that TC and NPTC maintain the very short detection delays for later changes that they have for early changes. In contrast, existing methods deteriorate quickly. This empirical superiority reflects our theoretical results, which are considerably stronger than those available in the related literature; see also our discussion after Corollary \ref{cor:power:delay}. 
In practice, changes need not be permanent but may persist only for a limited duration. Complementary to the above discussion of how long it takes until a change is detected, we now investigate the detection rate for a transient change. This corresponds to the so-called \textit{epidemic change point} model, given by
\begin{align}
    X_i = \varepsilon_i + \Delta 1\{N + k^\star \le i \le N + k^\star + DN\}, 
    \quad i = 1, \ldots, N+T,
\end{align}
where $D > 0$ denotes the duration of the change relative to the training sample.  
To focus the discussion, we consider one setting: $k^\star = 4N$, $N = 100$, $\Delta = 2$, with normally distributed errors, and study values of $D \in [0,1]$. The resulting power curves are displayed in Figure \ref{fig:shortchange}. 
We find that TC detects changes almost always, even for extremely short-lived changes with duration $D \approx 0.1N$.

\begin{figure}[H] 
  \centering
  \includegraphics[width=0.7\textwidth]{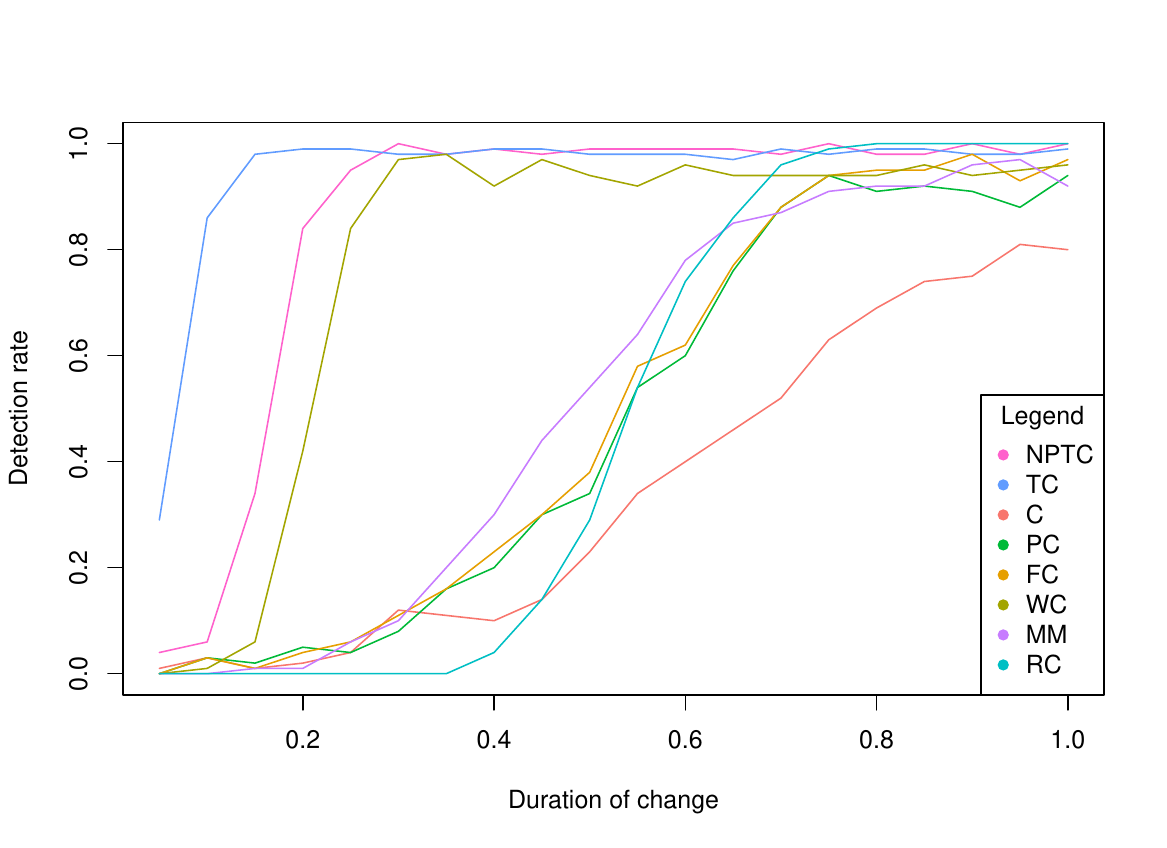}  
  \caption{Rejection rates as a function of the duration $D$ of the change. Parameters are $N=100$, $k^\star=4N$, $\Delta=2$ and  errors follow a standard normal distribution.}
  \label{fig:shortchange}
\end{figure}

NPTC and WC require slightly longer durations but still detect changes that persist for only $D \approx 0.2N$. Despite the large change size, all other methods require durations close to $D=N$, i.e. the full length of the training period, for reliable detection. Moreover, since $k^\star = 4N$ represents a relatively early change, the performance of existing methods would presumably degrade much further still, if we had considered later changes such as $k^\star = 16N$. This is because their delays are growing linearly in the change location (see Section \ref{sec:lit}).
In contrast, our new methods are much more robust to the location of $k^\star$ (delays increase only logarithmically). This is obviously a desirable property for open-ended monitoring.

\subsubsection{Power for small changes}
Finally, we investigate the power of the different methods for small changes. We fix the simulation parameters at $k^\star = 4N$, $N = 100$, $T = 20N$, and vary the change size $\Delta \in \{0.15, 0.25, 0.35\}$. Note that $k^\star = 4N$ corresponds to a moderately early change. This choice is favorable to the feasible state-of-the-art competitors, whose performance (as we observed in additional, non-reported simulations) deteriorates further for larger $k^\star$. For the method RC, later changes would theoretically be more favorable (as for our methods). However, since RC turns out to be empirically powerless against small changes, the precise choice of $k^\star$ does not matter.

\renewcommand{\arraystretch}{1}
\begin{table}[H]
\centering
\small
\setlength{\tabcolsep}{8pt} 
\begin{tabular}{lccccccccccc}
\toprule
&   & NPTC & TC  & C & PC & FC & WC & MM & RC \\
\midrule
\multirow{3}{*}{\shortstack{  N=100\vspace{0.2cm} \\  $\Delta$=0.15  \vspace{0.2cm} \\ 
$k^\star=4N$}} 
&Normal & 42 & 52  &12& 11 & 14 &  6 & 10 &   0 \\
&Uniform & 21 & 52 &12  &11  &16 & 6& 10 &  0 \\
&Exponential & 99 & 10   &1 & 1 & 1 & 0 & 0& 0\\
\midrule
\multirow{3}{*}{\shortstack{  N=100\vspace{0.2cm} \\  $\Delta$=0.25  \vspace{0.2cm} \\ 
$k^\star=4N$}} 
&Normal & 85 & 96  &30 & 27 & 47 & 18 &  29 &      0\\
&Uniform & 64 & 95   &30 & 31 &  49 & 19 & 26 &   0 \\
&Exponential & 100 & 93 & 17 &  17 &  21 &   1 & 11 &    0 \\
\midrule
\multirow{3}{*}{\shortstack{  N=100\vspace{0.2cm} \\  $\Delta$=0.35  \vspace{0.2cm} \\ 
$k^\star=4N$}} 
&Normal &  100 &  100 &61 &  59 &  90 &  55 &  67 &    0\\
&Uniform & 96   & 100  &59 &  58 &  91 &  56 &  65 &    0\\
&Exponential & 100 & 100  &61 &  59 &  90 & 26 &  67&    0\\
\bottomrule
\end{tabular}
\caption{\it Empirical rejection rates under \eqref{e:H1X} for the different monitoring procedures. 
}
\label{tab:power:small}
\end{table}

Table \ref{tab:power:small} shows that our new methods TC and NPTC outperform their competitors by a wide margin. Among the state-of-the-art procedures, FC achieves the highest power, while PC and WC are the weakest. The infeasible method RC, which yielded short detection delays in the previous section, is completely powerless here, with rejection rates below the targeted nominal level.
Among our methods, TC performs best on normal and uniform data, while NPTC is superior for exponential data. This finding is consistent with Table \ref{tab:level}, where NPTC displayed a more accurate level approximation than the more conservative TC.  
In conclusion, our simulation study shows that the new methods proposed in this paper substantially outstrip existing state-of-the-art competitors with regard to both key criteria: short detection delays and high power.

\subsection{Application to epidemiological time series}

Real-time data analysis is crucial in public health contexts, where both high statistical certainty and fast detection are necessary to allow for well-informed and swift policy responses. The procedures developed in this paper satisfy both criteria: they have a hypothesis testing interpretation and thus come with guaranteed bounds on the false positive rate, just like any retrospective test. Moreover, our methods are designed to have very short delays between the emergence of a signal and its detection. As we will see below, it is this property which makes them most advantageous in practical contexts, where detecting a change too late is just as problematic as missing it entirely. 

\textbf{Data description} As an illustration, we apply our new methodology to a dataset of COVID19 data, gathered over a period of 5 years, from 2020 to 2024 in Mexico. Of course, one cannot genuinely perform a real-time analysis on a historical dataset. However, we can make a counterfactual study that shows what results our monitoring procedure would have yielded if it had been available at the time. The dataset under investigation was obtained from \cite{DeArcosJimenez2025}, and we refer to that work for a precise description. The data consist of individual Ct (cycle threshold) values of patients infected with COVID19. As explained in that work, Ct values are quantitative proxies for viral load in patients, with lower Ct values indicating higher viral load. Ct values are naturally bounded, since they assume positive values smaller than some maximum number of cycles used by the laboratory. Consequently, it is appropriate to think of them as compactly supported and in particular subgaussian. In the past, Ct values have been mostly used to measure the infectiousness of individual patients \citep{shah:etal:2021}, but recent studies indicate that they contain valuable information at a population level \citep{dehesh:etal:2022}. For instance, drops in Ct values often predict subsequent rises in case counts, and changes in Ct distributions have also been associated with the emergence of new virus variants \citep{park:etal:2024}. For these reasons, \cite{DeArcosJimenez2025} conclude that “Integrating Ct monitoring into surveillance systems could enhance pandemic preparedness, improve outbreak forecasting, and strengthen epidemiological modeling.” We also note that the use of Ct values is not confined to COVID19 \citep{bentahar:2025} and could play a role in monitoring other epidemics.

\textbf{Details on the analysis} We use sequential change point tests to study Ct values from 1st May 2020 on. Earlier measurements are too scarce for a stable analysis. Measurements of all patients on a given day are summarized by taking the median. For sequential monitoring as considered in this work, we need a baseline period, and we take the month of May 2020 ($N=31$). The variance of the data is estimated by taking the empirical variance over the monitoring period. We note that a goodness-of-fit test failed to reject the hypothesis of white noise in the training sample. To evaluate the performance of our procedure and compare with existing benchmarks, we run all monitoring procedures listed in Table \ref{tab:methods} on the data. The RC procedure by \cite{yu:madridpadilla:wang:rinaldo:2024} requires the unknown Orlicz norm of the noise and therefore, strictly speaking, cannot be applied. To nevertheless include it in our benchmarks, we calculate a proxy for the Orlicz norm assuming normally distributed noise. The assumption of normality is empirically violated, but without it, \cite{yu:madridpadilla:wang:rinaldo:2024} appears entirely inapplicable. \\
A subset of procedures also entail a natural estimate of the change point $k^\star$, namely $NPTC$, $TC$, $PC$, $FC$, $WC$, and $RC$, where we define
\[
  \hat k^\star := \underset{1 \leq \ell \leq k}{\arg\max} \ \omega(\ell,\hat k)\gamma(\ell,\hat k)~.
\]
We plot the data, times of detection and, if applicable, estimated change point of each procedure in Figure \ref{fig:covid}.

\begin{figure}[H]
  \centering
  \includegraphics[width=0.8\textwidth]{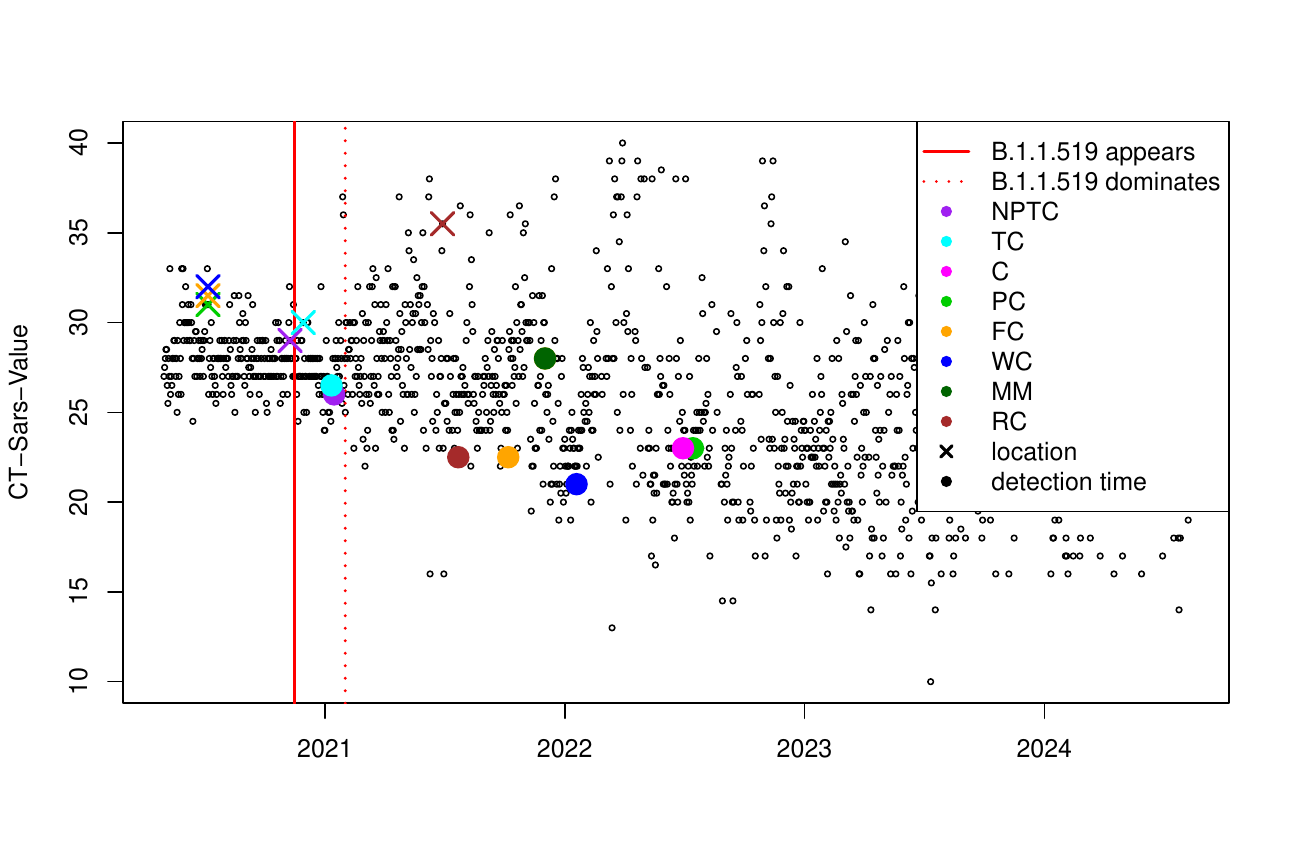}  
  \caption{Detection times (circles) and estimated changes (crosses) for daily median Ct values. Data from \cite{DeArcosJimenez2025}. The red vertical lines indicate when the strain B.1.1.519 first emerges (solid) and then becomes dominant (dashed).}
  \label{fig:covid}
\end{figure}

\textbf{Results and interpretation} To interpret our statistical findings, some additional background on the COVID pandemic in Mexico is needed. \cite{Taboada2021} describe the genetic evolution of COVID variants in Mexico between February 2020 and March 2021. For our purposes, a key observation is that there was no dominant strain of the virus until November 2020, when the B.1.1.519 lineage started appearing. Over the next three months, this variant became dominant, representing roughly 70\% of all cases at the beginning of February 2021. The first emergence and the first time of dominance for B.1.1.519 are displayed in Figure \ref{fig:covid} as vertical lines. As we can see, all monitoring procedures detect some change in Ct values sometime between the emergence of the variant and mid 2023. However, the delay times differ drastically. Our new test statistics TC and NPTC detect the change in infection patterns very soon after it emerges, and before B.1.1.519 becomes dominant in February 2021. Changes are dated back with high precision to the first known cases of the new variant. Among the remaining procedures, RC is the first to pick up the change, about 9 months after emergence, which is a delay roughly three times longer than that of our new procedures. RC is technically not ensured to hold the significance level, because we do not know the Orlicz norm of the noise. Therefore, in the rest of our analysis we turn to the feasible methods from the related literature. 
All feasible methods detect the change only about a year after it has occurred. This means that all of these methods have a very long delay - too long to contribute to a scientifically guided health policy. The most extreme delays are reached by the $PC$ and $C$ methods, which have delays of almost 2 years. These delays are so large that by the time $PC$ and $C$ detect a change, the majority of the Mexican population had already been vaccinated for a long time. Most feasible methods also entail a change-point estimator, but the values cluster at the very beginning of the monitoring period. This behavior seems to lack  epidemiological plausibility.

\subsection*{Acknowledgments} 
This work has been partially
funded by AUFF grants 47331 and 47222 and  by the
Deutsche Forschungsgemeinschaft (DFG);  TRR 391 Spatio-temporal Statistics for the Transition of Energy and Transport (520388526). We gratefully acknowledge the use of a computer cluster funded by the Deutsche Forschungsgemeinschaft (DFG, German Research Foundation) under Germany's Excellence Strategy - EXC 2092 CASA - 390781972. We thank Professor Jaime Briseño for sharing the COVID data set with us.

\putbib
\end{bibunit}

\newpage

\appendix

\section{Proofs}

\begin{bibunit}

The Appendix is dedicated to the proofs of our main results, as well as to mathematical examples. Our methodology relies on several new and powerful probabilistic results pertaining to partial sum processes on an unbounded domain. The most notable of these is the derivation of bounds for the global modulus of continuity. Our results apply in both independent and dependent settings, relying only on the assumption of uniformly subgaussian increments of the process, as shown in Lemma \ref{lem:finite:moc:proofs}.
Using classical isoperimetric inequalities and a generalized Garsia-Rodemich-Rumsey Lemma, we also provide extensions to sequential empirical processes. These new results are significant in the context of open-end multiscale monitoring, as they allow the decomposition of the test statistic into two parts. The first part consists of moderate scales over a finite (but growing) time horizon, where the test statistic essentially behaves like the maximum of a compactly supported Gaussian process. This part can be analyzed through non-trivial applications of existing strong approximations.
The second part includes all scales where Gaussian approximations fail -i.e., extremely short and extremely long scales. While these scales cannot be handled with previous Gaussian approximations, they can be uniformly controlled using our new global modulus of continuity. A key challenge is the (almost) maximal weighting of these scales, which is required to optimize detection delays and statistical power.

\textbf{Notational conventions} In the main body of this paper, we have followed standard conventions in sequential change point testing, in that we have defined detectors such as $\widehat \Gamma^{TC}(k)$ is using the data $X_1,...,X_{N+k}$ (the $k$th time point in the monitoring period). While this notation is generally reader-friendly it is fairly burdensome in our proofs and therefore, we use a different, more mathematically fruitful notation here. In the Appendix, $k$ refers to the current time, where data $X_1,...,X_{k}$ are available and the potential change point is located at the observation $X_{k^\star}$, i.e. $k^\star>N$.  In particular, we write for $k>N$
\begin{align*}
    \widehat \Gamma^{TC}(k)=\max_{\substack{1 \leq \ell \leq k/2 \\ k-\ell \geq N}}\omega^{TC,\beta}(\ell,k)\hat \gamma^{TC}(\ell,k)
\end{align*}
where
\begin{align*}
    \hat \gamma^{TC}(\ell,k)=\Big|\min(1, \ell/N) \cdot S_{\max(\ell,N)}-(S_k-S_{k-\ell})\Big|\\
     w^{TC, \beta}(\ell, k)  :=\ell^{-1/2} \log^{-\beta}(C_0+N/\ell)\log^{-\beta}(C_0+k/N).
\end{align*}
The advantage of this approach is that it allows natural rescalings between discrete and continous time (where $t=k/N$ and $s=\ell/N$) making expressions more concise.

\subsection{Proofs for TC}
We recall the definition of the partial sum process $P_N$ in \eqref{eq:def:PN} and further denote, for $\beta>1/2$, the modulus function
\[
\rho_\beta(x)= x^{1/2}\log^\beta(C_0+1/x).
\]

\subsubsection{Modulus of continuity results}

\begin{lem}
\label{lem:finite:moc:proofs}
    Let $(G(s))_{s \in [0,\infty)}$ be a stochastic process satisfying
    \begin{align}
    \label{eq:subg:condition}
        \sup_{s,t\geq 0}\frac{\|G(s)-G(t)\|_{\Psi_2}}{\sqrt{|t-s|}}\leq K~.
    \end{align}
    Further let $\delta>0$ be arbitrary and define the random variable
    \begin{equation*}
    M_{G} := \sup_{\substack{0< s<t<\infty, \\ h=|t-s|}} \frac{|G(t)-G(s)|}{\sqrt{h \left(1+ \log(\frac{t}{h}) + \delta |\log (t)| \right)}}.
    \end{equation*}
    Then, $M_{G}^{2}$ admits exponential moments, and in particular they are bounded by some constant depending only on $K$. We also note that \eqref{eq:subg:condition} is always satisfied for partial sum processes of random variables that are independent and uniformly subgaussian (see Proposition 2.7.1 in \cite{vershynin:2018}).
\end{lem}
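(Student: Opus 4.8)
The guiding idea is that the scale invariance of hypothesis \eqref{eq:subg:condition} reduces the global statement to a \emph{single} local estimate, applied on a multiresolution family of cells, whose contributions are then collected by a union bound; the two logarithmic terms in the denominator of $M_G$ are precisely what makes the resulting series converge. The first ingredient is the observation that for every interval $[a,a+r]\subseteq[0,\infty)$ the rescaled process $v\mapsto r^{-1/2}\big(G(a+rv)-G(a)\big)$, $v\in[0,1]$, again satisfies \eqref{eq:subg:condition} with the \emph{same} constant $K$, since the condition only involves the ratio $\|G(s)-G(t)\|_{\Psi_2}/\sqrt{|t-s|}$. Hence a standard maximal inequality for processes with subgaussian increments on a unit interval --- via Dudley's entropy bound together with a chaining tail estimate, or equivalently via the classical (generalized Garsia--Rodemich--Rumsey) finite modulus of continuity bound --- yields constants $c_0,C_0>0$ depending only on $K$ with
\[
\mathbb{E}\exp\!\Big(c_0\, r^{-1}\sup_{a\le v\le a+r}\big|G(v)-G(a)\big|^2\Big)\le C_0
\qquad\text{uniformly in } a\ge 0,\ r>0,
\]
and in particular $\mathbb{P}\big(\sup_{a\le v\le a+r}|G(v)-G(a)|>z\sqrt r\big)\le C_0 e^{-c_0 z^2}$.

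For the multiresolution step, fix a pair $0<s<t$ (assuming, as is standard, that $G$ is separable so that all suprema are measurable), put $h:=t-s$, $p:=\lfloor\log_2 h\rfloor$ (so $2^p\le h<2^{p+1}$) and $k:=\lfloor s/2^p\rfloor\ge 0$, and set $a:=k2^p\le s$. Then $s,t\in[a,a+4\cdot 2^p]$, so
\[
|G(t)-G(s)|\le |G(t)-G(a)|+|G(s)-G(a)|\le 2\sup_{a\le v\le a+4\cdot 2^p}\big|G(v)-G(a)\big|.
\]
Simultaneously, elementary bookkeeping from $2^p\le h<2^{p+1}$ and $a\le s<t<a+4\cdot2^p$ gives $h\asymp 2^p$ and $t\asymp(1+k)2^p$, whence each of the three terms $1$, $\log(t/h)$ and $\delta|\log t|$ is bounded below, up to a multiplicative constant depending only on $\delta$, by the corresponding term of
\[
\psi_{p,k}:=1+\log(1+k)+\delta\,\big|p\log 2+\log(1+k)\big|,
\]
so that $h\big(1+\log(t/h)+\delta|\log t|\big)\ge c_1\,2^p\,\psi_{p,k}$ for some $c_1>0$. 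Here the term $\log(t/h)$ is what produces the summand $\log(1+k)$, controlling the sum over the $\asymp t/h$ grid points at a fixed scale, while $\delta|\log t|$ produces $\delta|p\log 2+\log(1+k)|$, controlling the sum over scales $p$.

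Combining the two displays with the tail bound of the first step applied with $r=4\cdot2^p$, every pair $(s,t)$ witnessing $M_G>u$ forces the event $\big\{\sup_{[k2^p,k2^p+4\cdot2^p]}|G-G(k2^p)|>\tfrac u2\sqrt{c_1\,2^p\psi_{p,k}}\big\}$ for the corresponding $(p,k)$, so a union bound over the countable family of cells gives
\[
\mathbb{P}(M_G>u)\le \sum_{p\in\mathbb{Z}}\sum_{k\ge 0} C_0\exp\!\big(-c_0' u^2\psi_{p,k}\big),\qquad c_0':=\tfrac{c_0 c_1}{16}.
\]
It remains to check that for $u$ large this double series is $\le C_0'e^{-c_0'u^2}$: for each fixed $k$ the sum $\sum_{p\in\mathbb{Z}}e^{-c_0'u^2\delta|p\log2+\log(1+k)|}$ decays geometrically in $|p|$ with ratio $2^{-c_0'u^2\delta}$ and is therefore bounded by an absolute constant once $c_0'u^2\delta\ge 1$; then $\sum_{k\ge0}e^{-c_0'u^2(1+\log(1+k))}=e^{-c_0'u^2}\sum_{k\ge0}(1+k)^{-c_0'u^2}\le 2e^{-c_0'u^2}$ once $c_0'u^2\ge 2$. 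Thus $\mathbb{P}(M_G>u)\le C_0'e^{-c_0'u^2}$ for $u\ge u_0(K,\delta)$, and the layer-cake formula gives $\mathbb{E}\exp(\lambda M_G^2)\le C(K,\delta)$ for every $\lambda<c_0'$, which is the assertion. Finally, the stated verification of \eqref{eq:subg:condition} for the partial sum process $P_N$ of independent, uniformly subgaussian $\varepsilon_i$ is routine: $P_N(t)-P_N(s)$ is $N^{-1/2}$ times a sum of at most $N|t-s|+2$ independent, $C_\varepsilon$-subgaussian increments (two of them carried with a weight in $[0,1]$), hence subgaussian with proxy $\lesssim C_\varepsilon^2(|t-s|+1/N)$, while on a single linear-interpolation piece $\|P_N(t)-P_N(s)\|_{\Psi_2}=N^{1/2}|t-s|\,\|\varepsilon_1\|_{\Psi_2}\le C_\varepsilon\sqrt{|t-s|}$; together these give \eqref{eq:subg:condition} with $K\lesssim C_\varepsilon$ uniformly in $N$ (see \cite{vershynin:2018} for the underlying facts on sums of subgaussians).

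The main obstacle is the bookkeeping in the multiresolution step: one must pick the cells so that every pair $(s,t)$ is captured by exactly one cell of size comparable to $h$, so that at a fixed scale the cells form a grid of bounded overlap, and --- crucially --- so that the weight $h(1+\log(t/h)+\delta|\log t|)$ dominates $2^p\psi_{p,k}$ with $\psi_{p,k}$ large enough that the double series converges at rate $e^{-cu^2}$. Here it is essential that $\log(t/h)$ grows logarithmically in the number $\asymp t/h$ of grid points at each scale and that $\delta|\log t|$ grows linearly in the number of scales; without the $\delta|\log t|$ term the sum over $p$ would diverge. The local estimate of the first step is essentially classical --- it is the passage from a compact to a non-compact index set, powered by the scale invariance of \eqref{eq:subg:condition}, that is the point.
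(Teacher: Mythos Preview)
Your argument is correct and takes a genuinely different route from the paper's proof. The paper follows \cite{chevallier2023} and uses the Garsia--Rodemich--Rumsey machinery: it introduces the random integral $\xi_T=f_\delta(T)\iint_{[0,T]^2}\Psi(|G(x)-G(y)|/\mu(|x-y|))\,dx\,dy$ with a carefully chosen temporal weight $f_\delta$, takes $\xi=\sup_{T\in\mathbb N}\{\xi_T,\xi_{1/T}\}$, applies a GRR-type inequality to obtain the pathwise bound $M_G^2\le K^2\,128c\,(\log(4\xi+1)+32)$, and then verifies in a separate lemma that $\xi$ has finite $p$-th moments for $p<c$ (requiring $c\delta>1$), which yields the exponential moment. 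In contrast, you bypass the GRR integral entirely and work with a direct dyadic multiscale cover: a single scale-invariant local maximal inequality on each cell $[k2^p,(k+4)2^p]$, followed by a union bound whose double series is made summable precisely by the two logarithmic terms in the denominator of $M_G$.

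Your approach is more elementary and makes the role of each penalty term transparent: $\log(t/h)$ pays for the $\asymp t/h$ grid points per scale, and $\delta|\log t|$ pays for the sum over scales $p$. The paper's GRR route is less explicit on this point but has the advantage that it transfers almost verbatim to the Banach-valued setting needed for the sequential empirical process (the paper simply swaps Chevallier's Lemma~1 for the $C[0,1]$-valued GRR inequality in Friz--Victoir); your argument would also extend there, but one first needs the local $\Psi_2$-bound for $\|U_N(t,\cdot)-U_N(s,\cdot)\|_\infty$, which the paper establishes separately via the Talagrand-type concentration in van der Vaart--Wellner. One minor remark: both your constants $c_0'$ and $u_0$ depend on $\delta$ as well as on $K$, just as in the paper (where the choice $c>1/\delta$ enters through the auxiliary moment lemma); the wording ``depending only on $K$'' in the statement should be read as ``independent of $N$''.
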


\begin{proof}
    We essentially follow the proof of \cite{chevallier2023} for the case of the Brownian motion, clarifying some very minor issues along the way. We restrict ourselves to $0<\delta<1$ which also implies the result for all $\delta\geq 1$ by monotonicity. \\

    We will apply Lemma 1 from \cite{chevallier2023} with the choices
    \begin{align}
    \nonumber
        \Psi(x)&:=e^{x^2/2}-1,\\
        \mu(x)&:=K\sqrt{cx},\label{eq:def:mu}
    \end{align}
    where $c>1$ is some constant. The inverse of $\Psi$ is given by
    \[
        \Psi^{-1}(y)=\sqrt{2\log(y+1)},
    \]
    and the Lebesgue-Stieltjes measure $d\mu$ associated to $\mu$ satisfies
    \[
        d\mu(x)=\frac{K\sqrt{c}}{2\sqrt{x}}~.
    \]
    For a meaningful application of Lemma 1 from \cite{chevallier2023} we need to find a bound for 
    \[
        \int_0^t\int_0^t \Psi\left(\frac{|G(x)-G(y)}{\mu(|x-y|)}\right)dxdy~.
    \]
    To that end define the random variable
    \begin{align}
        \xi_T:=f_\delta(T)\int_0^T\int_0^T\Psi\left(\frac{|G(x)-G(y)}{\mu(|x-y|)}\right)dxdy,
    \end{align}
    where 
    \begin{align*}
        f_\delta(T):=\begin{cases}
            (1/T+1)^{2(1-\delta)} &\quad \text{if }T<1\\
            1 &\quad \text{if } T=1\\
            (T-1)^{-2(1+\delta)} &\quad \text{if } T>1
        \end{cases}
    \end{align*}
    diverges for $T \downarrow 0$ and vanishes for $T\uparrow \infty$. By some elementary calculations the following claims can be verified:
    \begin{itemize}
        \item[(1)] If $t \leq 1$ we have $\frac{1}{\lfloor 1/t \rfloor+1}<t\leq \frac{1}{\lfloor 1/t \rfloor}$ and $f_\delta(\frac{1}{\lfloor 1/t  \rfloor})\geq t^{-2(1-\delta)}$.
        \item[(2)] If $t\geq 1$ we have $\lceil t-1 \rceil<t\leq \lceil t \rceil $ and $f_\delta(\lceil t \rceil)\geq t^{-2(1+\delta)}$.
    \end{itemize}
   Next, we define the variable
    \[
        \xi=\sup\{\xi_T,\xi_{1/T}| T\in \N\}.
    \]
    Again by construction we obtain, for $t\geq 1$, that
    \[
        \int_0^t\int_0^t \Psi\left(\frac{|G(x)-G(y)}{\mu(|x-y|)}\right)dxdy\leq (f_\delta(\lceil t \rceil))^{-1}\xi_t\leq t^{2(1+\delta)}\xi
    \]
    and, for $t\leq 1$, that
    \[
        \int_0^t\int_0^t \Psi\left(\frac{|G(x)-G(y)}{\mu(|x-y|)}\right)dxdy\leq (f_\delta(1/\lfloor 1/t\rfloor^{-1}))^{-1}\xi_{1/\lfloor 1/t\rfloor}\leq t^{2(1-\delta)}\xi~.
    \]
    We then may apply Lemma 1 from \cite{chevallier2023} to obtain
    \begin{align*}
    |G(s)-G(t)|\leq 
        \begin{cases}
            8\int_0^{|s-t|}\Psi^{-1}(\frac{4t^{2(1+\delta)}\xi}{u^2})d\mu(u) \quad \text{if } t\geq 1\\
             8\int_0^{|s-t|}\Psi^{-1}(\frac{4t^{2(1-\delta)}\xi}{u^2})d\mu(u) \quad \text{if } t<1~.
        \end{cases}
    \end{align*}
    From here on, one may follow the arguments in the proof of \cite{chevallier2023} to arrive at
    \[
        M_G^2\leq K^2128c(\log(4\xi+1)+32))
    \]
    which yields the desired result by Lemma \ref{lem:finite:moment:xi}. To be precise we have
    \[
        \E[\exp(\lambda M^2)]\leq \exp(K^24096c\lambda)\E[(4\xi+1)^{128K^2c\lambda}],
    \]
    so that choosing a $\lambda$ satisfying
    \[
        1<128K^2c\lambda<c
    \]
    finishes the proof.

\end{proof}

\begin{lem}\label{lem:finite:moment:xi}
Recall $c$ as the constant in \eqref{eq:def:mu} and suppose that $c\delta>1$. Then, for any $p\in (1,c)$ we have $\mathbb{E}[\xi^{p}]< +\infty$. 
\end{lem}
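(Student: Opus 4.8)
I would deduce $\mathbb{E}[\xi^p]<\infty$ from a uniform moment estimate on the integrand together with a dyadic peeling of the index set. Since $\xi=\max\big(\xi_1,\ \sup_{T\in\mathbb{N},\,T\ge2}\xi_T,\ \sup_{T\in\mathbb{N},\,T\ge2}\xi_{1/T}\big)$ and $p\ge1$, it suffices to bound the $p$-th moments of the three pieces separately. The starting point is that for $h=|x-y|$ the normaliser $\mu(h)=K\sqrt{ch}$ is tuned to the $\sqrt h$-scaling of the increments in \eqref{eq:subg:condition}: indeed $\|(G(x)-G(y))/\mu(h)\|_{\Psi_2}\le 1/\sqrt c$ uniformly in $x,y$, so $\Psi\big((G(x)-G(y))/\mu(h)\big)=\exp\big((G(x)-G(y))^2/(2\mu(h)^2)\big)-1$ has a polynomial upper tail of exponent $2c$, and hence
\[
\mathbb{E}\Big[\Psi\big((G(x)-G(y))/\mu(|x-y|)\big)^{p}\Big]\le 2\qquad\text{for all }x,y\ge0\text{ and all }p\le c .
\]
Setting $I(T):=\int_0^T\int_0^T\Psi\big((G(x)-G(y))/\mu(|x-y|)\big)\,dx\,dy$, Minkowski's integral inequality upgrades this to $\mathbb{E}[I(T)^p]\le 2\,T^{2p}$ for every $T>0$ (in particular $I(T)<\infty$ a.s., so each $\xi_T=f_\delta(T)I(T)$ is a genuine random variable). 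Since $f_\delta(1)=1$, the piece $\xi_1$ is already under control: $\mathbb{E}[\xi_1^p]\le2$.

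\textbf{Dyadic peeling of the scales.} For the large scales I would group the integers into blocks $B_j=\{T\in\mathbb{N}:2^j\le T<2^{j+1}\}$, $j\ge1$. On $[2^j,\infty)$ the function $f_\delta$ is decreasing while $I$ is increasing, so for every $T\in B_j$ one has the deterministic-weight bound $\xi_T\le f_\delta(2^j)\,I(2^{j+1})$; taking $p$-th expectations and using $f_\delta(2^j)\asymp 2^{-2j(1+\delta)}$ together with $\mathbb{E}[I(2^{j+1})^p]\le 2\,(2^{j+1})^{2p}$ gives
\[
\mathbb{E}\Big[\sup_{T\in B_j}\xi_T^{\,p}\Big]\;\lesssim\;2^{-2pj(1+\delta)}\cdot 2^{2pj}\;=\;2^{-2pj\delta},
\]
whose sum over $j\ge1$ is a convergent geometric series; this bounds $\mathbb{E}[\sup_{T\ge2}\xi_T^{\,p}]$. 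For the small scales I would group $\{1/T:T\in\mathbb{N},\,T\ge2\}$ by which dyadic interval $(2^{-j-1},2^{-j}]$ contains $1/T$, i.e. again by $T\in B_j$. On $(0,1)$ the function $f_\delta$ is monotone and $I$ is increasing, so $\xi_{1/T}$ is bounded, up to an absolute constant, by $f_\delta(2^{-j})\,I(2^{-j})$ for all $T\in B_j$; since $f_\delta(2^{-j})\asymp 2^{2j(1-\delta)}$ and $\mathbb{E}[I(2^{-j})^p]\le2\cdot2^{-2pj}$, the same computation yields $\mathbb{E}[\sup_{T\in B_j}\xi_{1/T}^{\,p}]\lesssim 2^{-2pj\delta}$, again summable. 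Adding the three contributions gives $\mathbb{E}[\xi^p]<\infty$.

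\textbf{Role of the hypotheses and the main difficulty.} The assumption $p<c$ is exactly what makes the uniform bound on $\mathbb{E}[\Psi(\cdot)^p]$ available (more generally it stays finite for $p<2c$); the assumption $c\delta>1$ — in conjunction with the normalisation $0<\delta<1$ that may be imposed without loss of generality, as in the proof of Lemma \ref{lem:finite:moc:proofs} — is what ensures the per-scale bounds assemble into convergent series, and in particular makes even a cruder term-by-term estimate $\sum_{T}\mathbb{E}[\xi_T^p]\lesssim\sum_T T^{-2p\delta}<\infty$ go through in the regime $p\asymp c$ relevant for Lemma \ref{lem:finite:moc:proofs}. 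I expect the only genuinely delicate point to be the uniform moment bound for $\Psi\big((G(x)-G(y))/\mu(h)\big)$: one has to see that the $\sqrt h$ in the denominator $\mu(h)=K\sqrt{ch}$ cancels the $\sqrt h$-growth of the increment's Orlicz norm, leaving a subgaussian variable whose $\Psi_2$-norm is of order $1/\sqrt c$, small enough that $e^{W^2/2}-1$ possesses strictly more than $c$ finite moments — and, crucially, that this bound holds uniformly over the whole square $[0,T]^2$, which is what allows Minkowski's inequality to deliver the clean estimate $\mathbb{E}[I(T)^p]\le 2\,T^{2p}$ rather than something that blows up near the diagonal.
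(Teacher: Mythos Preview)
Your argument is correct but follows a genuinely different route from the paper. Both approaches start from the same core moment bound $\mathbb{E}\big[\exp\big(\tfrac{p}{2c}(G(x)-G(y))^2/(K^2|x-y|)\big)\big]\le 2$ for $p<2c$, which the paper obtains via Jensen's inequality on the normalised double integral and you obtain via Minkowski's integral inequality; these give the same estimate $\mathbb{E}[I(T)^p]\le 2T^{2p}$. The divergence comes after that: the paper does a term-by-term union bound over $T\in\mathbb{N}$, which forces the introduction of an auxiliary exponent $q\in(p,c)$ so that Markov's inequality yields tails decaying like $n^{-q/p}$ with $q/p>1$, and the sum $\sum_T g(T)\lesssim\sum_T T^{-2\delta q}$ converges once $q$ is chosen close enough to $c$ (this is exactly where the hypothesis $c\delta>1$ enters). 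Your dyadic peeling sidesteps the auxiliary exponent entirely: by exploiting monotonicity of $f_\delta$ and $I$ on each block $B_j$ you replace $\sim 2^j$ individual terms by a single one, and the resulting geometric series $\sum_j 2^{-2pj\delta}$ converges for every $p,\delta>0$. In that sense your proof is slightly sharper --- it does not actually require $c\delta>1$, a point you half-acknowledge in your closing remarks --- whereas the paper's approach is more in the spirit of classical maximal inequalities. One small clarification: your sentence ``the assumption $c\delta>1$ \ldots\ is what ensures the per-scale bounds assemble into convergent series'' is not quite accurate for your own dyadic argument (it converges regardless), though it is precisely the mechanism in the paper's term-by-term version.
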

\begin{proof}
    Let $q \in (1,c)$ and $p \in (1,q)$. For $T\geq 1$ we may apply Jensen's inequality to obtain
    \begin{align}
      \mathbb{E}\left[\xi_{T}^{q}\right]
     =& 
   f^{q}_{\delta}(T) \, \mathbb{E}\left[\left(\int_{0}^{T} \int_{0}^{T} \exp\left(\frac{|G(t)-G(s)|^{2}}{2cK^2|t-s|}\right) dsdt\right)^{q}\right]
    \\
     \leq&  f^{q}_{\delta}(T)\,  T^{2(q-1)} \mathbb{E}\left[\int_{0}^{T} \int_{0}^{T} \exp\left(\frac{|G(t)-G(s)|^{2}}{2cK^2|t-s|}\right)^{q} dsdt\right]
    \\
      = &\frac{T^{2q}}{(T-1)^{(2+2\delta)q}} \, \int_{0}^{T}  T^{-2}\int_{0}^{T} \mathbb{E}\left[\exp\left(\frac{q}{2c}\left(\frac{|G(t)-G(s)|}{K\sqrt{|t-s|}}\right)^{2}\right)\right] dsdt. \label{e:int}  
    \end{align}

By assumption we have 
\[
    \|G(s)-G(t)\|_{\Psi_2}\leq K\sqrt{|t-s|}
\]
as well as $q<c<2c$, so that by the definition of $\|\cdot\|_{\Psi_2}$ we obtain
\[
    \mathbb{E}\left[\exp\left(\frac{q}{2c}\left(\frac{|G(t)-G(s)|}{K\sqrt{|t-s|}}\right)^{2}\right)\right]\leq 2.
\]
Consequently, the double integral in \eqref{e:int} is bounded by 2, and we obtain from the chain of inequalities leading up to it that
\[
    \E[\xi^q_T]\leq \frac{2T^{2q}}{(T-1)^{(2+2\delta)q}}.
\]
Similar arguments for the case $T<1$ establish that 
\[
    \E[\xi^q_{1/T}]\leq \frac{2(T+1)^{(2-2\delta)q}}{T^{2q}}~.
\]
Defining
\[
    g(T):=\frac{2T^{2q}}{(T-1)^{(2+2\delta)q}}+\frac{2(T+1)^{(2-2\delta)q}}{T^{2q}}
\]
we observe that 
\begin{align}
\label{e:g:bound}
    g(T)\lesssim T^{-2\delta q}
\end{align}
and then apply Markov's inequality to obtain
\begin{align*}
    \PR(\max(\xi^p_T,\xi^p_{1/T})>n)\lesssim g(T)n^{-q/p}.
\end{align*}
The union bound in combination with summability of $g(T)$ over $T$ (choose $q$ close to $c$ and observe that $c\delta>1$ by assumption) then yields
\[
    \PR(\xi^p>n)\lesssim \sum_{T=1}^\infty g(T)n^{-q/p}\lesssim n^{-q/p},
\]
which in turn implies
\[
    \E[\xi^p]\leq \sum_{n=0}^\infty \PR(\xi^p>n)\lesssim \sum_{n=0}^\infty n^{-q/p}<\infty
\]
as desired.
\end{proof}

\subsubsection{Derivation of distributional limit}

The next lemma provides a technical tool to show that in our monitoring, "late times" (large $t$) do not contribute to the limiting distribution under the null hypothesis.

\begin{lem}
\label{lem:cutoff} 
    Let $B$ be a centered Brownian motion and $P_N$ be the partial sum process of model errors. Suppose that $P_N$ satisfies 
    \[
        \sup_{N \geq 1}\sup_{s,t \geq 0}\frac{\|P_N(s)-P_N(t)\|_{\Psi_2}}{\sqrt{|t-s|}}\leq K~.
    \] Then it holds, for any $\eta>0$, that
    \begin{align*}
        \sup_{N^{\eta} \leq t}\sup_{\substack{0 \leq s \leq t/2\\ t-s \geq 1 }}\frac{|\min(1,s)B(\max(1,s))-(B(t)-B(t-s))|}{\rho_\beta(s)\log^\beta(C_0+t)}=o_\PR(1),            
    \end{align*}
    as well as
    \begin{align*}
       \sup_{N^\eta \leq t}\sup_{\substack{0 \leq s \leq t/2\\ t-s \geq 1 }}\frac{|\min(1,s)P_N(\max(1,s))-(P_N(t)-P_N(t-s))|}{\rho_\beta(s)\log^\beta(C_0+t)}=        o_\PR(1).
    \end{align*}    
\end{lem}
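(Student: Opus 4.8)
The plan is to control both numerators by a constant multiple of the modulus‑of‑continuity variable $M_B$ (respectively $M_{P_N}$) from Lemma~\ref{lem:finite:moc:proofs}, then divide by $\rho_\beta(s)\log^\beta(C_0+t)$ and show that what remains is at most $M_B\,g_N$ for a deterministic sequence $g_N=g_N(\eta,\beta,C_0)$ with $g_N\to 0$. Since a Brownian motion has Gaussian, hence uniformly subgaussian, increments, and since $P_N$ satisfies the displayed hypothesis with one constant $K$ valid for all $N$, Lemma~\ref{lem:finite:moc:proofs} yields $\E\exp(M_B^2)<\infty$ and $\sup_N\E\exp(M_{P_N}^2)<\infty$; in particular $M_B=O_\PR(1)$ and $M_{P_N}=O_\PR(1)$ uniformly in $N$, so $M_B\,g_N=o_\PR(1)$ and $M_{P_N}\,g_N=o_\PR(1)$, which is exactly the two displays.

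For the numerator I would use the triangle inequality to separate the summands $|\min(1,s)B(\max(1,s))|$ and $|B(t)-B(t-s)|$. From $B(0)=0$, continuity of $B$, and the definition of $M_B$ (taking the left endpoint down to $0$) one obtains $|B(r)|\le M_B\sqrt{r\,(1+\delta|\log r|)}$ for every $r>0$; evaluated at $r=\max(1,s)$ and combined with $\min(1,s)\le 1$ this handles the first summand, while the second is bounded directly by $|B(t)-B(t-s)|\le M_B\sqrt{s\,(1+\log(t/s)+\delta\log t)}$, which is legitimate since $t>1$ and $0<t-s<t$ on the relevant index set. I would then distinguish $s\ge 1$ from $s<1$. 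When $s\ge 1$ one has $\rho_\beta(s)\asymp s^{1/2}$ and $\log(t/s)\le\log t$, $|\log s|\le\log t$, so after dividing by $\rho_\beta(s)\log^\beta(C_0+t)$ the ratio is $\lesssim(\log t)^{1/2}/\log^\beta(C_0+t)\asymp(\log t)^{1/2-\beta}$. When $s<1$ the factor $\log^\beta(C_0+1/s)$ hidden in $\rho_\beta(s)$ is what must absorb the enlargement $\log(t/s)=\log t+\log(1/s)$: using $\sqrt{a+b}\le\sqrt a+\sqrt b$, one resulting term is again $\lesssim(\log t)^{1/2-\beta}$, and the other is $\lesssim\log^{-\beta}(C_0+t)\cdot\sup_{0<s<1}\sqrt{\log(1/s)}/\log^\beta(C_0+1/s)$, the supremum being finite precisely because $\beta>1/2$. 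Since the expressions $(\log t)^{1/2-\beta}$ and $\log^{-\beta}(C_0+t)$ are decreasing in $t$, the worst admissible value is $t=N^\eta$, so in all cases the bound has the form $M_B\,g_N$ with $g_N:=C\big[(\eta\log N)^{1/2-\beta}+\log^{-\beta}(C_0+N^\eta)\big]$.

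Finally, $\beta>1/2$ and $\eta\log N\to\infty$ give $g_N\to 0$, which finishes the first display; the second is obtained by repeating the computation verbatim with $(B,M_B)$ replaced by $(P_N,M_{P_N})$, the uniformity in $N$ being guaranteed by the hypothesis on $P_N$. I expect the main difficulty to be the bookkeeping in the range $s<1$: one has to check that the weight $\log^\beta(C_0+1/s)$ exactly offsets the logarithmic growth of the effective increment length coming from the global modulus of continuity, uniformly over all admissible pairs $(s,t)$ — and this is the one place where $\beta>1/2$ is genuinely needed.
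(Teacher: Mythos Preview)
Your proposal is correct and follows essentially the same route as the paper: both arguments invoke Lemma~\ref{lem:finite:moc:proofs} to bound the increment by $M\sqrt{s(1+\log(t/s)+\delta|\log t|)}$, split this into a $\sqrt{\log t}$ part and a $\sqrt{|\log s|}$ part, and use $\beta>1/2$ so that after division by $\rho_\beta(s)\log^\beta(C_0+t)$ the remaining deterministic factor is dominated by $(\log t)^{1/2-\beta}\to 0$ for $t\ge N^\eta$. The only difference is organizational: you treat both summands in the numerator explicitly via the triangle inequality and bound the ratio directly, whereas the paper only writes out the increment term $|P_N(t)-P_N(t-s)|$, first shows it is $O_\PR(1)$ when divided by $\rho_\beta(s)\log^{1/2}(C_0+t)$, and then factors out $\log^{1/2-\beta}(C_0+N^\eta)$.
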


\begin{proof}
   We have by Lemma \ref{lem:finite:moc:proofs} that 
    \begin{align*}
        |P_N(t)-P_N(t-s)| \leq M_{P_N}\sqrt{s(1+\log(t/s)+\log(t))} \le M_{P_N}(\sqrt{2s\log(et)}+\sqrt{s|\log(s)|})
    \end{align*}
    holds almost surely.  Therefore
    \begin{align}
    \label{e:increment:bound}
        &\frac{|(P_N(t)-P_N(t-s))|}{\rho_\beta(s)\log^{1/2}(C_0+t)}\le M_{P_N}\frac{\sqrt{2s\log(et)}+\sqrt{s|\log(s)|}}{\sqrt{s}\log^\beta(C_0+1/s)\log^{1/2}(C_0+t)}\\
        &=M_{P_N}\frac{\sqrt{2\log(et)}}{\log^\beta(C_0+1/s)\log^{1/2}(C_0+t)}+M_{P_N}\frac{\sqrt{|\log(s)|}}{\log^\beta(C_0+1/s)\log^{1/2}(C_0+t)}.\nonumber
    \end{align}
    The first term is clearly uniformly bounded in probability in $t\geq 1, s\geq 0$. The deterministic part of the second term is bounded by $\log(C_0+1/s)^{-\beta}$ for $s\geq e$. For $s \in [1/2,e]$ the deterministic part is continuous and thus bounded. Finally, we focus on arguments $s<1/2$. Here, we may use that $\log(C_0+1/s)^{-\beta}<\log(1/s)^{-\beta}$ to obtain 
    \[
     \frac{\sqrt{|\log(s)|}}{\log^\beta(C_0+1/s)\log^\beta(C_0+t)} \leq |\log(s)|^{1/2-\beta},
    \]
    which yields that the second term is uniformly bounded in probability for $t\geq 1, s<1/2$, too. Consequently
    \[
        \sup_{t\geq N^\eta}\frac{|P_N(t)-P_N(t-s)|}{\rho_\beta(s)\log^\beta(C_0+t)}\leq\log^{1/2-\beta}(C_0+N^\eta)\sup_{t\geq N^\eta}\frac{|P_N(t)-P_N(t-s)|}{\rho_\beta(s)\log^{1/2}(C_0+t)}= o(1)O_\PR(1),
    \]
    as desired. The same argument works for Brownian motion as its increments are $N(0,|t-s|)$ distributed, making Lemma \ref{lem:finite:moc:proofs} applicable also in this case.
\end{proof}

In our test statistic we start with a time series of observations made at discrete times $k=1,2,...$ . Later in the proofs we need to transition to a rescaled continuous time to describe the limiting distributions. The next lemma is a transition tool from discrete to continuous time.

\begin{lem}
\label{lem:cont:discrete}
Suppose that $P_N$ satisfies 
    \[
        \sup_{N \geq 1}\sup_{s,t \geq 0}\frac{\|P_N(s)-P_N(t)\|_{\Psi_2}}{\sqrt{|t-s|}}\leq K~.
    \]     
    Then we have, for any $\eta>0$, that
    \begin{align*}
           &\sup_{1 \leq t \leq N^{\eta}}\sup_{ \substack{ 0<s\leq t/2 \\ t-s\geq 1}}\frac{\Big|\min(1,s)P_N(\max(1,s))-(P_N(t)-P_N(t-s))\Big|}{\rho_\beta(s)\log^\beta(C_0+t)}\\       
        =&\sup_{N \leq k \leq N^{1+\eta}}\sup_{ \substack{1 \leq \ell \leq k/2 \\ k-\ell \geq N}} \frac{\big|\min(1,\ell/N)S_{\max(N,\ell)}-(S_k-S_{k-\ell}) \big|}{\rho_\beta(\ell/N)\log^\beta(C_0+k/N)}+o_\PR(1).
    \end{align*}   
 Moreover, with $B$ a centered Brownian motion and $ \tilde B$ its rescaled version  $ \tilde B(x) = B(xN)$ we have
    \begin{align*}           
        &\sup_{1 \leq t \leq N^{\eta}}\sup_{ \substack{ 0<s\leq t/2 \\ t-s\geq 1}}\frac{\Big|\min(1,s)\tilde B(\max(1,s))-(\tilde B(t)-\tilde B(t-s))\Big|}{\rho_\beta(s)\log^\beta(C_0+t)}+o_\PR(1)   \\
        =&\sup_{1 \leq k \leq N^{1+\eta}}\sup_{ \substack{1 \leq \ell \leq k/2 \\ k-\ell \geq N}} \frac{\big|\min(1,\ell/N)B(\max(N,\ell))-(B(k)-B(k-\ell)) \big|}{\rho_\beta(\ell/N)\log^\beta(C_0+k/N)}+o_\PR(1)~.
    \end{align*}    
\end{lem}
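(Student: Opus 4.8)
\textbf{Proof strategy for Lemma \ref{lem:cont:discrete}.}
The plan is to handle the two displayed identities in parallel, since the Brownian motion is (by Proposition 2.7.1 in \cite{vershynin:2018}, applied to its Gaussian increments) a special case of a process satisfying the uniform subgaussian increment bound, so all estimates derived for $P_N$ apply verbatim to $\tilde B$. The core of the argument is a comparison of the discrete supremum over $(k,\ell)$ with the continuous supremum over $(t,s)$ under the substitution $t = k/N$, $s = \ell/N$. Note that on the discrete side the argument of $P_N$ at index $k$ equals $N^{-1/2}S_k$, so $S_k = N^{1/2}P_N(k/N)$ exactly at integer multiples; the discrete expression inside the supremum is literally the continuous expression evaluated at $(t,s) = (k/N,\ell/N)$, up to the $\min(1,\ell/N)S_{\max(N,\ell)}$ term which matches $\min(1,s)P_N(\max(1,s))\cdot N^{1/2}$. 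After dividing by $N^{1/2}$ (which is absorbed into $\rho_\beta$ via $\rho_\beta(\ell/N) = N^{-1/2}\rho_\beta(\ell)\cdot(\text{log correction})$ — one must be slightly careful here and instead simply observe both sides carry the same $N^{-1/2}$ scaling) the two suprema agree on the grid $\{(k/N,\ell/N)\}$. So the discrete supremum is exactly the restriction of the continuous supremum to a grid of mesh $1/N$.

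The remaining work is therefore to show that the continuous supremum over the full region $\{1 \le t \le N^\eta,\ 0 < s \le t/2,\ t-s \ge 1\}$ differs from its restriction to the $1/N$-grid by only $o_\PR(1)$. First I would fix a grid point $(t_0,s_0)$ closest to a given $(t,s)$, with $|t-t_0| \le 1/N$ and $|s-s_0| \le 1/N$, and bound the difference of the two numerators. The numerator difference decomposes into increments of $P_N$ (or $\tilde B$) over intervals of length at most $1/N$, plus the variation of the deterministic factor $\min(1,s)$ over an interval of length $1/N$ (trivially $O(1/N)$) times a bounded value of the process. For the process increments I would invoke Lemma \ref{lem:finite:moc:proofs}: an increment of $P_N$ over an interval $[u,u+1/N]$ with $u \le N^\eta$ is bounded by $M_{P_N}\sqrt{(1/N)(1 + \log(uN) + \delta|\log u|)} \lesssim M_{P_N} N^{-1/2}\sqrt{\log N}$ uniformly over the region, since $\log(uN) \le \log(N^{1+\eta}) \asymp \log N$ and $|\log u|$ is likewise $O(\log N)$ on $u \ge$ (a small constant) — care is needed for small $u$, i.e. $s$ near $0$, but there the factor $\rho_\beta(s)$ in the denominator is bounded below only like $s^{1/2}$, which could vanish. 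This is the delicate point (see below). Away from that, dividing the $O(M_{P_N} N^{-1/2}\sqrt{\log N})$ numerator bound by the denominator $\rho_\beta(s)\log^\beta(C_0+t) \gtrsim \rho_\beta(s)$ and taking the supremum gives a term of order $M_{P_N} N^{-1/2}\sqrt{\log N}/\inf \rho_\beta(s)$, which is $o_\PR(1)$ provided $s$ is bounded below by a constant.

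The main obstacle is thus the regime of very small $\ell$ (equivalently $s = \ell/N$ of order $1/N$), where the grid spacing is comparable to $s$ itself and $\rho_\beta(s)$ in the denominator is small. Here I would argue that the contribution of these scales is itself negligible directly, not via a grid-approximation: for $\ell$ bounded (say $\ell \le L$ for a large constant $L$), $S_k - S_{k-\ell}$ is a sum of at most $L$ subgaussian variables, so $|S_k - S_{k-\ell}| = O_\PR(\sqrt{\log(\#\text{terms})})$ uniformly over $k \le N^{1+\eta}$ by a maximal inequality, i.e. $O_\PR(\sqrt{\log N})$; divided by $\rho_\beta(\ell/N) \gtrsim (\ell/N)^{1/2}\log^\beta(C_0 + N/\ell) \gtrsim N^{-1/2}\log^\beta N$ this contributes $O_\PR(\sqrt{\log N}/(N^{-1/2}\log^\beta N)) = O_\PR(N^{1/2}\log^{1/2-\beta}N)$ — which does \emph{not} vanish, so this crude bound is insufficient and one must instead keep these small scales as genuine terms on \emph{both} sides of the identity. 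The cleanest resolution is: split the region at $\ell = $ (a fixed large constant) $L$; for $\ell > L$ the grid-approximation above works because $s = \ell/N \ge L/N$ makes $\rho_\beta(s)^{-1} \lesssim (N/L)^{1/2}$ and the increment bound $M_{P_N}N^{-1/2}\sqrt{\log N}$ beats it when $L$ is large (one gets $o_\PR(1)$ after letting $L \to \infty$ slowly, e.g. $L = \log N$); for $\ell \le L$ the discrete and continuous expressions are compared termwise using that the continuous process $P_N$ is \emph{piecewise linear} between integers, so on $s \in [(\ell-1)/N, \ell/N]$ the continuous supremand is monotone-ish in a way that is controlled by its grid values plus an $O_\PR(N^{-1/2}\sqrt{\log N})$ error that is now divided by $\rho_\beta(s)$ with $s$ possibly tiny — and here one uses that $P_N$ being piecewise linear means $P_N(t) - P_N(t-s)$ for $s \le 1/N$ is exactly $s N \cdot (\text{a single increment } \varepsilon_j/\sqrt N)$, linear in $s$, so the ratio to $\rho_\beta(s) = s^{1/2}\log^\beta(C_0+1/s)$ is $s^{1/2}/\log^\beta(C_0+1/s) \cdot N^{1/2}|\varepsilon_j| \to 0$ as $s \to 0$, making the supremum over $s \in (0,1/N]$ attained essentially at $s = 1/N$, i.e. at the grid. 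Assembling these cases gives the claimed $o_\PR(1)$ bound and completes the proof.
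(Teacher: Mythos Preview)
Your overall plan — recognize the discrete supremum as the grid restriction of the continuous one, then control the off-grid contribution by splitting at a threshold in $s$ — is exactly the paper's approach. The paper splits at $s=\nu(N)=1/\log N$: for $s\le\nu$ it shows the \emph{entire ratio} is small on both sides (so no grid comparison is needed there), and for $s>\nu$ it does the grid approximation you describe via Lemma~\ref{lem:finite:moc:proofs}.

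Your execution goes astray at the small-$s$ step. Your first instinct, ``the contribution of these scales is itself negligible directly,'' is correct and is precisely what the paper does. The calculation you then perform drops the $N^{-1/2}$ that you yourself absorbed into $P_N$ two paragraphs earlier (the lemma as printed is in fact missing a $\sqrt N$ in the discrete denominator; with it, the two suprema agree exactly on the grid, as you say). With the correct normalization your crude bound gives $O_\PR(\log^{1/2-\beta}N)=o_\PR(1)$ since $\beta>1/2$, not $O_\PR(N^{1/2}\log^{1/2-\beta}N)$. The paper obtains the same conclusion more cleanly from Lemma~\ref{lem:finite:moc:proofs}: for small $s$ the ratio $|P_N(t)-P_N(t-s)|/(\rho_\beta(s)\log^\beta(C_0+t))$ is bounded by $M_{P_N}$ times a constant multiple of $\log^{-\beta}(C_0+1/s)+|\log s|^{1/2-\beta}$, which tends to $0$ as $s\downarrow 0$ uniformly in $t$, so any $\nu(N)\to 0$ suffices. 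Your piecewise-linear detour is therefore unnecessary, and in any case it only treats $s\in(0,1/N]$, not the full small-scale range you need.

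Two further points you gloss over that the paper handles explicitly: the variation of the \emph{denominator} $\rho_\beta(s)$ between grid points contributes its own term (the paper's $I_2,J_2$), and the first summand $\min(1,s)P_N(\max(1,s))$ needs separate treatment when $s>1$, since then the argument of $P_N$ itself moves with $s$.
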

\begin{proof}
    We only show the statement for the partial sum process. The proof for the Brownian motion follows by the same arguments. Clearly it suffices to show that
    \begin{align*}
        \sup_{\substack{|s-s'|\leq 1/N\\0<s\leq  1 }}|(s-s')P_N(1)|&=o_\PR(1),\\        
        \sup_{\substack{|s-s'|\leq 1/N\\1 \leq s \leq N^{\eta} }}\Big|\frac{P_N(s)}{\rho_\beta(s)}-\frac{P_N(s')}{\rho_\beta(s')}\Big|&=o_\PR(1),\\
        \sup_{\substack{|t-t'|\leq 1/N\\|s-s'|\leq 1/N,\\
        \max(t,t')\leq N^\eta \\s \leq t, s'\leq t'}}\left|\frac{\Big|P_N(t)-P_N(t-s)\Big|}{\rho_\beta(s)\log^\beta(C_0+t)}-\frac{\Big|P_N(t')-P_N(t'-s')\Big|}{\rho_\beta(s')\log^\beta(C_0+t)}\right|&=o_\PR(1).
    \end{align*}

    The first of the three equalities is the easiest and therefore omitted.  The second follows by noting that Lemma \ref{lem:finite:moc:proofs} implies that, uniformly in $|s-s'|\leq 1/N, 1 \leq s \leq N^{\eta}$,  we have 
    \begin{align*}
        \Big|\frac{P_N(s)}{\rho_\beta(s)}-\frac{P_N(s')}{\rho_\beta(s')}\Big|\leq& \frac{1}{\rho_\beta(s)}\Big|P_N(s)-P_N(s')\Big|+\frac{|\rho(s)-\rho(s')|}{\rho_\beta(s)\rho_\beta(s')}|P_N(s')|\\
        =&I_1+I_2,        
    \end{align*}
        where we further bound
        \begin{align*}
            I_1 \leq &\frac{1}{\rho_\beta(s)}M_{P_N}\sqrt{|s-s'|(1+\log(N^\eta/|s-s'|)+\log(N^\eta))}\\
            =&O(1)O_\PR(1)o(1)=o_\PR(1),\\
            I_2 \leq & O_\PR\Big(\sqrt{\log_2(N)}\Big)\frac{|\rho(s)-\rho(s')|}{\rho_\beta(s)}\leq O_\PR\Big(\sqrt{\log_2(N)}\Big)|\rho(s)-\rho(s')|=o_\PR(1).
        \end{align*}
    
       For the third inequality we will investigate small and large $s,s'$ separately. To be precise, we consider, for some $\nu>0$ to be chosen later, the cases $s,s'\leq \nu$ and $s,s'>\nu$ separately.

    \textbf{Case 1: $s,s'\leq \nu$}\\
    The arguments following \eqref{e:increment:bound} yield that we may choose $\nu=\nu(\epsilon)$ sufficiently small such that
    \[
        \sup_{0<s\leq \nu,t>1}\frac{|P_N(t)-P_N(t-s)|}{\rho_\beta(s)\log^\beta(C_0+t)}\leq \epsilon M_{P_N}~,
    \]
    so that any choice of $\nu=\nu(N)$ that converges to 0 yields the desired result in this case.\\
    \textbf{Case 2: $s,s'>\nu$}\\
    Here we define
    \begin{align*}
        J_1&:=\frac{1}{\rho_\beta(s)}(|P_N(t)-P_N(t')|+|P_N(t-s)-P_N(t'-s')|)\\
        J_2&:=\frac{|\rho_\beta(s)-\rho_\beta(s')|}{\rho_\beta(s)\rho_\beta(s')}|P_N(t')-P_N(t'-s')|
    \end{align*} and, keeping in mind that $|s-s'|,|t-t'|\leq 1/N$, use the bounds 
    \begin{align*}
        &\left|\frac{\Big|P_N(t)-P_N(t-s)\Big|}{\rho_\beta(s)}-\frac{\Big|P_N(t')-P_N(t'-s')\Big|}{\rho_\beta(s')}\right|\\
        \leq & J_1+J_2,\\
    J_1 \leq & \frac{1}{\rho_\beta(s)}2M_{P_N}\sqrt{2N^{-1}(1+2\log(N^\eta)+\log(2N))}\\
        \leq &  2\nu^{-1/2} M_{P_N}\sqrt{2N^{-1}(1+2\log(N^\eta)+\log(2N))}\\
        \leq & 2\nu^{-1/2}\sqrt{2N^{-1}(1+2\log(N^\eta)+\log(2N))}O_\PR(1), \\
    J_2 \leq & O_\PR(\sqrt{\log(N)})\frac{|\rho_\beta(s)-\rho_\beta(s')|}   {\rho_\beta(s')}\\
        \leq& O_\PR(\sqrt{\log(N)})\nu^{-1/2}N^{-1/4},
    \end{align*}
    where we used that $\rho_\beta(s)^{-1}=(s\log^\beta(C_0+1/s))^{-1/2}\leq s^{-1/2}$ when bounding $J_1$ and similar arguments as in \eqref{e:increment:bound}, combined with $t \leq N^\eta$, when bounding $J_2$. 
    Choosing $\nu=\log(N)^{-1}$ yields the desired result.
\end{proof}

\begin{theo}
\label{theo:limit}
    Assume that $(\varepsilon_i)_{i \in \N}$ are i.i.d. and satisfy $\|\varepsilon_i\|_{\Psi_2}\leq K$.
    When $H_0$ holds there exists a Brownian motion $W$ with variance $\sigma^2$ such that
    \begin{align}        
        &\sup_{k\geq N}\sup_{ \substack{1 \leq \ell \leq k/2 \\ k-\ell \geq N}}\frac{\Big|\min(1,\frac{\ell}{N})S_{\max(N,\ell)}-(S_k-S_{k-\ell})\Big|}{\rho_\beta(\ell/N)\sqrt{N}\log^\beta(C_0+k/N)}\\        
        \overset{d}{\to}&\sup_{t \geq 1}\sup_{ \substack{ 0<s\leq t/2 \\ t-s\geq 1}}\frac{\Big|\min(1,s)B(\max(1,s))-(B(t)- B(t-s))\Big|}{\rho_\beta(s)\log^\beta(C_0+t)}  .  
    \end{align}
\end{theo}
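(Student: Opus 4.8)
The plan is to reduce everything, under $H_0$, to the partial sum process $P_N$, then to split the supremum over the monitoring range into a ``moderate'' part $k\le N^{1+\eta}$ and a ``remote'' part $k>N^{1+\eta}$, dispose of the remote part with Lemma~\ref{lem:cutoff}, transfer the moderate part to continuous time with Lemma~\ref{lem:cont:discrete}, and finally couple it to a Brownian motion via a Hungarian embedding. Concretely, under $H_0$ the common mean $\mu^{(1)}$ cancels in $\min(1,\ell/N)S_{\max(N,\ell)}-(S_k-S_{k-\ell})$, so with $s=\ell/N$ and $t=k/N$ one has $N^{-1/2}\hat\gamma^{TC}(\ell,k)=\bigl|\min(1,s)P_N(\max(1,s))-(P_N(t)-P_N(t-s))\bigr|$, using $P_N(j/N)=N^{-1/2}\sum_{i\le j}\varepsilon_i$. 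Hence the left-hand side of the statement equals $\Phi_N:=\sup_{k\ge N}\sup_{1\le\ell\le k/2,\,k-\ell\ge N}\frac{|\min(1,s)P_N(\max(1,s))-(P_N(t)-P_N(t-s))|}{\rho_\beta(s)\log^\beta(C_0+t)}$. Fix $\eta>0$ once and for all.

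The first reduction is immediate from the results already available: the second display of Lemma~\ref{lem:cutoff}, restricted to the grid $t=k/N>N^\eta$, shows that the contribution of $k>N^{1+\eta}$ to $\Phi_N$ is $o_\PR(1)$, and Lemma~\ref{lem:cont:discrete} converts the remaining contribution of $N\le k\le N^{1+\eta}$ into the continuous functional $\Phi_N^{\mathrm{c}}:=\sup_{1\le t\le N^\eta}\sup_{0<s\le t/2,\,t-s\ge 1}\frac{|\min(1,s)P_N(\max(1,s))-(P_N(t)-P_N(t-s))|}{\rho_\beta(s)\log^\beta(C_0+t)}$, up to an additive $o_\PR(1)$. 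So far no new work is needed beyond invoking the two lemmas.

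The crux is the strong approximation. Since subgaussianity \eqref{e:SG} implies a finite moment generating function near the origin, the Komlós--Major--Tusn\'ady embedding realises on a common probability space a Brownian motion $W$ with variance $\sigma^2=\mathbb{E}\varepsilon_1^2$ such that $\sup_{j\le N^{1+\eta}}\bigl|\sum_{i\le j}\varepsilon_i-W(j)\bigr|=O(\log N)$ a.s.; combined with the interpolation term of $P_N$ and the Brownian maximal increment over unit intervals (both $O_\PR(\sqrt{\log N})$ for subgaussian data), this gives $\sup_{0\le t\le N^\eta}|P_N(t)-\widetilde W(t)|=o_\PR(1)$ for $\widetilde W(t):=N^{-1/2}W(Nt)$, again a variance-$\sigma^2$ Brownian motion. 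I would then show that replacing $P_N$ by $\widetilde W$ in $\Phi_N^{\mathrm{c}}$ costs only $o_\PR(1)$ by a two-regime argument: for $s\ge\nu$ the weight satisfies $\rho_\beta(s)\ge\rho_\beta(\nu)>0$, so the three-term difference of $P_N$ and $\widetilde W$ divided by $\rho_\beta(s)\log^\beta(C_0+t)$ is $o_\PR(1)$ uniformly there; for $s<\nu$ the modulus-of-continuity bound of Lemma~\ref{lem:finite:moc:proofs}, applied both to $P_N$ and to $\widetilde W$, makes each truncated functional $\le\varepsilon\cdot O_\PR(1)$, exactly as in the estimate following \eqref{e:increment:bound}, and one lets $N\to\infty$ and then $\nu\downarrow 0$. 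Finally, the first (Brownian) display of Lemma~\ref{lem:cutoff} removes the artificial cutoff $t\le N^\eta$, so that $\Phi_N^{\mathrm{c}}$ agrees, up to $o_\PR(1)$, with $\sup_{t\ge1}\sup_{0<s\le t/2,\,t-s\ge1}\frac{|\min(1,s)\widetilde W(\max(1,s))-(\widetilde W(t)-\widetilde W(t-s))|}{\rho_\beta(s)\log^\beta(C_0+t)}$; this random variable is a.s. finite (bound it by a deterministic multiple of $M_{\widetilde W}+|\widetilde W(1)|$ using the Brownian modulus estimate and $\beta>1/2$) and its law does not depend on $N$, being precisely $L(\sigma^2)$. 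Chaining the four $o_\PR(1)$ reductions and applying Slutsky yields the assertion.

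The main obstacle is the tension between the strong approximation and the almost-maximal weighting of small scales. The Hungarian coupling controls $P_N-\widetilde W$ only up to an additive $N^{-1/2}\mathrm{polylog}(N)$ error that is uniform in $t$, but this error is divided by $\rho_\beta(s)$, which degenerates as $s\downarrow 0$; that is exactly why a direct uniform bound fails, why the $s\lessgtr\nu$ split is unavoidable, and why the sharp global modulus-of-continuity estimate of Lemma~\ref{lem:finite:moc:proofs} is indispensable on both the empirical and the Gaussian side. Everything else -- the cancellation of the mean under $H_0$, the removal of remote scales, and the passage from discrete to continuous time -- is bookkeeping delivered by Lemmas~\ref{lem:cutoff} and~\ref{lem:cont:discrete}.
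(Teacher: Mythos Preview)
Your proposal is correct and follows essentially the same architecture as the paper's proof: cut off remote times via Lemma~\ref{lem:cutoff}, pass between discrete and continuous time via Lemma~\ref{lem:cont:discrete}, couple $P_N$ to a Brownian motion on the moderate window by KMT, and handle the small-$s$ regime (where the weight $\rho_\beta(s)$ degenerates) with the modulus-of-continuity bound of Lemma~\ref{lem:finite:moc:proofs} applied separately to $P_N$ and $\widetilde W$. The only cosmetic difference is that the paper uses an $N$-dependent cutoff $\ell\le N^{1/2}$ (equivalently $s\le N^{-1/2}$), so both regimes become $o_\PR(1)$ directly as $N\to\infty$, whereas you use a fixed $\nu$ and then send $\nu\downarrow 0$; both devices work and lead to the same conclusion.
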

\begin{proof}
    By \cite{kmt:1976} we can find i.i.d. normal random variables $(Z_i)_i$ with variance $\sigma^2$ such that a.s.
    \[
        \Big|\sum_{i=1}^j\varepsilon_i-\sum_{i=1}^jZ_i\Big|\lesssim \log(j).
    \]
    By Lemma \ref{lem:construct:brownian} we may therefore even  find a Brownian motion $W$ with variance $\sigma^2$ such that
     \[
        \Big|\sum_{i=1}^j\varepsilon_i-W(n)\Big|\lesssim \log(j).
    \]
    holds almost surely.\\
    \textbf{Case $\ell \leq N$:}    
    We begin by decomposing
    \begin{align*}
        &\Bigg|\frac{|\frac{\ell}{N}S_N-(S_k-S_{k-\ell})|}{\rho_\beta(\ell/N)\sqrt{N}\log^\beta(C_0+k/N)}-\frac{|\frac{\ell}{N}W(N)-(W(k)-W(k-\ell))|}{\rho_\beta(\ell/N)\sqrt{N}\log^\beta(C_0+k/N)}\Bigg|\\
        \leq &\frac{|\frac{\ell}{N}(S_N-W(N))|}{\rho_\beta(\ell/N)\sqrt{N}\log^\beta(C_0+k/N)}+\frac{|(S_k-S_{k-\ell})-((W(k)-W(k-\ell)))|}{\rho_\beta(\ell/N)\sqrt{N}\log^\beta(C_0+k/N)}\\
        :=& R_1(k,\ell)+R_2(k,\ell).
    \end{align*}
    First we consider $R_1$ and note that
    \begin{align*}
        \sup_{k \geq N}\sup_{1 \leq \ell \leq N}R_1(k,\ell)\lesssim  \sup_{k \geq N}\sup_{1 \leq \ell \leq N}\frac{\sqrt{\frac{\ell}{N}}\log(N)}{\log^\beta(C_0+N/\ell)\sqrt{N}\log^\beta(C_0+k/N)}=o(1).
    \end{align*}

    In the following we let $\eta>0$ be an arbitrary but fixed constant. For $R_2$ we estimate
    \begin{align*}
       & \sup_{N \leq k \leq N^{1+\eta}}\sup_{1 \leq \ell \leq N}R_2(k,\ell)\\
        \lesssim  &\sup_{N \leq k \leq N^{1+\eta}}\sup_{1 \leq \ell \leq N^{1/2}}R_{2}(k,\ell)+ \sup_{N \leq k \leq N^{1+\eta}}\sup_{N^{1/2} \leq \ell \leq N}\frac{\log(k)}{N^{1/4}\log^\beta(C_0+k/N)}\\
        =&\sup_{N \leq k \leq N^{1+\eta}}\sup_{1 \leq \ell \leq N^{1/2}}R_{2}(k,\ell)+o(1),
    \end{align*}
    and then use Theorem 1 from \cite{chevallier2023} and Lemma \ref{lem:finite:moc:proofs} to obtain
    \begin{align*}
        &\sup_{N \leq k \leq N^{1+\eta}}\sup_{1 \leq  \ell \leq N^{1/2}}R_2(k,\ell)\\
        \lesssim  & (M_{P_N}+M_W)\sup_{N \leq k \leq N^{1+\eta}}\sup_{1 \leq \ell \leq N^{1/2}}\frac{\sqrt{\ell/N(1+\log(k/\ell)+\log(C_0+k/N)}}{\sqrt{\ell/N}\log^\beta(C_0+k/N)\log^\beta(C_0+N/\ell)}\\
        \lesssim &(M_{P_N}+M_W)\frac{\sqrt{\log(N)}}{\log^\beta(C_0+N^{1/2})}=o_\PR(1)~.
    \end{align*}
    Combining the previous inequalities we hence have
    \[
        \sup_{N \leq k \leq N^{1+\eta}}\sup_{1 \leq \ell \leq N}R_1(k,\ell)+R_2(k,\ell)=o_\PR(1)~,
    \]
    so that, using Lemma \ref{lem:cont:discrete} to switch between the discretized and continuous versions, we obtain
    \begin{align*}
           &\sup_{1 \leq t \leq N^{\eta}}\sup_{\substack{0<s\leq 1 \\ t-s \geq 1}}\frac{\Big|sP_N(1)-(P_N(t)-P_N(t-s))\Big|}{\rho_\beta(s)\log^\beta(C_0+t)}\\       
        =&\sup_{1 \leq t \leq N^{\eta}}\sup_{\substack{0<s\leq 1 \\ t-s \geq 1}}\frac{\Big|s\tilde W(1)-(\tilde W(t)-\tilde W(t-s))\Big|}{\rho_\beta(s)\log^\beta(C_0+t)}+o_\PR(1),   
    \end{align*}     
    where $\tilde W(t)=N^{-1/2}W(Nt)$.  \\
    \textbf{Case $\ell >N$:}
    Proceeding as in the previous case we need to bound
    \begin{align*}
        R_3(k,\ell)&=\frac{|S_h-W(\ell)|}{\sqrt{\ell}\log^\beta(C_0+k/N)\log^\beta(C_0+\ell/N)},\\
        R_4(k,\ell)&=\frac{|(S_k-S_{k-\ell})-((W(k)-W(k-\ell))|}{\sqrt{\ell}\log^\beta(C_0+k/N)\log^\beta(C_0+\ell/N)}
    \end{align*}
    uniformly over $N \leq k \leq N^{1+\eta}, N<\ell\leq k/2$. These quantities vanish by upper bounding the numerator by $\log(k)\simeq \log(N)$, yielding
    \begin{align*}
         &\sup_{1 \leq t \leq N^{\eta}}\sup_{\substack{1<s\leq t/2\\t-s\geq 1}}\frac{\Big|P_N(s)-(P_N(t)-P_N(t-s))\Big|}{\rho_\beta(s)\log^\beta(C_0+t)}\\       
        =&\sup_{1 \leq t \leq N^{\eta}}\sup_{\substack{1<s\leq t/2\\t-s\geq 1}}\frac{\Big|\tilde W(s)-(\tilde W(t)-\tilde W(t-s))\Big|}{\rho_\beta(s)\log^\beta(C_0+t)}+o(1).
    \end{align*}
    
    \textbf{Combining:} The two cases now jointly yield 
    \begin{align}
    \label{e:brownian:approx}
        &\sup_{1 \leq t \leq N^{\eta}}\sup_{ \substack{ 0<s\leq t/2 \\ t-s\geq 1 }}\frac{\Big|\min(1,s)P_N(\max(1,s))-(P_N(t)-P_N(t-s))\Big|}{\rho_\beta(s)\log^\beta(C_0+t)}\\       
        =&\sup_{1 \leq t \leq N^{\eta}}\sup_{ \substack{ 0<s\leq t/2 \\ t-s\geq 1 }}\frac{\Big|\min(1,s)\tilde W(\max(1,s))-(\tilde W(t)-\tilde W(t-s))\Big|}{\rho_\beta(s)\log^\beta(C_0+t)}+o(1).\\
    \end{align}    
    We hence obtain
    \begin{align*}
        &\sup_{k\geq N}\sup_{ \substack{1 \leq \ell \leq k/2 \\ k-\ell \geq N}}\frac{\Big|\min(1,\frac{\ell}{N})S_{\max(N,\ell)}-(S_k-S_{k-\ell})\Big|}{\rho_\beta(\ell/N)\sqrt{N}\log^\beta(C_0+k/N)}\\
       =&\sup_{\substack{t \geq 1 \\ tN \in \N}}\sup_{\substack{0<s \leq t/2 \\ sN \in \mathbb{N} \\ t-s\geq 1}}\frac{\Big|\min(1,s)P_N(\max(1,s))-(P_N(t)-P_N(t-s))\Big|}{\rho_\beta(s)\log^\beta(C_0+t)}+o(1)\\
        \overset{\ref{lem:cutoff}}{=}&\sup_{\substack{1 \leq t \leq N^{\eta}\\ tn \in \N}}\sup_{\substack{0<s\leq t/2\\ sN\in \N \\ t-s\geq 1}}\frac{\Big|\min(1,s)P_N(\max(1,s))-(P_N(t)-P_N(t-s))\Big|}{\rho_\beta(s)\log^\beta(C_0+t)}+o_\PR(1)\\     
        \overset{\ref{lem:cont:discrete}}{=}&\sup_{1 \leq t \leq N^{\eta}}\sup_{ \substack{ 0<s\leq t/2 \\ t-s\geq 1}}\frac{\Big|\min(1,s)P_N(\max(1,s))-(P_N(t)-P_N(t-s))\Big|}{\rho_\beta(s)\log^\beta(C_0+t)}+o_\PR(1)\\ 
        \overset{\ref{e:brownian:approx}}{=}&\sup_{1 \leq t \leq N^{\eta}}\sup_{ \substack{ 0<s\leq t/2 \\ t-s\geq 1}}\frac{\Big|\min(1,s)\tilde W(\max(1,s))-(\tilde W(t)-\tilde W(t-s))\Big|}{\rho_\beta(s)\log^\beta(C_0+t)}+o_\PR(1).      
    \end{align*}
    Noting that $\tilde W\overset{d}{=}B$ for some Brownian motion $B$ with variance $\sigma^2$ we have that
    \begin{align*}
     &\sup_{1 \leq t \leq N^{\eta}}\sup_{ \substack{ 0<s\leq t/2 \\ t-s\geq 1}}\frac{\Big|\min(1,s)\tilde W(\max(1,s))-(\tilde W(t)-\tilde W(t-s))\Big|}{\rho_\beta(s)\log^\beta(C_0+t)} \\
    \overset{d}{=}&\sup_{1 \leq t \leq N^{\eta}}\sup_{ \substack{ 0<s\leq t/2 \\ t-s\geq 1}}\frac{\Big|\min(1,s)B(\max(1,s))-(B(t)-B(t-s))\Big|}{\rho_\beta(s)\log^\beta(C_0+t)}  \\
    \overset{\ref{lem:cutoff}}{=}&\sup_{t \geq 1}\sup_{ \substack{ 0<s\leq t/2 \\ t-s\geq 1}}\frac{\Big|\min(1,s)B(\max(1,s))-(B(t)-B(t-s))\Big|}{\rho_\beta(s)\log^\beta(C_0+t)}+o_\PR(1)
    \end{align*}
    which yields the desired conclusion.

\end{proof}

\begin{lem}
    Suppose that the observations $(X_i)_{i \in \N}$ satisfy
    \begin{align*}
        X_i=\begin{cases}
            \varepsilon_i+\mu^{(1)} \quad 1 \leq i \leq N+k^\star\\
            \varepsilon_i+\mu^{(2)}\quad N+k^\star > i 
        \end{cases}
    \end{align*}
    and define $\Delta=|\mu^{(1)}-\mu^{(2)}|$. Then if $\ell$ satisfies 
    \[
        \sqrt{\ell}\gg\log^\beta(C_0+N/\ell)\log^\beta(C_0+k/N)\Delta^{-1}
    \]
    we have
    \[
        \PR(\Gamma(k^\star+\ell)>q_{1-\alpha})=1+o(1).
    \]
\end{lem}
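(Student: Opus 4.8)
The plan is to bound the maximum $\widehat{\Gamma}^{TC}(k^\star+\ell)$ from below by a single, well-chosen summand and to show that this summand alone diverges to infinity in probability, so that it eventually exceeds the fixed, finite quantile $q_{1-\alpha}$ of the limit $L(\sigma^2)$ from Theorem~\ref{thm:main:1}. Set $k:=k^\star+\ell$ and $\Delta:=|\mu^{(1)}-\mu^{(2)}|$, and evaluate the statistic at window length $\ell$. This window is admissible at time $k$ (since $\ell\le k^\star$ gives $\ell\le k/2$ and $k-\ell=k^\star\ge N$), and, crucially, because $k^\star>N$ the early block $X_1,\dots,X_{\max(\ell,N)}$ lies entirely before the change while the recent block $X_{k-\ell+1},\dots,X_k=X_{k^\star+1},\dots,X_{k^\star+\ell}$ lies entirely after it.

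First I would separate signal from noise in $\hat\gamma^{TC}(\ell,k)$. Writing $S_j=\sum_{i=1}^j\varepsilon_i+\E S_j$ and using that each block has a constant mean, one obtains
\begin{align*}
\min(1,\ell/N)\,S_{\max(\ell,N)}-(S_k-S_{k-\ell})=\ell(\mu^{(1)}-\mu^{(2)})+R_\ell,
\end{align*}
where $R_\ell:=\min(1,\ell/N)\sum_{i=1}^{\max(\ell,N)}\varepsilon_i-\sum_{i=k^\star+1}^{k^\star+\ell}\varepsilon_i$ is a sum of independent, centered, uniformly subgaussian increments over the disjoint index sets $\{1,\dots,\max(\ell,N)\}$ and $\{k^\star+1,\dots,k^\star+\ell\}$. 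The standard additivity bound for the $\Psi_2$-norm gives $\|R_\ell\|_{\Psi_2}\lesssim\sqrt{\ell}$ in both regimes: for $\ell>N$ this is immediate, and for $\ell\le N$ the prefactor $\ell/N$ reduces $\|(\ell/N)\sum_{i=1}^N\varepsilon_i\|_{\Psi_2}\lesssim\ell/\sqrt{N}$ to at most $\sqrt{\ell}$. Hence, by the reverse triangle inequality, $\hat\gamma^{TC}(\ell,k)\ge\ell\Delta-|R_\ell|$.

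Next I would multiply through by $w^{TC,\beta}(\ell,k)=\ell^{-1/2}\log^{-\beta}(C_0+N/\ell)\log^{-\beta}(C_0+k/N)$. The signal term becomes
\begin{align*}
w^{TC,\beta}(\ell,k)\,\ell\Delta=\frac{\sqrt{\ell}\,\Delta}{\log^{\beta}(C_0+N/\ell)\log^{\beta}(C_0+k/N)},
\end{align*}
which diverges to $\infty$ exactly under the hypothesis $\sqrt{\ell}\gg\log^{\beta}(C_0+N/\ell)\log^{\beta}(C_0+k/N)\Delta^{-1}$. For the noise term,
\begin{align*}
\bigl\|w^{TC,\beta}(\ell,k)\,R_\ell\bigr\|_{\Psi_2}\lesssim\frac{1}{\log^{\beta}(C_0+N/\ell)\log^{\beta}(C_0+k/N)}\le\frac{1}{\log^{2\beta}(C_0)},
\end{align*}
a finite constant since $C_0>1$; hence $w^{TC,\beta}(\ell,k)\,|R_\ell|=O_\PR(1)$. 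Combining,
\begin{align*}
\widehat{\Gamma}^{TC}(k)\ge w^{TC,\beta}(\ell,k)\,\hat\gamma^{TC}(\ell,k)\ge\frac{\sqrt{\ell}\,\Delta}{\log^{\beta}(C_0+N/\ell)\log^{\beta}(C_0+k/N)}-O_\PR(1)\conp\infty,
\end{align*}
so $\PR(\widehat{\Gamma}^{TC}(k^\star+\ell)>q_{1-\alpha})=1+o(1)$.

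Because we only bound a maximum below by one of its terms, no uniform-in-$\ell$ control is required here and the modulus-of-continuity result Lemma~\ref{lem:finite:moc} is not invoked. The step most prone to slips — and the only one needing genuine care — is the uniform-in-regime bound $\|R_\ell\|_{\Psi_2}\lesssim\sqrt{\ell}$ (the case $\ell\le N$, where the $\ell/N$ prefactor must be used to absorb the longer sum), since this is precisely what forces the weighted noise to be $O_\PR(1)$ rather than slowly growing; checking admissibility of the length-$\ell$ window and the placement of the two blocks relative to the change, where $k^\star>N$ enters, is the other point to verify, but it is routine.
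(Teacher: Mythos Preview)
Your proof is correct and, in fact, takes a more elementary route than the paper. Both arguments pick the single window $\ell$ at time $k=k^\star+\ell$, extract the same signal term $\sqrt{\ell}\,\Delta/\big(\log^\beta(C_0+N/\ell)\log^\beta(C_0+k/N)\big)$, and show the weighted noise is $O_\PR(1)$. The difference lies in how the noise is controlled: the paper upper-bounds the noise at the specific pair $(\ell,k)$ by the \emph{global} supremum $\sup_{t\ge1}\sup_{s}\,|\min(1,s)P_N(\max(1,s))-(P_N(t)-P_N(t-s))|/\big(\rho_\beta(s)\log^\beta(C_0+t)\big)$ and then invokes the modulus-of-continuity machinery (the arguments behind Lemma~\ref{lem:cutoff}, ultimately Lemma~\ref{lem:finite:moc:proofs}) to conclude this supremum is $O_\PR(1)$. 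You instead bound the single variable $R_\ell$ directly via $\|R_\ell\|_{\Psi_2}\lesssim\sqrt{\ell}$, which after multiplication by $w^{TC,\beta}(\ell,k)$ gives a $\Psi_2$-norm bounded by a constant. Your approach is lighter and self-contained; the paper's has the virtue of reusing the same uniform bound already needed for the null-hypothesis analysis, but for this particular lemma that is overkill. Your explicit check of admissibility ($\ell\le k^\star$ so that $\ell\le k/2$ and both blocks sit on the correct side of the change) is a point the paper leaves implicit.
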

\begin{proof} 
    Let us consider, for arbitrary $1 \leq \ell \leq k/2, N+k$ the estimate (use Lemma \ref{lem:cutoff} for the second line)
    \begin{align*}
         &\Gamma(k^\star+\ell)\\
         \geq& \frac{\sqrt{\ell}\Delta}{\log^\beta(C_0+N/\ell))\log^\beta(C_0+k/N)}\\
         &   -\sup_{1 \leq t}\sup_{\substack{0 \leq s \leq t/2\\ t-s \geq 1 }}\frac{|\min(1,s)P_N(\max(1,s))-(P_N(t)-P_N(t-s))|}{\rho_\beta(s)\log^\beta(C_0+t)}\\
        =&\frac{\sqrt{\ell}\Delta}{\log^\beta(C_0+N/\ell)\log^\beta(C_0+k/N)}+O_\PR(1).
    \end{align*}    
    Consequently, 
    \[
    \Gamma(k^\star+\ell)> q_{1-\alpha}
    \]
    happens with probability tending to $1$, whenever
    \[
        \sqrt{\ell}\gg\log^\beta(C_0+N/\ell)\log^\beta(C_0+k/N)\Delta^{-1},
    \]
    as desired.
\end{proof}

\subsection{Proofs for NPTC}
Recall the notational conventions about the time $k$ made at the beginning of the Appendix.  In particular, we write for $k>N$
\begin{align*}
    \widehat \Gamma^{TC}_F(k)=\max_{\substack{1 \leq \ell \leq k/2 \\ k-\ell \geq N}}\omega^{TC,\beta}(\ell,k)\hat \gamma^{TC}_F(\ell,k),
\end{align*}
where 
\begin{align*}
    \hat \gamma^{TC}_F(\ell,k)=\Big|\min(1, \ell/N) \cdot \hat G_{\max(\ell,N)}-(\hat G_k-\hat G_{k-\ell})\Big|,\\
     w^{TC, \beta}(\ell, k)  :=\ell^{-1/2} \log^{-\beta}(C_0+N/\ell)\log^{-\beta}(C_0+k/N).
\end{align*}
Also, in the following considerations we may and will assume WLOG that $X_i\sim \text{Unif}[0,1]$ (else, replace $x$ by $F^{-1}(x)$ in the definition of all involved quantities).

\subsubsection{Modulus of continuity results}
\begin{lem}
\label{lem:empirical:sub-Gaussian}
    Let $(X_i)_{i \in \N} \overset{i.i.d.}{\sim} \text{Unif}[0,1]$ and consider the partial sum process $U_N$ and the Kiefer-Müller process (both defined precisely in Section \ref{sec:22}). Then for some constant $C>0$
    \[
        \Big\| \|U_N(t,\cdot)-U_N(s,\cdot)\|_\infty\Big\|_{\Psi_2}\leq C\sqrt{t-s},
    \]
    as well as
    \[
        \Big\| \|K(t,\cdot)-K(s,\cdot)\|_\infty\Big\|_{\Psi_2}\leq C\sqrt{t-s}.
    \]
    
\end{lem}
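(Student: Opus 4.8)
The plan is to deduce both estimates from two classical facts about the uniform empirical process: the Dvoretzky--Kiefer--Wolfowitz (DKW) inequality in Massart's form, with the sample-size-free constant $2$, and the sub-Gaussianity of the supremum of a Brownian bridge. As noted just before the lemma, we may assume $X_i\sim\mathrm{Unif}[0,1]$, so that $U_N$ is assembled from the independent, $[-1,1]$-valued, centered random functions $\phi_i(x):=1\{X_i\le x\}-x$.

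\textbf{The process $U_N$.} Fix $0<s<t$, put $h:=t-s$, $a:=\lfloor Ns\rfloor$, $b:=\lfloor Nt\rfloor$. Reading off the definition of $U_N$ one checks
\[
N^{1/2}\bigl(U_N(t,x)-U_N(s,x)\bigr)=(a+1-Ns)\,\phi_{a+1}(x)+\sum_{i=a+2}^{b}\phi_i(x)+(Nt-b)\,\phi_{b+1}(x),\qquad a<b,
\]
(the middle sum empty when $b=a+1$), while for $a=b$ the right side is simply $(Nt-Ns)\,\phi_{b+1}(x)$. In all cases the total weight of the $\phi_i$'s equals $N(t-s)=Nh$, at most two of them carry a fractional weight in $[0,1)$, and the remaining ones carry weight $1$. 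If $Nh\le 2$ (which in particular covers $b\in\{a,a+1\}$), then since each $\|\phi_i\|_\infty\le1$ the increment is bounded deterministically by $N^{-1/2}\cdot Nh=\sqrt{Nh}\,\sqrt h\le\sqrt2\,\sqrt h$, hence its $\Psi_2$-norm is at most $\sqrt{2/\log2}\,\sqrt h$. If $Nh>2$, then $m:=b-a-1\ge1$; the two fractional boundary terms contribute at most $2N^{-1/2}\le2\sqrt h$ to the sup-norm (using $N^{-1/2}\le\sqrt h$, valid since $Nh>1$), and the bulk $\phi_{a+2},\dots,\phi_b$ is, by the i.i.d. assumption, $\sqrt m$ times the uniform empirical process $\mathbb{G}_m$ on $m$ observations. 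Massart's DKW inequality gives $\mathbb{P}(\|\mathbb{G}_m\|_\infty>\lambda)\le2e^{-2\lambda^2}$ for every $m$ and $\lambda>0$; integrating this tail via $\mathbb{E}\exp(c\|\mathbb{G}_m\|_\infty^2)=1+\int_0^\infty 2c\lambda e^{c\lambda^2}\mathbb{P}(\|\mathbb{G}_m\|_\infty>\lambda)\,d\lambda$ yields $\mathbb{E}\exp(\tfrac23\|\mathbb{G}_m\|_\infty^2)\le2$, so $\sup_m\bigl\|\,\|\mathbb{G}_m\|_\infty\,\bigr\|_{\Psi_2}\le\sqrt{3/2}$. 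Combining with the triangle inequality for $\|\cdot\|_{\Psi_2}$ and $m\le Nh$ gives $\bigl\|\,\|U_N(t,\cdot)-U_N(s,\cdot)\|_\infty\,\bigr\|_{\Psi_2}\le\bigl(\sqrt{3/2}+2/\sqrt{\log2}\bigr)\sqrt h$, as claimed.

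\textbf{The Kiefer--Müller process.} Here the argument is immediate from the covariance structure. For $0<s<t$, bilinearity of the covariance together with $\mathrm{Cov}(K(t,x),K(s,y))=(t\wedge s)(x\wedge y-xy)$ gives $\mathrm{Cov}\bigl(K(t,x)-K(s,x),\,K(t,y)-K(s,y)\bigr)=(t-s)(x\wedge y-xy)$. Hence, as centered Gaussian processes on $[0,1]$, $\{K(t,x)-K(s,x)\}_x\overset{d}{=}\sqrt{t-s}\,\{B^0(x)\}_x$ for a standard Brownian bridge $B^0$, so $\|K(t,\cdot)-K(s,\cdot)\|_\infty\overset{d}{=}\sqrt{t-s}\,\|B^0\|_\infty$. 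Since $\|B^0\|_\infty$ is a $1$-Lipschitz functional of a Gaussian process it has sub-Gaussian tails (Borell--TIS, or directly from Kolmogorov's distribution, $\mathbb{P}(\|B^0\|_\infty>\lambda)\le2e^{-2\lambda^2}$), so $\bigl\|\,\|B^0\|_\infty\,\bigr\|_{\Psi_2}=:C_K<\infty$. Taking $C$ to be the larger of the two constants produced above gives the stated common bound.

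\textbf{Main obstacle.} Nothing here is deep, but the bookkeeping for $U_N$ needs care: one must correctly split off the two fractional boundary increments and, above all, dispose of the regime $N(t-s)\lesssim1$ — where there may be no genuine empirical-process contribution at all — by a purely deterministic estimate. The only genuinely external input is a tail bound for $\|\mathbb{G}_m\|_\infty$ that is uniform in the sample size $m$; Massart's version of DKW is the natural choice precisely because its constant does not degrade with $m$, which is exactly what makes $\sup_m\bigl\|\,\|\mathbb{G}_m\|_\infty\,\bigr\|_{\Psi_2}$ finite and hence the increment bound $\asymp\sqrt h$ rather than $\sqrt{h\log(1/h)}$ or worse.
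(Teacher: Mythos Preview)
Your proof is correct, but it follows a different route from the paper's own argument.

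For $U_N$, the paper invokes a general Talagrand-type concentration inequality for sums of independent bounded processes (Proposition~A.1.6 in van der Vaart--Wellner), which bounds the $\Psi_2$-norm of the supremum by the sum of (i) the expectation of the supremum and (ii) the $\ell_2$-combination of the individual Orlicz norms. Part~(ii) is immediate from boundedness of the summands; for part~(i) the paper controls the expectation via a fourth-moment chaining bound (Theorem~2.2.4 in the same reference). Your argument instead exploits the specific structure of the \emph{uniform} empirical process: after peeling off the two fractional boundary terms and the degenerate regime $Nh\le 2$ by deterministic estimates, Massart's DKW inequality gives sub-Gaussian tails for the bulk directly, uniformly in the sample size. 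This is more elementary and self-contained for the problem at hand; the paper's route is more modular and would transfer unchanged to other bounded summands beyond indicators.

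For $K$, the paper simply remarks that the same template applies, replacing boundedness by Fernique's theorem. Your argument is sharper: you identify the time increment as $\sqrt{t-s}$ times a standard Brownian bridge via the covariance, and then use the explicit Kolmogorov--Smirnov tail $\mathbb{P}(\|B^0\|_\infty>\lambda)\le 2e^{-2\lambda^2}$. This is both shorter and yields an explicit constant.
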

\begin{proof}
    We only handle the case of $U_N$; the arguments for $K$ are exactly the same. We first note that, for $s<t$, we have
    \[
        (U_N(t,x)-U_N(s,x))=N^{-1/2}\sum_{i=Ns}^{Nt}(1\{X_i\leq x\}-x)=:N^{-1/2}\sum_{i=Ns}^{Nt}Y_i(x),
    \]
    where non-integer sum ranges are to be understood as linearly interpolated.
    We now apply part 3 of Proposition A.1.6  from \cite{vaart:wellner:1996} to obtain
    \[
        \Big\| \|N^{-1/2}\sum_{i=Ns}^{Nt}Y_i(\cdot)\|_\infty\Big\|_{\Psi_2}\le C\Bigg(  \E\Big[\|N^{-1/2}\sum_{i=Ns}^{Nt}Y_i(\cdot)\|_\infty\Big]+N^{-1/2}\sqrt{\sum_{Ns}^{Nt}\Big\|\|Y_i(\cdot)\|_\infty\Big\|_{\Psi_2}^2}\Bigg).
    \]
    Using that $\|Y_i(\cdot)\|_\infty\leq 2$ we obtain by the definition of the Orlicz norm $\Big\|\|Y_i(\cdot)\|\Big\|_{\Psi_2}\le 2 \sqrt{\log^{-1}(2)}$  (for the proof with $K$ instead use Fernique's Theorem). Now, potentially increasing the constant $C>0$, we obtain the bound
    \[
        N^{-1/2}\sqrt{\sum_{Ns}^{Nt}\Big\|\|Y_i(\cdot)\|\Big\|_{\Psi_2}^2}\leq  C\sqrt{t-s},
    \]
    as well as
    \begin{align}
    \label{e:expectation:bound}    \E\Big[\|N^{-1/2}\sum_{i=Ns}^{Nt}Y_i(\cdot)\|_\infty\Big]\leq\sqrt[4]{\E\Big[\|N^{-1/2}\sum_{i=Ns}^{Nt}Y_i(\cdot)\|_\infty^4\Big]}\leq C\sqrt{t-s},
    \end{align} 
    where the last inequality follows by an application of Theorem 2.2.4 from \cite{vaart:wellner:1996}. To be precise we note that for any argument $x \in [0,1]$ we have
    \[
        \E[(N^{-1/2}\sum_{i=Ns}^{Nt}Y_i(x))^4]\lesssim (t-s)^2
    \]
    by the properties of Binomial random variables. We may thus apply Theorem 2.2.4 with $d(x,y)=|x-y|^{1/2}$ and $\psi(x)=x^4$ to obtain
    \[
        \sqrt[4]{\E\Big[\|N^{-1/2}\sum_{i=Ns}^{Nt}Y_i(\cdot)\|_\infty^4\Big]}\lesssim \sqrt{t-s},
    \] which implies the last inequality in equation \eqref{e:expectation:bound}. Combining the previous inequalities yields the desired result.
\end{proof}

With Lemma \ref{lem:empirical:sub-Gaussian} at our disposal we may follow the proofs of Lemmas \ref{lem:finite:moc:proofs} (use p.573 from \cite{Friz:Victoir:2010} instead of Lemma 1 from \cite{chevallier2023}) and \ref{lem:cutoff} verbatim to obtain the two next results.
\begin{lem}
\label{lem:finite:moc:empirical:proofs}      
    Let $\delta>0$ and consider the processes $K, U_N$ as in Lemma \ref{lem:empirical:sub-Gaussian}. Define the random variables
    \begin{align*}
    M_{U_N} := \sup_{\substack{0< s<t<\infty, \\ h=|t-s|}} \frac{\|U_N(t,\cdot)-U_N(s,\cdot)\|_\infty}{\sqrt{h \left(1+ \log\frac{t}{h} + \delta |\log t| \right)}},\\
    M_{K} := \sup_{\substack{0< s<t<\infty, \\ h=|t-s|}} \frac{\|K(t,\cdot)-K(s,\cdot)\|_\infty}{\sqrt{h \left(1+ \log\frac{t}{h} + \delta |\log t| \right)}}.
    \end{align*}
    Then, $M_{U_N}^{2}$ and $M_K^2$ admit exponential moments that are uniformly bounded in $N$.
\end{lem}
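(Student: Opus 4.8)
The plan is to rerun the argument of Lemma~\ref{lem:finite:moc:proofs} with the real-valued process $G$ replaced by the function-valued path $t\mapsto U_N(t,\cdot)$ (respectively $t\mapsto K(t,\cdot)$), measuring increments in $\|\cdot\|_\infty$ rather than in absolute value. The input that makes this work is Lemma~\ref{lem:empirical:sub-Gaussian}: it is precisely the analogue of the increment condition~\eqref{eq:subg:condition}, providing a constant $K$, \emph{independent of $N$}, with $\big\|\,\|U_N(t,\cdot)-U_N(s,\cdot)\|_\infty\,\big\|_{\Psi_2}\le K\sqrt{|t-s|}$ and the same bound for $K$. Since this constant does not depend on $N$, all constants produced below will be $N$-free as well.

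The only place in the proof of Lemma~\ref{lem:finite:moc:proofs} where the scalar nature of $G$ enters is the invocation of Lemma~1 of~\cite{chevallier2023}, a Garsia--Rodemich--Rumsey inequality for real-valued paths. I replace it by the metric-space version of the GRR lemma, e.g.\ the statement on p.~573 of~\cite{Friz:Victoir:2010}, applied to the path $t\mapsto U_N(t,\cdot)$ in the space of bounded functions on $[0,1]$ with the metric induced by $\|\cdot\|_\infty$, keeping the same choices $\Psi(x)=e^{x^2/2}-1$ and $\mu(x)=K\sqrt{cx}$ with $c>1$. The point is that $\Psi\big(\|U_N(x,\cdot)-U_N(y,\cdot)\|_\infty/\mu(|x-y|)\big)$ is real-valued, so the weighted double integrals, the functions $f_\delta$, the variables $\xi_T:=f_\delta(T)\int_0^T\int_0^T\Psi\big(\|U_N(x,\cdot)-U_N(y,\cdot)\|_\infty/\mu(|x-y|)\big)\,dx\,dy$ and $\xi:=\sup\{\xi_T,\xi_{1/T}:T\in\N\}$, and the elementary estimates on $f_\delta$ all carry over verbatim. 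The GRR inequality then gives the pointwise bound on $\|U_N(t,\cdot)-U_N(s,\cdot)\|_\infty$ in terms of $\xi$, and exactly as in the scalar case one arrives at $M_{U_N}^2\le 128K^2c\big(\log(4\xi+1)+32\big)$. Measurability of the path causes no trouble, since $t\mapsto U_N(t,\cdot)$ is piecewise linear and the Kiefer--Müller process admits a version with continuous paths in $C([0,1])$.

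It remains to reprove the analogue of Lemma~\ref{lem:finite:moment:xi}, i.e.\ $\E[\xi^p]<\infty$ for $p\in(1,c)$ when $c\delta>1$; the computation is unchanged. Jensen's inequality gives $\E[\xi_T^q]\le f_\delta(T)^q\,T^{2(q-1)}\int_0^T\int_0^T\E\big[\Psi(\|U_N(t,\cdot)-U_N(s,\cdot)\|_\infty/\mu(|t-s|))^q\big]\,ds\,dt$, and since $q<c<2c$ and $\Psi(x)^q\le\exp(qx^2/2)$, the inner expectation equals $\E\big[\exp\big(\tfrac{q}{2c}\big(\|U_N(t,\cdot)-U_N(s,\cdot)\|_\infty/(K\sqrt{|t-s|})\big)^2\big)\big]\le 2$ by Lemma~\ref{lem:empirical:sub-Gaussian} and the definition of $\|\cdot\|_{\Psi_2}$. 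From here the bound $g(T)\lesssim T^{-2\delta q}$ of~\eqref{e:g:bound}, Markov's inequality, the union bound over $T\in\N$ and the summation over $n$ yield $\E[\xi^p]<\infty$, hence $\E[\exp(\lambda M_{U_N}^2)]<\infty$ for a suitable $\lambda>0$, with a bound depending only on $K$ and therefore uniform in $N$. The case of $M_K$ is word for word the same, Fernique's theorem playing the role that the trivial subgaussianity of the bounded summands $Y_i(\cdot)$ played inside Lemma~\ref{lem:empirical:sub-Gaussian}.

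The one genuinely new ingredient relative to Lemma~\ref{lem:finite:moc:proofs} is the metric-space Garsia--Rodemich--Rumsey inequality together with the verification that its hypotheses apply to $t\mapsto U_N(t,\cdot)$ and $t\mapsto K(t,\cdot)$; this is where I expect the main (modest) effort to lie. Once that is in place, the non-compact index set is again handled by the $f_\delta$-weighting device, and the discounting term $\delta|\log t|$ in the denominators defining $M_{U_N}$ and $M_K$ emerges exactly as in the scalar proof.
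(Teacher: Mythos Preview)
Your proposal is correct and matches the paper's own proof essentially verbatim: the paper simply states that, given Lemma~\ref{lem:empirical:sub-Gaussian}, one follows the proof of Lemma~\ref{lem:finite:moc:proofs} replacing Lemma~1 of \cite{chevallier2023} by the metric-space Garsia--Rodemich--Rumsey inequality on p.~573 of \cite{Friz:Victoir:2010}. You have identified exactly this substitution and spelled out the (unchanged) moment computation for $\xi$, so there is nothing to add.
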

 
\begin{lem}
\label{lem:cutoff:distributon}
    We have, for any $\eta>0$, that
    \begin{align*}
        &\sup_{t \geq N^\eta}\sup_{\substack{0 \leq s \leq t/2\\ t-s \geq 1 }}\frac{\Big\|\min(1,s)K(\max(1,s),\cdot)-(K(t,\cdot))-K(t-s,\cdot))\Big\|}{\rho_\beta(s)\log^\beta(C_0+t)}=o_\PR(1),        
    \end{align*}
    as well as
    \begin{align*}
        & \sup_{t \geq N^\eta}\sup_{\substack{0 \leq s \leq t/2\\ t-s \geq 1 }}\frac{\Big\|\min(1,s)U_N(\max(1,s),\cdot)-(U_N(t,\cdot)-U_N(t-s),\cdot)\Big\|}{\rho_\beta(s)\log^\beta(C_0+t)}=o_\PR(1).
    \end{align*}            

\end{lem}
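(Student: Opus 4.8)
The plan is to replay the proof of Lemma \ref{lem:cutoff} essentially verbatim, with three cosmetic substitutions: the scalar partial sum process $P_N$ is replaced by the function-valued process $U_N(\cdot,\cdot)$ (respectively, the Brownian motion by the Kiefer--M\"uller process $K$), every absolute value $|\cdot|$ by the supremum norm $\|\cdot\|_\infty$, and every invocation of Lemma \ref{lem:finite:moc:proofs} by its functional counterpart Lemma \ref{lem:finite:moc:empirical:proofs}. This works because the only property of $P_N$ used in Lemma \ref{lem:cutoff} is the square-exponential moment bound on its global modulus of continuity, and Lemma \ref{lem:finite:moc:empirical:proofs} supplies exactly this for $\|U_N(t,\cdot)-U_N(s,\cdot)\|_\infty$ and $\|K(t,\cdot)-K(s,\cdot)\|_\infty$, with the same modulus function $\sqrt{h(1+\log(t/h)+\delta|\log t|)}$.

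Concretely, I would first treat the increment part. By Lemma \ref{lem:finite:moc:empirical:proofs} applied with time points $t-s<t$ (so $h=s$) and, say, $\delta=1$, almost surely $\|K(t,\cdot)-K(t-s,\cdot)\|_\infty\le M_K\sqrt{s(1+\log(t/s)+\log t)}\le M_K(\sqrt{2s\log(et)}+\sqrt{s|\log s|})$, and likewise for $U_N$ with $M_{U_N}$. Dividing by $\rho_\beta(s)\log^{1/2}(C_0+t)$ and splitting as in equation \eqref{e:increment:bound}, the resulting quotient is bounded, uniformly over $t\ge 1$ and $0<s\le t$, by $M_K$ times a finite deterministic constant (checking the regimes $s\ge e$, $s\in[1/2,e]$, and $s<1/2$ exactly as in the proof of Lemma \ref{lem:cutoff}, where $\beta>1/2$ is what keeps the $s<1/2$ term bounded). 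Pulling out the leftover factor $\log^{1/2-\beta}(C_0+t)\le\log^{1/2-\beta}(C_0+N^\eta)$ then gives $\sup_{t\ge N^\eta}\sup_s\|K(t,\cdot)-K(t-s,\cdot)\|_\infty/(\rho_\beta(s)\log^\beta(C_0+t))\le\log^{1/2-\beta}(C_0+N^\eta)\,O_\PR(1)=o_\PR(1)$.

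The boundary term $\min(1,s)K(\max(1,s),\cdot)$ is handled along the same lines. For $s\le 1$ it equals $sK(1,\cdot)$ and $\|sK(1,\cdot)\|_\infty/(\rho_\beta(s)\log^\beta(C_0+t))=s^{1/2}\|K(1,\cdot)\|_\infty/(\log^\beta(C_0+1/s)\log^\beta(C_0+t))\le\|K(1,\cdot)\|_\infty/(\log^\beta(C_0)\log^\beta(C_0+N^\eta))=o_\PR(1)$. For $s>1$ it equals $K(s,\cdot)=K(s,\cdot)-K(0,\cdot)$, so Lemma \ref{lem:finite:moc:empirical:proofs} (letting the lower time point tend to $0$) gives $\|K(s,\cdot)\|_\infty\le M_K\sqrt{s(1+\log s)}$; since $s\le t/2$ the quotient is then at most $M_K\sqrt{1+\log t}/(\log^\beta(C_0)\log^\beta(C_0+t))\to0$ uniformly over $t\ge N^\eta$, again because $\beta>1/2$. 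The same three computations, word for word with $K$ replaced by $U_N$ and $M_K$ by $M_{U_N}$, yield the second display.

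I do not expect any real obstacle in this lemma: the genuinely hard work sits in its input, Lemma \ref{lem:finite:moc:empirical:proofs}, where one must establish a global modulus of continuity for a Banach-valued (sup-norm) process via the Garsia--Rodemich--Rumsey estimate on p.~573 of \cite{Friz:Victoir:2010} together with the sub-Gaussian increment bound of Lemma \ref{lem:empirical:sub-Gaussian}. Once that is available, the present lemma is simply the scalar argument of Lemma \ref{lem:cutoff} read off the norm, and the only care needed is the bookkeeping of the three size regimes for $s$ and the (trivial) verification that $\beta>1/2$ tames the logarithmic factors.
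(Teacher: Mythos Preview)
Your proposal is correct and matches the paper's approach exactly: the paper states that, with Lemma \ref{lem:empirical:sub-Gaussian} in hand, the proofs of Lemmas \ref{lem:finite:moc:proofs} and \ref{lem:cutoff} carry over ``verbatim'' to yield Lemmas \ref{lem:finite:moc:empirical:proofs} and \ref{lem:cutoff:distributon}. In fact you give more detail than the paper by explicitly treating the boundary term $\min(1,s)K(\max(1,s),\cdot)$ in the two regimes $s\le 1$ and $s>1$, which the proof of Lemma \ref{lem:cutoff} leaves implicit.
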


\subsubsection{Derivation of distributional limit}
\begin{theo}    
    Under $H_0$ we have that
    \begin{align}        
        &\sup_{k\geq N}\sup_{ \substack{1 \leq \ell \leq k/2 \\ k-\ell \geq N}}\frac{\Big\|\min(1,\frac{\ell}{N})\hat G_{\max(N,\ell)}(\cdot)-(\hat G_k(\cdot)-\hat G_{k-\ell}(\cdot))\Big\|_\infty}{\rho_\beta(\ell/N)\sqrt{N}\log^\beta(C_0+k/N)}\\        
        \overset{d}{\to}&\sup_{t \geq 1}\sup_{ \substack{ 0<s\leq t/2 \\ t-s\geq 1}}\frac{\Big\|\min(1,s)K(\max(1,s),\cdot)-(K(t,\cdot)- K(t-s,\cdot))\Big\|_\infty}{\rho_\beta(s)\log^\beta(C_0+t)}  ,  
    \end{align}
    where $K$ is a Kiefer-Müller Process.
\end{theo}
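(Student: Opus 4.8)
The plan is to transcribe the proof of Theorem~\ref{theo:limit} almost line for line, replacing the partial sum process $P_N$ by the sequential empirical process $U_N(\cdot,\cdot)$, the Brownian motion $W$ by a Kiefer--Müller process $K$, the scalar KMT approximation of \cite{kmt:1976} by its two--parameter analogue, and Lemmas~\ref{lem:finite:moc:proofs}, \ref{lem:cutoff} by their empirical counterparts \ref{lem:finite:moc:empirical:proofs}, \ref{lem:cutoff:distributon}. By the probability integral transform we may and do assume $X_i\sim\mathrm{Unif}[0,1]$, so that $\hat G_k(x)-kx=\sum_{i=1}^k(\mathbbm 1\{X_i\le x\}-x)=\sqrt N\,U_N(k/N,x)$; as in the mean--change case the linear drift parts in the numerator of $\widehat\Gamma^{TC}_F$ cancel (since $\min(1,\ell/N)\max(N,\ell)=\ell$), leaving a normalized increment functional of $U_N$.

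First I would invoke a two--parameter Hungarian--type strong approximation for the uniform empirical process (Komlós--Major--Tusnády; see also the discussion of the Kiefer--Müller process in Section~2.12 of \cite{vaart:wellner:1996}): there is a Kiefer--Müller process $K$ on $[0,\infty)\times[0,1]$ with
\[
\sup_{1\le k\le n}\Big\|\big(\hat G_k(\cdot)-k\,(\cdot)\big)-K(k,\cdot)\Big\|_\infty=O\big((\log n)^2\big)\quad\text{a.s.},
\]
where the finite--$n$ versions are patched into a single process via the analogue of Lemma~\ref{lem:construct:brownian}. As in the scalar case, the rescaled process $\tilde K(t,x):=N^{-1/2}K(Nt,x)$ is again a Kiefer--Müller process, because $\mathrm{Cov}(K(Nt,x),K(Ns,y))=N(t\wedge s)(x\wedge y-xy)$.

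Next, fixing an arbitrary $\eta>0$, I would restrict to the growing window $N\le k\le N^{1+\eta}$ (equivalently $1\le t\le N^\eta$) and split the maximum over $\ell$ into the regimes $\ell\le N$ and $\ell>N$, exactly as in the proof of Theorem~\ref{theo:limit}. In each regime the difference between the $\hat G$--statistic and the $K$--statistic is a sum of remainder terms of the form $\|(\hat G_k-k\cdot)-K(k,\cdot)\|_\infty/\big(\rho_\beta(\ell/N)\sqrt N\log^\beta(C_0+k/N)\big)$ and their shifted versions. For moderate and large scales $\ell\ge N^{1/2}$ these are bounded crudely by $O\big((\log N)^2/(N^{1/4}\rho_\beta(\cdot)\log^\beta(\cdot))\big)=o(1)$ via the strong approximation; for the tiny scales $\ell<N^{1/2}$, where this bound is too crude, I would instead control the increments of $U_N$ and of $K$ \emph{separately} through Lemma~\ref{lem:finite:moc:empirical:proofs}, which gives $\|U_N(t,\cdot)-U_N(t-s,\cdot)\|_\infty\le M_{U_N}\sqrt{s(1+\log(t/s)+\log t)}$ (and likewise for $K$). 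Dividing by $\rho_\beta(s)\log^\beta(C_0+t)$ and using $\log(C_0+N/\ell)\ge\log(C_0+N^{1/2})\asymp\log N$ on this regime produces a factor $(\log N)^{1/2-\beta}=o(1)$ since $\beta>1/2$, so the tiny scales contribute $o_\PR(1)$ for both processes. Combining, the $\hat G$--statistic over $1\le t\le N^\eta$ equals the $\tilde K$--statistic over $1\le t\le N^\eta$ up to $o_\PR(1)$, after an additional discrete--to--continuous passage (the verbatim analogue of Lemma~\ref{lem:cont:discrete} with $\|\cdot\|_\infty$ in place of $|\cdot|$, again powered by Lemma~\ref{lem:finite:moc:empirical:proofs}). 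Finally, Lemma~\ref{lem:cutoff:distributon} shows that the tails $t\ge N^\eta$ contribute $o_\PR(1)$ to both the empirical and the Kiefer statistic, and since $\tilde K\overset{d}{=}K$ the claimed convergence follows.

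The main obstacle I anticipate is the first step: securing a clean two--parameter Hungarian approximation of the sequential uniform empirical process by a Kiefer--Müller process that is valid simultaneously over the growing range $k\le N^{1+\eta}$, and checking that its $(\log N)^2$ rate --- genuinely worse than the $O(\log N)$ rate available for partial sums --- is still negligible against the weights at the scales actually retained ($\ell\ge N^{1/2}$), which it is because $N^{1/4}\gg(\log N)^2$. Everything else is a faithful transcription of the proof of Theorem~\ref{theo:limit}.
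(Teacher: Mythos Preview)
Your proposal is correct and follows essentially the same route as the paper: the paper's proof invokes Theorem~4 of \cite{kmt:1975} to obtain independent Brownian bridges with a $\log(k)^2$ approximation, then applies Lemma~\ref{lem:construct:brownian} to assemble them into a Kiefer--Müller process, and from there simply says ``proceed, verbatim, in the same manner as in the proof of Theorem~\ref{theo:limit}.'' You have spelled out exactly what that verbatim transcription entails, including the correct observation that the $(\log N)^2$ rate (in place of $\log N$) is still beaten by the $N^{1/4}$ denominator on the retained scales.
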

\begin{proof}
   Note that by Theorem 4  from \cite{kmt:1975} we may find countably many independent Brownian bridges $(B_i)_{i \in \N}$ such that
    \[
        \Big\|\hat G_k(\cdot)-\sum_{i=1}^kB_i(\cdot)\Big\|_\infty\lesssim \log(k)^2.
    \]
    Using Lemma \ref{lem:construct:brownian} we may therefore find a Kiefer-Müller process $K$ such that
     \[
        \Big\|\hat G_k(\cdot)-K(\cdot)\Big\|_\infty\lesssim \log(k)^2.
    \]
    From here we may proceed, verbatim, in the same manner as in the proof of Theorem \ref{theo:limit} to arrive at the desired conclusion.    
\end{proof}

\subsection{Proofs for TC under dependence}

To establish Theorem \ref{thm:main:1} for dependent data we do not need to change the structure of the proof. We only require 
\begin{itemize}
    \item applicability of Lemma \ref{lem:finite:moc:proofs} under our dependence assumption
    \item strong approximations that converge at least at a polynomial rate
\end{itemize}
We will spell out the details further below. To obtain these two tools we observe that
\[
    \|P_N(t)-P_N(s)\|_{\Psi_2}\lesssim K\sqrt{|t-s|}
\]
follows immediately from Theorem 2.8 in \cite{koehne:mies:2025}, yielding applicability of Lemma \ref{lem:finite:moc:proofs}. Let $W$ be a Brownian motion with variance given by the long-run-variance $\sigma_{LR}^2:=\sum_{h \in Z}\E[\varepsilon_0\varepsilon_{h}]$ of $\varepsilon_i$. A strong approximation of the form
\[
    |W(j)-\sum_{i=1}^j\varepsilon_i|\lesssim j^{1/3}
\]
  follows from Theorem 3 in \cite{wu:2005} by the fact that for any $p\geq 1$ (with $\delta_p$ defined in said paper)
\[
    \delta_p(h)\lesssim_p\theta_{\Psi_2}(h).
\]
\begin{theo}
\label{theo:limit:dep}
    Assume that $(\varepsilon_i)_{i \in \N}$ satisfy \eqref{e:dep:decay} as well as $\|\varepsilon_i\|_{\Psi_2}\leq K$. Let $B$ 
    be a Brownian motion with variance $\sigma_{LR}^2$. When $H_0$ holds it follows that
    \begin{align}        
        &\sup_{k\geq N}\sup_{ \substack{1 \leq \ell \leq k/2 \\ k-\ell \geq N}}\frac{\Big|\min(1,\frac{\ell}{N})S_{\max(N,\ell)}-(S_k-S_{k-\ell})\Big|}{\rho_\beta(\ell/N)\sqrt{N}\log^\beta(C_0+k/N)}\\        
        \overset{d}{\to}&\sup_{t \geq 1}\sup_{ \substack{ 0<s\leq t/2 \\ t-s\geq 1}}\frac{\Big|\min(1,s)B(\max(1,s))-(B(t)- B(t-s))\Big|}{\rho_\beta(s)\log^\beta(C_0+t)} .   
    \end{align}
\end{theo}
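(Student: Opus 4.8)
The plan is to re-run the proof of Theorem~\ref{theo:limit} essentially verbatim, feeding in the two dependence-adapted inputs recorded above. Lemma~\ref{lem:finite:moc:proofs} remains applicable, since $\sup_{N}\sup_{s,t\ge 0}\|P_N(s)-P_N(t)\|_{\Psi_2}/\sqrt{|t-s|}\le K$ continues to hold under \eqref{e:dep:decay} by Theorem~2.8 in \cite{koehne:mies:2025}; hence Lemmas~\ref{lem:cutoff} and \ref{lem:cont:discrete} carry over without change. The KMT bound $|S_j-W(j)|\lesssim\log j$ is replaced by the strong approximation $|S_j-W(j)|\lesssim j^{1/3}$ recorded above (Theorem~3 in \cite{wu:2005}), where now $W$ is a Brownian motion with variance $\sigma_{LR}^2=\sum_{h\in\mathbb{Z}}\E[\varepsilon_0\varepsilon_h]$; its rescaling $\tilde W(t):=N^{-1/2}W(Nt)$ is again a Brownian motion of variance $\sigma_{LR}^2$, which is exactly the limit object.

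First I would fix $\eta\in(0,1/2)$. In Theorem~\ref{theo:limit} any $\eta>0$ works because the approximation error is logarithmic; here the polynomial error $k^{1/3}$ forces the restriction $\eta<1/2$, but a single admissible choice is all that is needed, as the limit $\sup_{t\ge 1}\sup_{0<s\le t/2,\,t-s\ge 1}(\cdots)$ does not depend on $\eta$. With $\eta$ fixed, Lemma~\ref{lem:cutoff} truncates $\sup_{k\ge N}$ to the range $N\le k\le N^{1+\eta}$ (equivalently $1\le t\le N^\eta$), and on that range I run the cases $\ell\le N$ and $\ell>N$ exactly as in the proof of Theorem~\ref{theo:limit}, comparing $S$-increments to $W$-increments. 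The only genuine change is in the error terms $R_1,\dots,R_4$: wherever the original proof bounds an approximation error by $\log(k)\asymp\log N$, it now becomes at most $k^{1/3}\le N^{(1+\eta)/3}$, and one must check that the surrounding scaling still dominates this polynomially growing factor.

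Concretely, in the split of $R_2(k,\ell)$ over $\ell\le N$ I would replace the threshold $N^{1/2}$ by $N^{a}$ for some $a\in\bigl(2(1+\eta)/3,\,1\bigr)$, which is nonempty precisely because $\eta<1/2$. For $\ell>N^{a}$ the strong approximation gives $R_2(k,\ell)\lesssim k^{1/3}/(\sqrt{\ell}\,\log^\beta(C_0+k/N))\lesssim N^{(1+\eta)/3-a/2}=o(1)$. For $\ell\le N^{a}$ I invoke Lemma~\ref{lem:finite:moc:proofs} and its Brownian counterpart (Theorem~1 in \cite{chevallier2023}, applied to $W$ — the variance $\sigma_{LR}^2$ rather than $\sigma^2$ only changing constants) to bound both increments by $\lesssim(M_{P_N}+M_W)\sqrt{\ell\,(1+\log(k/\ell)+\log(C_0+k/N))}$, giving $R_2(k,\ell)\lesssim(M_{P_N}+M_W)(\log N)^{1/2}/\log^\beta(C_0+N^{1-a})\asymp(\log N)^{1/2-\beta}=o_\PR(1)$ since $\beta>1/2$. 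The terms $R_1,R_3,R_4$ are handled the same way: $R_1\lesssim N^{-1/6}$, and $R_3,R_4\lesssim N^{(1+\eta)/3-1/2}=o(1)$ using $\eta<1/2$ together with $\sqrt{\ell}\ge\sqrt{N}$ in that regime. After this, the concluding steps of Theorem~\ref{theo:limit} — passage to continuous time via Lemma~\ref{lem:cont:discrete}, replacing $\tilde W$ by an equidistributed Brownian motion $B$ of variance $\sigma_{LR}^2$, and re-extending the supremum to all $t\ge 1$ via Lemma~\ref{lem:cutoff} — transfer verbatim and yield the claim.

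The main obstacle is precisely this trade-off: Wu's coupling is only polynomial, not logarithmic, so the shortest windows admitted by the (almost maximally weighted) multiscale statistic are no longer controlled by the strong approximation alone, and the naive argument of Theorem~\ref{theo:limit} breaks there. The resolution is that the modulus-of-continuity bound of Lemma~\ref{lem:finite:moc:proofs} — still valid for physically dependent data by \cite{koehne:mies:2025} — is strong enough to absorb all scales $\ell\le N^{a}$ uniformly, so that the strong approximation need only be used on windows $\ell\gtrsim N^{2(1+\eta)/3}$ that are long enough for its polynomial error to be negligible. The cutoff of late times and the discrete-to-continuous passage are entirely insensitive to the switch from independence to physical dependence.
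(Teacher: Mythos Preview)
Your proposal is correct and follows essentially the same route as the paper's proof. The only cosmetic difference is in parameter choices: the paper fixes $0<\eta<1/5$ and uses the threshold $N^{4/5}$ when splitting $R_2$, whereas you allow any $\eta\in(0,1/2)$ together with a threshold $N^{a}$ for $a\in(2(1+\eta)/3,1)$; both choices accomplish the same trade-off between the polynomial coupling error and the modulus-of-continuity bound, and neither is needed beyond a single admissible value.
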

\begin{proof}
    By the above discussion we can find a Brownian motion $W$ with variance $\sigma_L^2$ such that
    \[
        \Big|\sum_{i=1}^j\varepsilon_i-W(j)\Big|\lesssim j^{1/3}.
    \]    
    \textbf{Case $\ell \leq N$:}    
    We begin by decomposing
    \begin{align*}
        &\Bigg|\frac{|\frac{\ell}{N}S_N-(S_k-S_{k-\ell})|}{\rho_\beta(\ell/N)\sqrt{N}\log^\beta(C_0+k/N)}-\frac{|\frac{\ell}{N}W(N)-(W(k)-W(k-\ell))|}{\rho_\beta(\ell/N)\sqrt{N}\log^\beta(C_0+k/N)}\Bigg|\\
        \leq &\frac{|\frac{\ell}{N}(S_N-W(N))|}{\rho_\beta(\ell/N)\sqrt{N}\log^\beta(C_0+k/N)}+\frac{|(S_k-S_{k-\ell})-((W(k)-W(k-\ell)))|}{\rho_\beta(\ell/N)\sqrt{N}\log^\beta(C_0+k/N)}\\
        := &R_1(k,\ell)+R_2(k,\ell).
    \end{align*}
    $R_1$ vanishes by the same arguments as in the independent case.

    In the following we let $0<\eta<1/5$ be an arbitrary but fixed constant. For $R_2$ we estimate
    \begin{align*}
       & \sup_{N \leq k \leq N^{1+\eta}}\sup_{1 \leq \ell \leq N}R_2(k,\ell)\\
        \lesssim  &\sup_{N \leq k \leq N^{1+\eta}}\sup_{1 \leq \ell \leq N^{4/5}}R_{2}(k,\ell)+ \sup_{N \leq k \leq N^{1+\eta}}\sup_{N^{4/5} \leq \ell \leq N}\frac{k^{1/3}}{N^{2/5}\log^\beta(C_0+k/N)}\\
        =&\sup_{N \leq k \leq N^{1+\eta}}\sup_{1 \leq \ell \leq N^{4/5}}R_{2}(k,\ell)+o(1)
    \end{align*}
    and then use Theorem 1 from \cite{chevallier2023} and Lemma \ref{lem:finite:moc:proofs} to obtain
    \begin{align*}
        &\sup_{N \leq k \leq N^{1+\eta}}\sup_{1 \leq \ell \leq N^{4/5}}R_2(k,\ell)\\
        \lesssim & (M_{P_N}+M_W)\sup_{N \leq k \leq N^{1+\eta}}\sup_{1 \leq \ell \leq N^{4/5}}\frac{\sqrt{\ell/N(1+\log(k/\ell)+\log(C_0+k/N)}}{\sqrt{\ell/N}\log^\beta(C_0+k/N)\log^\beta(C_0+N/\ell)}\\
        \lesssim &(M_{P_N}+M_W)\frac{\sqrt{\log(N)}}{\log^\beta(C_0+N^{1/5})}=o_\PR(1)~.
    \end{align*}
    Combining the previous inequalities we hence have
    \[
        \sup_{N \leq k \leq N^{1+\eta}}\sup_{1 \leq \ell \leq N}R_1(k,\ell)+R_2(k,\ell)=o_\PR(1),
    \]
    so that, using Lemma \ref{lem:cont:discrete}  to switch between the discretized and continuous versions,
    \begin{align*}
           &\sup_{1 \leq t \leq N^{\eta}}\sup_{\substack{0<s\leq 1\\ t-s\geq 1}}\frac{\Big|sP_N(1)-(P_N(t)-P_N(t-s))\Big|}{\rho_\beta(s)\log^\beta(C_0+t)}\\       
        =&\sup_{1 \leq t \leq N^{\eta}}\sup_{\substack{0<s\leq 1\\ t-s\geq 1}}\frac{\Big|s\tilde W(1)-(\tilde W(t)-\tilde W(t-s))\Big|}{\rho_\beta(s)\log^\beta(C_0+t)}+o_\PR(1),  
    \end{align*}     
    where $\tilde W(t)=N^{-1/2}W(Nt)$.  \\
    \textbf{Case: $\ell >N$:}
    Proceeding as in the previous case we need to bound
    \begin{align*}
        R_3(k,\ell)&=\frac{|S_\ell-W(\ell)|}{\sqrt{\ell}\log^\beta(C_0+k/N)\log^\beta(C_0+\ell/N)},\\
        R_4(k,\ell)&=\frac{|(S_k-S_{k-\ell})-((W(k)-W(k-\ell))|}{\sqrt{\ell}\log^\beta(C_0+k/N)\log^\beta(C_0+\ell/N)}
    \end{align*}
    uniformly over $N \leq k \leq N^{1+\eta}, N<\ell\leq k/2$. These quantities vanish by upper bounding the numerator by $k^{1/3}=o(N^{2/5})$ (recall $0<\eta<1/5$),  yielding
    \begin{align*}
         &\sup_{1 \leq t \leq N^{\eta}}\sup_{\substack{1<s\leq t/2\\ t-s\geq 1}}\frac{\Big|P_N(s)-(P_N(t)-P_N(t-s))\Big|}{\rho_\beta(s)\log^\beta(C_0+t)}\\       
        =&\sup_{1 \leq t \leq N^{\eta}}\sup_{\substack{1<s\leq t/2\\ t-s\geq 1}}\frac{\Big|\tilde W(s)-(\tilde W(t)-\tilde W(t-s))\Big|}{\rho_\beta(s)\log^\beta(C_0+t)}+o(1).   
    \end{align*}
   From here we may proceed verbatim as in the independent case.

\end{proof}

\subsection{Proofs for Selfnormalisation}

\begin{proof}
    From the proof of  Theorem \ref{theo:limit:dep} we have that there exists a Brownian Motion $B$ with variance $\sigma^2_{LR}$ and an $0<\eta<1/5$  such that
    \begin{align}
          &\sup_{k\geq N}\sup_{ \substack{1 \leq \ell \leq k/2 \\ k-\ell \geq N}}\frac{\Big|\min(1,\frac{\ell}{N})S_{\max(N,\ell)}-(S_k-S_{k-\ell})\Big|}{\rho_\beta(\ell/N)\sqrt{N}\log^\beta(C_0+k/N)}\\
        =&\sup_{1 \leq t \leq N^{\eta}}\sup_{ \substack{0<s \leq t/2 \\ t-s \geq 1}}\frac{\Big|\min(1,s)B(\max(1,s)-(B(t)-B(t-s))\Big|}{\rho_\beta(s)\log^\beta(C_0+t)}+o_\PR(1) \label{e:brownian:approx:dep}.
    \end{align}
    The same strong approximation used to obtain these results also immediately yields 
    \begin{align*}
        V_N=  V_B+o(1)~,
    \end{align*}
    where 
    \[
        V_B=\int_0^1|B(x)-xB(1)|dx~.
    \]
    Clearly we have that
    \[
        V_B\overset{d}{=}\sigma_{LR}V
    \]
    and that $B_0=B/\sigma_{LR}$ is a standard Brownian motion,  so that the desired conclusion immediately follows by the continuous mapping theorem if we can show that the Brownian supremum in \eqref{e:brownian:approx:dep} and $V_B$ are independent. To that end we note that $V_B$ is an integral over the Brownian bridge corresponding to $(B(t))_{t \in [0,1]}$, which is independent of the process $(B(t))_{t \geq 1}$ by a straightforward covariance calculation. The result follows. 
\end{proof}

\subsection{Additional Lemmas}

\begin{lem}
\label{lem:construct:brownian}
    Let $(\varepsilon_i)_{i \in N}$ be mean zero normally distributed random elements on $C[K]:=\{ f:K\to \R\ | \text{ f continuous}\}$ for some compact $K \subset [0,1]$ such that 
    \begin{align}
      \label{ass:mom}     
          \E[(\varepsilon_i(x)-\varepsilon_i(y))^8]\lesssim |x-y|^4~.
    \end{align}  
    Then there exists a $C[K]$ valued Brownian motion (see \cite{Kuelbs:1975} for a definition) $(B(x))_{x \geq 0}$ with $\E[(B(1))(t)(B(1))(s)]=\E[\epsilon_1(t)\epsilon_1(s)]$ such that for all $n \in \N$
    \[
    B(n)=\sum_{i=1}^n\varepsilon_i~.
    \]    
\end{lem}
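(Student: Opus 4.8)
The idea is a ``fill in the gaps'' construction. Write $S_n:=\sum_{i=1}^n\varepsilon_i$ with $S_0:=0$; these partial sums are to become the values $B(n)$, and I interpolate over each unit interval $[n,n+1]$ by an independent, suitably scaled bridge. I use throughout that the $\varepsilon_i$ are i.i.d.\ (this is how the lemma is applied, and is anyway forced by the asserted conclusion), and I abbreviate $\Gamma(t,s):=\mathbb{E}[\varepsilon_1(t)\varepsilon_1(s)]$ for $t,s\in K$.

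\textbf{Step 1 (the interpolation field).} I would first produce a centered Gaussian random field $\{R(x,t):x\in[0,1],\,t\in K\}$ with $\mathbb{E}[R(x,t)R(y,s)]=(x\wedge y-xy)\,\Gamma(t,s)$. Its kernel is nonnegative definite, being the product of the Brownian-bridge kernel with $\Gamma$, so the field exists by Kolmogorov's extension theorem. For jointly continuous paths I propagate \eqref{ass:mom}: the increments of $R$ are centered Gaussian with $\mathbb{E}[(R(x,t)-R(y,t))^2]\le|x-y|\,\Gamma(t,t)$ and $\mathbb{E}[(R(x,t)-R(x,s))^2]\le\mathbb{E}[(\varepsilon_1(t)-\varepsilon_1(s))^2]$, and since Jensen applied to \eqref{ass:mom} gives $\mathbb{E}[(\varepsilon_1(t)-\varepsilon_1(s))^2]\lesssim|t-s|$, one obtains
\[
\mathbb{E}\big[(R(x,t)-R(y,s))^8\big]\ \lesssim\ \big(|x-y|+|t-s|\big)^4
\]
uniformly on $[0,1]\times K$. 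The two-parameter Kolmogorov continuity criterion then yields a jointly continuous modification -- equivalently, a $C[K]$-valued continuous bridge in the sense of \cite{Kuelbs:1975} -- with $R(0,\cdot)\equiv R(1,\cdot)\equiv0$ a.s.

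\textbf{Step 2 (assembly and verification).} On an enlarged probability space carrying $(\varepsilon_i)_{i\ge1}$ and, independent of it, an i.i.d.\ sequence $(R_n)_{n\ge0}$ of copies of $R$, I set, for $n\ge0$, $x\in[0,1]$, $t\in K$,
\[
B(n+x)(t)\ :=\ S_n(t)+x\,\varepsilon_{n+1}(t)+R_n(x)(t).
\]
Because $R_n(0)=R_n(1)=0$, the definition is consistent at the integers and yields $B(n)=S_n=\sum_{i=1}^n\varepsilon_i$; each $B(x)$ lies in $C[K]$, and $x\mapsto B(x)$ is continuous into $(C[K],\|\cdot\|_\infty)$ since on $[n,n+1]$ it equals $S_n+x\varepsilon_{n+1}+R_n(x)$ with $R_n$ jointly continuous. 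The family $\{B(x)(t)\}$ is jointly Gaussian, being a linear image of the jointly Gaussian family $\{\varepsilon_i(t)\}\cup\{R_n(x)(t)\}$ (the two parts independent, each jointly Gaussian). It then remains to compute the covariance: writing $y=m+y'$, $x=n+x'$ with $m\le n$ and $y',x'\in[0,1]$ (and $y'\le x'$ when $m=n$), a case split on $m=n$ versus $m<n$, using $S_i\perp\varepsilon_j$ for $j>i$, $(\varepsilon_i)\perp(R_n)$, $R_m\perp R_n$ for $m\ne n$, and the defining covariance of $R$, collapses the sum to $(n+x'\wedge y')\,\Gamma(t,s)$ respectively $(m+y')\,\Gamma(t,s)$, both equal to $(x\wedge y)\,\Gamma(t,s)$. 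A centered continuous Gaussian process with covariance $(x\wedge y)\Gamma(t,s)$ and $B(0)=0$ has uncorrelated -- hence independent -- increments over disjoint intervals, so $B$ is a $C[K]$-valued Brownian motion with $\mathbb{E}[B(1)(t)B(1)(s)]=\Gamma(t,s)=\mathbb{E}[\varepsilon_1(t)\varepsilon_1(s)]$ and $B(n)=\sum_{i=1}^n\varepsilon_i$, as claimed.

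The main obstacle is Step 1: exhibiting the bridge field $R$ with jointly continuous sample paths. Once \eqref{ass:mom} has been transferred to eighth moments of the increments of $R$ in both coordinates, this is a routine application of Kolmogorov's continuity criterion, and Step 2 is then bookkeeping together with the elementary covariance computation.
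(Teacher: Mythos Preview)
Your proof is correct. Both you and the paper ``fill in the gaps'' between the prescribed integer values $B(n)=S_n$, but by different classical routes. The paper uses a L\'evy-type dyadic refinement: it sets $B(t+1/2)=\tfrac12(B(t)+B(t+1))+Y_{0,t}$ with $Y_{0,t}\overset{d}{=}\tfrac12\varepsilon_1$ independent, then iterates to all dyadics, and finally invokes a Banach-space-valued Kolmogorov--Chentsov theorem after bounding $\mathbb{E}\|B(t)(\cdot)-B(s)(\cdot)\|_\infty^4\lesssim(t-s)^2$ via Theorem~2.2.4 of van der Vaart--Wellner. You instead build, once and for all, a $C[K]$-valued Brownian bridge $R$ as a two-parameter Gaussian field with tensor-product covariance $(x\wedge y-xy)\Gamma(t,s)$, obtain its joint continuity from the two-parameter Kolmogorov criterion (your transfer of \eqref{ass:mom} to eighth moments of $R$ is correct), and then paste $S_n+x\varepsilon_{n+1}+R_n(x)$ over each unit interval. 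Your approach is a one-shot construction on each interval and makes the covariance verification completely explicit; the paper's dyadic approach is slightly more elementary in that it only ever samples independent scaled copies of $\varepsilon_1$ and never needs Kolmogorov's extension theorem for the bridge, but it requires the additional limiting step from dyadics to the continuum. Both arguments use \eqref{ass:mom} in the same way, namely to feed a moment continuity criterion.
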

\begin{proof}
    We will iteratively construct the stochastic process on the dyadic grid of $[0,\infty)$. By a routine application of the Kolmogorov-Chentsov theorem we may then extend the process to $[0,\infty)$.\\
    
    Given the random variables $(\varepsilon_i)_{i \in \N}$ we proceed as follows. For $t \in \N$ we let $B(t)=\sum_{i=1}^t\varepsilon_i$.  In a first step we define
    \[
        B(t+1/2)=\frac{B(t)+B(t+1)}{2}+Y_{0,t}
    \]
    where the $Y_{0,t}$ have the same distribution as $1/2 \varepsilon_1$ but are independent of one another and of $(\varepsilon_i)_{i \in \N}$. In the next step we then define
    \[
        B(t+1/4)=\frac{B(t)+B(t+1/2)}{2}+Y_{1,t}, \quad t \in N_1:=(\N+1/2)
    \]
    where $Y_{1,t}$ have the same distribution as $1/\sqrt{8} \varepsilon_1$ and are jointly independent of all random variables we used so far in this construction. Continuing like this we have defined $B(t)$ for all dyadics. By the moment assumption \eqref{ass:mom} and Theorem 2.2.4 from \cite{vaart:wellner:1996} we obtain that
    \[
        \E[\|B(t)(\cdot)-B(s)(\cdot)\|_\infty^4]\lesssim (t-s)^2,
    \]
    which allows us to apply a version of the Kolmogorov-Chentsov theorem (e.g. from \cite{kraetschmer:urusov:2023}) to obtain (locally uniform) continuity on the dyadic grid on $[0,\infty)$. We may then extend the process (pathwise) to $[0,\infty)$ due to the locally uniform continuity. It is a routine calculation to verify that the resulting Gaussian process has the correct covariance structure.
\end{proof}

\subsection{Additional literature review} \label{sec:app:lit}

In Section \ref{sec:lit}, we have composed a literature overview for sequential change point testing, focusing on work that is most comparable to ours and can be applied (possibly after minor modifications) for testing changes in the mean parameter of a scalar time series. Here, we want to give a short outlook on recent developments  in sequential change point detection, for more general classes of problems. The field is highly dynamic and we can only point to some overall trends. \\
Besides the mean, changes in various other parameters have been investigated. A flexible class of detectors is given by sequential U-statistics, as investigated by \cite{kirch:stoehr:2021}. Here, not only consistency has been studied, but also the asymptotic distribution of delay times \cite{kirch:stoehr:2022}. Importantly, U-statistics include Wilcoxon type statistics, that can be used for robust sequential inference. Change point detection used for monitoring entire distribution functions is  also more robust against outliers and examples include \cite{kojadinovic:verdier:2020} and \cite{horvath:kokoszka:wang:2021}. 
Monitoring methods play an increasing role in economic and finance application, where they can be used, e.g., to detect volatility changes  or the emergence of bubbles. For some work in these directions, we point to \cite{horvath:liu:rice:wang:2020,horvath:liu:lu:2022,horvath:lazar:liu:wang:xue:2025}. Another method in this vein was developed by \cite{otto:breitung:2023}. The approach is related to  the Full CUSUM statistic discussed in Section \ref{sec:lit}, but focused on detecting changes in the slope parameter of a linear regression model. That paper also employs monitoring to detect changes in COVID data, with the important difference to our work that it uses case counts rather PCR values. 
A recent trend in sequential change point testing is the extension to complex data structures. Examples include functional data, with functional linear regression models studied in \cite{aue:hormann:horvath:huskova:2014} and changes in the mean function by \cite{kutta:kokoszka:2025}.
Monitoring for high dimensional data has also been studied, namely by \cite{gosmann:stoehr:heiny:dette:2022} for the high-dimensional mean and by \cite{doernemann:kokoszka:kutta:lee:2025} for high dimensional matrices. Besides these, there exist various works on high dimensional change point monitoring, though typically outside the testing paradigm of \cite{chu:stinchcombe:white:1996}. We refer to the work  \cite{chen:wang:samworth:2024} and \cite{yu:madridpadilla:wang:rinaldo:2024} for examples of high-dimensional monitoring without a training period.

\putbib
\end{bibunit}

\end{document}